\newcommand{\Eqref}[1]{(\ref{#1})}
\newcommand{\N}{\mathbb{N}}
\newcommand{\R}{\mathbb{R}}
\newcommand{\C}{\mathbb{C}}
\newtheorem{defn}{Definition}
\newtheorem{prop}[defn]{Proposition}
\newtheorem{thm}[defn]{Theorem}
\newtheorem{obs}[defn]{Remark}
\newtheorem{lema}[defn]{Lemma}
\newtheorem{coro}[defn]{Corollary}
\theoremstyle{definition}
\newtheorem{ej}{Example}
\newtheorem*{dem}{Proof}
\newtheorem*{sol}{Solution}
\theoremstyle{definition} 
\newtheorem{remark}{Remark}
\title{\textbf{On the stability of IMEX BDF methods  for DDEs and PDDEs} }
\author{Ana Tercero-B\'aez$^{(a)}$
        Jesús Martín-Vaquero$^{(b)}$
\\
           $^{(a)}$   Department of Applied Mathematics, University of Salamanca, Salamanca, Spain\\
              email: id00781180@usal.es
\\
            $^{(b)}$  Department of Applied Mathematics, IUFFyM,University of Salamanca, Salamanca, Spain\\
              email: jesmarva@usal.es
}
\date{}
\begin{document}

\maketitle

\begin{abstract}
In this paper, the stability of IMEX-BDF methods for delay differential equations (DDEs) is studied based on the test equation $y'(t)=-A y(t) + B y(t-\tau)$, where $\tau$ is a constant delay, $A$ is a positive definite matrix, but $B$ might be any matrix. First, it is analyzed the case where both matrices diagonalize simultaneously, but the paper focus in the case where the matrices $A$ and $B$ are not simultaneosly diagonalizable.  The concept of field of values is used to prove a sufficient condition for unconditional stability of these methods and another condition which also guarantees their stability, but according to the step size.
 Several numerical examples in which the theory discussed here is applied to DDEs, but also parabolic problems given by partial delay differential equations with a diffusion term and a delayed term are presented.

{\bf Keywords:}   Numerical stability;  IMEX BDF methods; Delay differential equations;  Unconditional stability;  Partial delay differential equations;  Field of values

\end{abstract}

\section{Introduction}
\label{intro}

 Many real-life phenomena require the solution of differential equations that depend partially on the past, not only the current state.
DDEs find applications in various fields, including biology, physics, chemistry, economics, and engineering. They are particularly useful for modeling systems with memory effects or systems in which time delays play a crucial role.
Examples appear in population dynamics, infectious diseases (accounting for the incubation periods), chemical and enzyme kinetics and more general control problems (see, for example, \cite{BoRi00,macdonald2008biological,MaGl77,TaMaBe00}).

 These problems are modellized by  many different types of differential equations including delay differential equations (DDEs), neutral delay differential equations (NDDEs) \cite{Jackiewicz1987/88}, integro-differential \cite{Baker06}, partial delay differential equations (PDDEs)  \cite{KotoRK,Koto-Stability},
or stochastic delay differential equations (SDDEs) \cite{baker_buckwar_2000,KUCHLER2000189}.
Solving DDEs analytically can be challenging due to the presence of time delays. Numerical methods, such as the method of steps, collocation methods, and numerical continuation, are commonly employed to approximate solutions to DDEs.

In the scientific literature, to study the stability of different numerical methods for delay differential equations, it has usually been followed the classical theory based on studying the behavior of the method when applied to the differential equation with constant delay
\begin{equation}
   \label{eq:1:EDR_EstabilidadTipica}
   y'(t) = \lambda y(t) + \gamma y(t-\tau),
\end{equation}
where $\lambda, \gamma \in \C$ (see \cite{Calvo,Stability-theta-methods-DDE,KotoRK,Koto-Stability,rihan-DDE}). However, this technique is useful to analyze the stability of numerical methods for systems of DDEs:
\begin{equation}
    y'(t)=-A y(t) + B y(t-\tau),
    \label{eq:1:EDR_EstabilidadMétodo}
\end{equation}
only when $A$ and $B$ diagonalize simultaneously.

For this reason, in this paper (Sect. \ref{stab-general-case}) we will follow a newer theory that was used to study the stability of some numerical methods for problems without delay (see \cite{MV-Pagano,LMM-Seib-Ros,LMM-Seib}), and only recently to know the behavior of the $\theta$-methods  when applied to differential equations with a constant delay, see \cite{Art-Alejandro}.

\subsection{Outline of the paper}

The outline of the paper is as follows: in \S \ref{BDF-meth} IMEX BDF methods are introduced and consistency of these methods are analyzed.  In \S \ref{Sección: Est IMEX escalar} the linear stability in a scalar DDE is studied in detail.  In this way, the linear stability of the IMEX BDF methods when both matrices diagonalize simultaneously can be easily explained in \S \ref{stab-simult-diag}.  Later, in \S \ref{stab-general-case}, the concept of field of values and its properties are provided. With this new definition, some theoretical results are given to obtain unconditional and conditional stability, and some tests with systems of DDEs are provided to better explain the results obtained.  Finally, in \S \ref{Numer-simulations}, some numerical simulations in PDDEs give a good idea of the power of the developed theory.

\section{IMEX BDF methods}\label{BDF-meth}

In general, let us consider a differential equation with delay (DDE) of the form:
\begin{equation}\label{DDE eq LMM}
    \left\{
            \begin{aligned}
              & y'(t)=f(t,y(t))+g(t,y(t),y(t-\tau)) & t\geq t_{0}\\
              & y(t) = \phi(t) & t\leq t_{0}.
            \end{aligned}
    \right.
\end{equation}

A  $s-$step linear multistep implicit-explicit (IMEX)  method (with $s\geq 1$) for solving \Eqref{DDE eq LMM} is written as:
 \begin{equation}\label{original IMEX-LMM}
        \sum_{j=0}^{s} \alpha_{j} y_{n+j}=h\left(\sum_{j=0}^{s}\beta_{j}f(t_{n+j},y_{n+j})+\sum_{j=0}^{s-1}\beta_{j}^{*} g(t_{n+j},y_{n+j}, y_{n+j-m})\right),
    \end{equation}
    $h$ being the \textit{step size} that verifies the constraint $h=\frac{\tau}{m-u}$, with $m\in\N$ and $0\leq u\leq 1$; and $\{\alpha _{j},\ \beta _{j}, \ \beta _{j}^{*}\}_{0\leq j \leq s}$ the corresponding coefficients of the method.

 Both $\alpha_{j}$ and $\beta_{j}$ will be defined by an implicit method used, since $f(t,y(t)) $ is usually a ``stiff'' term and therefore good stability properties are desired. However, it must be taken into account that the $\beta_{j}^{*}$ of the second term are defined as $\beta_{j}^{*}=\beta_{j}+\beta_{ s} \sigma_{j},$ with $0\leq j \leq s-1$, where $\sigma_{j}$ are unknown. To obtain them, let us consider the following condition extracted from \cite{Koto-Stability}, which will allow us to determine all the coefficients of the implicit-explicit multistep method:
  \begin{equation}\label{Fórmula_extrapolación_explícita}
    \sum_{j=0}^{s-1} j^{q}\cdot \sigma_{j}=k^{q}, \quad q=0,1,...,p-1,
\end{equation}
  $p$ being the desired order of the method.  Thus, we easily observe that assuming that $s=p$ in \Eqref{Fórmula_extrapolación_explícita}, the coefficients $ \sigma_{j}$ are defined univocally.

Concrete examples of this type of multistep methods are IMEX BDF2 and IMEX BDF3.  Let us impose from here on for simplicity that the parameter $u$ defined within the step size $h=\frac{\tau}{m-u}$ is null ($u=0$). The construction of these methods is developed as follows (see \cite{Koto-Stability}, for example).

Regarding the classical BDFs (BDF2 and BDF3), we will follow the procedure set out in \cite{solving-ODE-I}[\S III], developed for ordinary differential equations. First, we will calculate the coefficients using the formula:
\begin{equation*}
    \sum_{j=1}^{s}\frac{1}{j}\triangledown^{j}y_{n+1}=h f_{n+1}, \quad \text{with} \quad \triangledown^{j}y_{i}=\triangledown^{j-1}y_{i}-\triangledown^{j-1}y_{i-1}, \ j \in \N,
\end{equation*}
where $\triangledown^{0}y_{i}=y_{i}$.  As for the delayed part, we will use \Eqref{Fórmula_extrapolación_explícita}.

With all the above mentioned, the IMEX-BDF2 and IMEX-BDF3 methods applied to the general DDE \Eqref{DDE eq LMM} will have, respectively, the form
\begin{equation}\label{IMEX-BDF2 general}
    \frac{3}{2}y_{n+1}-2y_{n}+\frac{1}{2}y_{n-1}=h( f_{n+1}+2g_{n-m}-g_{n-1-m}),
\end{equation}
\begin{equation}\label{IMEX-BDF3 general}
    \frac{11}{6}y_{n+1}-3y_{n}+\frac{3}{2}y_{n-1}-\frac{1}{3}y_{n-2}=h(f_{n+1}+3g_{n-m}-3g_{n-1-m}+g_{n-2-m}).
\end{equation}

\vspace{0.1cm}
It is possible to derive higher-order IMEX BDF methods in a similar way. Actually, it is well-known that implicit BDF methods have good absolute stability properties up to sixth-order.

\subsection{  Consistency of the proposed methods  }\label{Consistency}

Convergence is normally considered as a sum of stability plus consistency. We will study in more detail in the following sections the notion of stability, applied to scalar linear equations or linear systems of equations. Turning now to the second concept, we have the following definitions (see \cite{ConsistencLMM} and references therein):
\begin{defn}\label{IMEX consistency definition}
We will state that the implicit-explicit linear multistep method \Eqref{original IMEX-LMM} is consistent if the local truncation error, defined as
$$(T_{h})_{n}=\frac{1}{h}\sum_{j=0}^{s} \alpha_{j} y(t_{n+j})-\left(\sum_{j=0}^{s}\beta_{j}f(t_{n+j},y(t_{n+j}))+\sum_{j=0}^{s-1}\beta_{j}^{*} g(t_{n+j},y(t_{n+j}), y(t_{n+j-m}))\right)$$
where $y(t_{n+i})$ represents the exact solution of the DDE \Eqref{DDE eq LMM} at time $t_{n+i}$, verifies
$$\|T_{h}\|_{\infty} \longrightarrow 0, \quad \mbox{when} \quad h\rightarrow 0.$$
\end{defn}
\begin{defn}\label{Def IMEX order}
The linear multistep method \Eqref{original IMEX-LMM} is of order $p$ if
$$\|T_{h}\|_{\infty}=O(h^{p}), \quad \mbox{when} \quad h\rightarrow 0.$$
\end{defn}

Using Taylor series in both equations in differences \Eqref{IMEX-BDF2 general} and \Eqref{IMEX-BDF3 general}, it is possible to obtain the following results:
\begin{thm}
The local truncation error (LTE) of the IMEX BDF2 method is of the form
\begin{equation*}
\begin{aligned}
LTE_{BDF2}= c_{2}h^{2}+O(h)^{3}
\end{aligned}
\end{equation*}
where $c_{2}$ is a function that depends on $f(\cdot)$, $g(\cdot)$ and some of its derivatives. A file named "1. IMEX BDF2 theory" located in folder \textit{"Codes article"} at  \href{https://github.com/anatb3/Article-Documentation.git}{https://github.com/anatb3/Article-Documentation.git} contains the Mathematica codes used to obtain the full expression. With this, it can be stated that the IMEX BDF2 is a second-order method.
\end{thm}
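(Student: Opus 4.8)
The plan is to verify the order-two claim by a direct Taylor expansion of the difference equation \Eqref{IMEX-BDF2 general} around a common base point. First I would fix a node, say $t_{n-1-m}$ (or, more symmetrically, $t_n$), and write every quantity appearing in the scheme — the three solution values $y(t_{n+1}), y(t_n), y(t_{n-1})$ on the left, the implicit term $f(t_{n+1},y(t_{n+1})) = y'(t_{n+1})$ coming from the ODE part, and the two extrapolated delayed terms $g_{n-m}=g(t_{n-m},y(t_{n-m}),y(t_{n-2m})) = $ (evaluated along the exact solution) and $g_{n-1-m}$ — as Taylor series in $h$ about that base point, using $h=\tau/m$ so that the delayed arguments shift cleanly by integer multiples of $h$. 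Substituting $y'(t)=f+g$ along the exact solution keeps the bookkeeping consistent, but for the pure LTE estimate it suffices to treat $F(t):=f(t,y(t))+g(t,y(t),y(t-\tau))=y'(t)$ as a smooth function and expand.

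Next I would collect powers of $h$ in the resulting expression for $(T_h)_n$. The design conditions guarantee cancellation through order $h^1$: the combination $\tfrac32 y(t_{n+1})-2y(t_n)+\tfrac12 y(t_{n-1})$ reproduces $h\,y'(t_{n+1})$ up to $O(h^3)$ by the standard BDF2 construction (this is exactly the $\sum_j \tfrac1j \nabla^j$ recipe quoted in \S\ref{BDF-meth}), while the extrapolation condition \Eqref{Fórmula_extrapolación_explícita} with $p=s=2$ forces $2g_{n-m}-g_{n-1-m}$ to agree with $g(t_{n+1-m},\dots)=g$ evaluated at the point whose delayed argument is $t_{n+1}-\tau$, again up to $O(h^2)$ in the quantity itself and hence $O(h^2)$ after the factor $h$ is absorbed into $(T_h)_n$ as defined with the $1/h$ prefactor in Definition \ref{IMEX consistency definition}. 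The upshot is that the $h^0$ and $h^1$ coefficients vanish identically and the leading surviving term is $c_2 h^2$, with $c_2$ an explicit linear combination of second derivatives of $y$ and mixed partials of $f$ and $g$; I would simply record that $c_2$ is the output of the referenced Mathematica file rather than writing it out.

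The main obstacle — really the only delicate point — is bookkeeping rather than mathematics: one must be careful that the explicit extrapolation is matching the \emph{delayed} term at the correct future node. Because $g$ in \Eqref{IMEX-BDF2 general} is evaluated at indices $n-m$ and $n-1-m$ but is meant to approximate the value that would appear in a fully implicit BDF2, namely $g_{n+1-m}$, the extrapolation stencil $\{n-m, n-1-m\}$ sits $h$ and $2h$ to the left of the target $t_{n+1-m}$; the coefficients $2$ and $-1$ are precisely $\sigma_0,\sigma_1$ solving \Eqref{Fórmula_extrapolación_explícita} with $k=m-u=m$ shifted appropriately (here $j^0\sigma=1$, $j^1$: $0\cdot\sigma_0+1\cdot\sigma_1 = -1$, giving $\sigma_0=2,\sigma_1=-1$ after the index shift), so that the extrapolation error is genuinely $O(h^2)$ and not $O(h)$. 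Once that alignment is checked, the rest is a mechanical Taylor expansion, and one concludes $\|T_h\|_\infty=O(h^2)$, i.e. the method has order $p=2$ by Definition \ref{Def IMEX order}.
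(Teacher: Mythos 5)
Your overall strategy --- a direct Taylor expansion of \Eqref{IMEX-BDF2 general} along the exact solution, with the BDF2 combination on the left reproducing $h\,y'(t_{n+1})$ up to $O(h^3)$ and the two-point extrapolation of the delayed term contributing the surviving $O(h^2)$ residual --- is exactly the paper's approach; the paper simply delegates the symbolic expansion to the referenced Mathematica notebook. Your order bookkeeping (the $\tfrac{1}{h}$ prefactor acting only on the $\alpha$-sum, cancellation at orders $h^0$ and $h^1$, leading term $c_2h^2$) is correct.

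There is, however, one concrete slip that would derail the computation if carried out literally: you decode the shorthand $g_{n-m}$ as $g(t_{n-m},y(t_{n-m}),y(t_{n-2m}))$. In the paper's notation the subscript $n-m$ labels only the \emph{delayed} argument; comparing \Eqref{IMEX-BDF2 general} with the general scheme \Eqref{original IMEX-LMM} (after the index shift $n\mapsto n-1$) shows that $g_{n-m}=g(t_{n},y(t_{n}),y(t_{n-m}))$ and $g_{n-1-m}=g(t_{n-1},y(t_{n-1}),y(t_{n-1-m}))$. Writing $G(t):=g(t,y(t),y(t-\tau))$, one then has $2g_{n-m}-g_{n-1-m}=2G(t_n)-G(t_{n-1})=G(t_{n+1})-h^2G''(t_{n+1})+O(h^3)$, which matches the implicit target $g(t_{n+1},y(t_{n+1}),y(t_{n+1}-\tau))$ to second order, as the theorem requires. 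Under your stated reading the stencil would instead extrapolate to $G(t_{n+1}-\tau)$, which differs from $G(t_{n+1})$ by $O(1)$ --- the shift $\tau=mh$ is fixed, not $O(h)$ --- so the expansion would falsely exhibit an inconsistent method. Your later remark that the target is ``the point whose delayed argument is $t_{n+1}-\tau$'' is the correct reading, so the fix is only to make the earlier identification consistent with it; once that is done, the rest of your argument goes through and yields $c_2=-\tfrac13 y'''(t_{n+1})+G''(t_{n+1})$ up to the bookkeeping of partial derivatives, in agreement with the paper's computer-assisted expansion.
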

\begin{thm}
The LTE of the IMEX BDF3 method is $LTE_{BDF3}= c_{3}h^{3}+O(h)^{4}$ where $c_{3}$ is a function that depends on $f(\cdot)$, $g(\cdot)$ and some of its derivatives, and therefore it is a third-order method. The previous link contains also a file named "2. IMEX BDF3 theory" located again in folder "Codes article" where it is explained how to obtain the full expression of the LTE for this method.
\end{thm}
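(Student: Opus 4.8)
The plan is to insert the exact solution of \Eqref{DDE eq LMM} into the difference relation \Eqref{IMEX-BDF3 general} (equivalently, into \Eqref{original IMEX-LMM} with $s=3$), apply Taylor's formula to every term about a common node, and use the governing identity $y'(t)=f(t,y(t))+g(t,y(t),y(t-\tau))$ together with the equations defining the coefficients to read off the order of cancellation. The computation becomes transparent once the local truncation error of Definition~\ref{IMEX consistency definition} is split into an \emph{implicit} piece and a \emph{delayed} piece. Put $G(t):=g(t,y(t),y(t-\tau))$; since $u=0$ forces $h=\tau/m$, one has $t_{i-m}=t_i-\tau$, so along the exact trajectory the delayed evaluations collapse to $g(t_i,y(t_i),y(t_{i-m}))=G(t_i)$, while the differential identity gives $f(t_{n+s},y(t_{n+s}))=y'(t_{n+s})-G(t_{n+s})$. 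Substituting these, and using $\beta_j^{*}=\beta_j+\beta_s\sigma_j$ with $\beta_j=0$ for $j<s$ as holds for the BDF family, one obtains
\begin{equation*}
(T_h)_n=\Big[\tfrac1h\textstyle\sum_{j=0}^{s}\alpha_j\,y(t_{n+j})-\beta_s\,y'(t_{n+s})\Big]+\beta_s\Big[G(t_{n+s})-\textstyle\sum_{j=0}^{s-1}\sigma_j\,G(t_{n+j})\Big],
\end{equation*}
with $s=3$, $\beta_3=1$, and $(\sigma_0,\sigma_1,\sigma_2)=(1,-3,3)$ for IMEX~BDF3.

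The first bracket is exactly the local truncation error of the classical implicit BDF$s$ formula applied to the delay-free equation $w'(t)=y'(t)$ — the formula built from the backward-difference recursion recalled in \S\ref{BDF-meth}, which has order $s$ (order $s$ for every $s\le6$); for $s=3$ it equals $-\tfrac14 h^{3}y^{(4)}(t_n)+O(h^{4})$, $-1/4$ being the error constant of BDF3. For the second bracket, expanding $G$ about $t_n$ turns $G(t_{n+s})-\sum_{j=0}^{s-1}\sigma_j G(t_{n+j})$ into $\sum_{q\ge0}\frac{G^{(q)}(t_n)}{q!}h^{q}\big(s^{q}-\sum_{j=0}^{s-1}\sigma_j j^{q}\big)$, and the conditions \Eqref{Fórmula_extrapolación_explícita}, which hold here with $k=s$ and $p=s=3$, annihilate the terms $q=0,1,2$; using $\sum_{j=0}^{2}\sigma_j j^{3}=21$ the surviving leading term is $\tfrac16(27-21)G^{(3)}(t_n)h^{3}+O(h^{4})=G^{(3)}(t_n)h^{3}+O(h^{4})$. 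Adding the two pieces (recall $\beta_s=1$) gives $(T_h)_n=c_3h^{3}+O(h^{4})$ with $c_3=-\tfrac14 y^{(4)}(t_n)+G^{(3)}(t_n)$, where $G^{(3)}$ is the third derivative of $t\mapsto g(t,y(t),y(t-\tau))$; via $y'=f+g$ this is indeed a function of $f$, $g$ and their derivatives, and for generic data the two terms do not cancel, so the method has order exactly $3$ (and $O(h^3)$ global convergence follows from the zero-stability of BDF3). The IMEX~BDF2 statement is the identical argument with $s=p=2$ and error constant $-1/3$.

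The one step that genuinely needs care — and where a naive argument would break down — is regularity: the splitting and the expansions above require $G(t)=g(t,y(t),y(t-\tau))$ to be of class $C^{4}$ near $t_{n+s}$, whereas solutions of \Eqref{DDE eq LMM} only gain smoothness step by step and carry primary derivative jumps at $t_0,\,t_0+\tau,\,t_0+2\tau,\dots$ propagated from the history $\phi$. One therefore assumes $f$, $g$, $\phi$ smooth and works on a fixed interval $[t_0,T]$ between consecutive breaking points, where the stated estimate is valid. Everything else is mechanical bookkeeping — carrying the Taylor expansions of the four $y$-values and the three delayed $g$-values in \Eqref{IMEX-BDF3 general} to order $h^{4}$ and collecting coefficients — which is precisely what the Mathematica notebooks ``1.~IMEX BDF2 theory'' and ``2.~IMEX BDF3 theory'' cited above carry out; for the order claim only the non-vanishing of $c_3$ is needed, not its fully expanded closed form, and the decomposition above already exhibits it.
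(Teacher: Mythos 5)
Your argument is correct, and it rests on the same underlying device as the paper --- Taylor expansion of the defect obtained by inserting the exact solution into \Eqref{IMEX-BDF3 general} --- but where the paper merely asserts the conclusion and delegates the expansion to the Mathematica notebook ``2. IMEX BDF3 theory'', you supply a structured hand proof. Your decomposition of $(T_h)_n$ into the classical implicit BDF3 residual (error constant $-\tfrac14$) plus the extrapolation defect $G(t_{n+3})-\sum_{j=0}^{2}\sigma_j G(t_{n+j})$, killed up to $O(h^{3})$ by the moment conditions \Eqref{Fórmula_extrapolación_explícita}, is precisely the mechanism that makes the brute-force symbolic computation come out to $O(h^{3})$, and the bookkeeping checks out: with $(\sigma_0,\sigma_1,\sigma_2)=(1,-3,3)$ one has $\sum_j\sigma_j j^{q}=3^{q}$ for $q=0,1,2$ and $\sum_j\sigma_j j^{3}=21$, giving the leading term $G^{(3)}(t_n)h^{3}$ and hence $c_3=-\tfrac14\,y^{(4)}+G^{(3)}$, which is indeed generically nonzero. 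You add two things the paper does not state: a closed form for the leading coefficient, and the observation that the expansion needs $G\in C^{4}$, which for a DDE holds only away from the breaking points $t_0+j\tau$ propagated from the history function --- a hypothesis the paper leaves entirely implicit. The only mild liberty is taking $k=s$ in \Eqref{Fórmula_extrapolación_explícita}, which the paper never defines, but the concrete coefficients of \Eqref{IMEX-BDF3 general} confirm that choice.
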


  \section{ Linear stability in a scalar DDE}\label{Sección: Est IMEX escalar}

  The simplest case for the analysis of the stability of a differential equation with delay is the scalar linear case. In the scientific literature, see \cite{Stability-theta-methods-DDE,Koto-Stability}, it is usually considered the following test problem to analyse the stability
  \begin{equation}\label{DDE eq LMM lineal}
    \left\{
        \begin{aligned}
              & y'(t)=-\lambda y(t)+\gamma y(t-\tau) & t\geq t_{0}\\
              & y(t) = \phi(t) & t\leq t_{0},
        \end{aligned}
    \right.
\end{equation}
however, we will slightly modify it into
  \begin{equation}\label{DDE eq LMM lineal2}
    \left\{
        \begin{aligned}
              & y'(t)=- ( \lambda ( y(t)+ \mu y(t-\tau) ) ) & t\geq t_{0}\\
              & y(t) = \phi(t) & t\leq t_{0},
        \end{aligned}
    \right.
\end{equation}
with $\lambda>0$ real number, $|\gamma|<\lambda$ and $\mu = - \frac{\gamma}{\lambda}$. In this way, the general IMEX method \Eqref{original IMEX-LMM} applied to the previous system of equations, defining $z=-\lambda h$ and $w=\gamma h = - z \mu$, will have the form:
  \begin{equation}\label{IMEX-LMM lineal}
    \sum_{j=0}^{s} \alpha_{j} y_{n+j}=z\sum_{j=0}^{s}\beta_{j}y_{n+j}+w\sum_{j=0}^{s-1}\beta_{j}^{*}y_{n+j-m}.
\end{equation}

The stability for the scalar linear delay equation is reduced to the analysis of the roots of the characteristic equation. In this sense, we obtain the polynomials
\begin{equation}\label{polinomios IMEX}
    \rho(\zeta)=\sum_{j=0}^{s} \alpha_{j} \zeta^{j}, \quad \sigma(\zeta)=\sum_{j=0}^{s} \beta_{j} \zeta^{j}, \quad \sigma^{*}(\zeta)=\sum_{j=0}^{s-1} \beta_{j}^{*} \zeta^{j},
\end{equation}
and therefore
\begin{equation}\label{IMEX-LMM eq caract lineal}
  P( \mu, z, \zeta) =  \zeta^{m}(\rho(\zeta)-z\sigma(\zeta))+z\mu\sigma^{*}(\zeta)=0.
\end{equation}

The following concepts are obtained from there:
\begin{defn}\label{Def estab eq charact} The characteristic equation \Eqref{IMEX-LMM eq caract lineal} is stable for $z\in \R_{<0 }\cup \{-\infty\}$ and $\mu\in\C$, if every solution satisfies $|\zeta|<1$ for that particular $z\in \R_{<0 }\cup \{-\infty\}$.
\end{defn}
\begin{defn}\label{Def unconditional stable region} The $P-$stability region of the method \Eqref{IMEX-LMM lineal}, denoted by $D_{P}$, is: $$D_{P}=\underset{m\geq 0}{\bigcap} D^{m}_{P}\quad \text{where} \quad D^{m}_{P}=\left\{( z,\mu) \in \C^{2} \ / \ \Eqref{IMEX-LMM eq caract lineal} \ \text{is stable} \right\}.$$ \end{defn}
\begin{defn} \label{Def border region est} For a fixed $m \in \N$, if  $z\in\R_{<0}\cup\{-\infty\}$ is fixed,  a stability region $D_{z} \subset \R^2$ is obtained. Its boundary will be denoted by
$$\Gamma_{z}= \left\{ \mu=  - \frac{\zeta^{m}(\rho(\zeta)-z\sigma(\zeta))}{z\sigma^{*}(\zeta)}\in\C \ / \ \zeta=e^ {i\theta}, \ 0\leq \theta \leq 2\pi \right\}.$$
\end{defn}
\begin{defn}\label{Def sigma_z}
The point of $\, \Gamma_{z}$ with minimum  distance respect to the origin will be $\sigma_{z}=\underset{\mu\in\Gamma_{z}}{\inf}|\mu|.$
\end{defn}

\begin{defn}\label{Def S_A}
Given the initial value problem
    \begin{equation} \label{Dahlquist-test}
        \left\{
        \begin{aligned}
            &y'(t)=\lambda y(t) &t\geq t_{0}\\
            &y(t_{0})=y_{0},
        \end{aligned}
        \right. \quad \text{with $\lambda \in \C$},
    \end{equation}
    if a numerical method applied to the ODE can be written as the recurrence equality  $y_{n+1}=R(h_{n+1}\lambda) y_{n}$, where $R(z)$ is usually called stability function (or $A-$stability function), \textit{ the $A-$stability region } for (\ref{Dahlquist-test}) is defined as
 \begin{equation} \label{S_A}
 S_{A}:=\{ z\in\C \ / \ |R(z)|<1 \}.
 \end{equation}

\end{defn}


In this way, it is possible to characterize the $P$-stability region as follows:
\begin{lema} \label{Lemma-Koto} \emph{\cite[Theorem 1]{Koto-Stability}.} Let us assume that the temporal coefficients of the method $(\alpha_{0},...,\alpha_{s} )$ and $(\beta_{0},...,\beta_{s})$ are linearly independent, and let us consider the following statements:
 \begin{enumerate}[label=(\alph*)]
 \item $z\in S_{A}, \ |\mu|<\sigma_{z}$
 \item $(z,\mu)\in D_{P }$
 \item $z\in S_{A}, \ |\mu|\leq\sigma_{z}$
 \end{enumerate}
 Then it turns out that a) $\Rightarrow$ b) $\Rightarrow$ c).
 \end{lema}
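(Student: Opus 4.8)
The plan is to analyze the characteristic equation \Eqref{IMEX-LMM eq caract lineal}, $P(\mu,z,\zeta) = \zeta^m(\rho(\zeta)-z\sigma(\zeta)) + z\mu\sigma^*(\zeta) = 0$, by viewing it as a perturbation of the delay-free characteristic polynomial $\rho(\zeta) - z\sigma(\zeta)$. Since the paper cites this as Theorem 1 of Koto, I would reconstruct the argument along these lines.

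First, for the implication a) $\Rightarrow$ b): suppose $z \in S_A$ and $|\mu| < \sigma_z$. I would argue by contradiction. Fix $m \geq 0$ and suppose \Eqref{IMEX-LMM eq caract lineal} has a root $\zeta_0$ with $|\zeta_0| \geq 1$. First handle the case $|\zeta_0| = 1$: write $\zeta_0 = e^{i\theta}$. Since $z \in S_A$ means the stability function $R(z) = z\beta_s/\alpha_s + \cdots$ satisfies $|R(z)|<1$ — equivalently (using linear independence of the $\alpha$'s and $\beta$'s so that $\rho - z\sigma$ has the right root structure) the polynomial $\rho(\zeta) - z\sigma(\zeta)$ has all roots strictly inside the unit disk — so in particular $\rho(e^{i\theta}) - z\sigma(e^{i\theta}) \neq 0$. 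Then from $P(\mu,z,e^{i\theta})=0$ we can solve $\mu = -\frac{e^{im\theta}(\rho(e^{i\theta}) - z\sigma(e^{i\theta}))}{z\sigma^*(e^{i\theta})}$, so $\mu \in \Gamma_z$ by Definition \ref{Def border region est}, hence $|\mu| \geq \sigma_z$ by Definition \ref{Def sigma_z}, contradicting $|\mu| < \sigma_z$. For the case $|\zeta_0| > 1$, I would use a continuity/homotopy argument: deform $\mu$ along the segment $t\mu$, $t\in[0,1]$; at $t=0$ the equation is $\zeta^m(\rho(\zeta)-z\sigma(\zeta))=0$, whose roots are $\zeta=0$ and the roots of $\rho-z\sigma$, all in the open unit disk; as $t$ increases, a root can only leave the disk by crossing $|\zeta|=1$, which by the previous paragraph forces $|t\mu|\geq\sigma_z$, impossible since $|t\mu|\leq|\mu|<\sigma_z$. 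Hence no root escapes, so $(z,\mu)\in D_P^m$ for every $m$, i.e. $(z,\mu)\in D_P$.

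For b) $\Rightarrow$ c): assume $(z,\mu)\in D_P$. Taking $m=0$ (or examining the limiting behavior), stability of \Eqref{IMEX-LMM eq caract lineal} for all $m$ forces the delay-free part to be stable, giving $z\in S_A$. For the bound $|\mu|\leq\sigma_z$, suppose for contradiction $|\mu|>\sigma_z$; then there is a point $\mu_1\in\Gamma_z$ with $|\mu_1|=\sigma_z<|\mu|$, corresponding to a root $\zeta=e^{i\theta_1}$ on the unit circle for the parameter value $\mu_1$. Scaling $\mu$ up from $\mu_1$ to $\mu$ and tracking this root, one shows (again via the explicit formula on $\Gamma_z$, choosing $m$ appropriately so that the modulus crosses one) that a root attains or exceeds modulus one, contradicting $(z,\mu)\in D_P$. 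The technical point making this work is that as $m$ ranges over $\N$, the factor $e^{im\theta}$ in the formula for $\Gamma_z$ lets one realize the worst-case phase.

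The main obstacle I anticipate is the $|\zeta_0|>1$ case in a) $\Rightarrow$ b) and the matching direction in b) $\Rightarrow$ c): making the continuity/degree argument rigorous requires care that roots depend continuously on the parameter $\mu$ (true since $P$ is a polynomial in $\zeta$ of fixed degree $m+s$ with leading coefficient $\alpha_s \neq 0$, independent of $\mu$) and that one cannot have roots "appearing from infinity" — which is exactly why the degree stays constant. The linear-independence hypothesis on $(\alpha_j)$ and $(\beta_j)$ is what guarantees $\sigma^*(\zeta)\not\equiv 0$ and that the correspondence between $z\in S_A$ and the root location of $\rho-z\sigma$ is the expected one, so I would make sure to invoke it precisely at those points rather than treat it as cosmetic.
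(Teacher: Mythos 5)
The paper does not actually prove this lemma --- it is imported verbatim from \cite[Theorem 1]{Koto-Stability} --- so there is no in-paper argument to compare against; your proposal has to be judged on its own. Your reconstruction of a) $\Rightarrow$ b) is essentially the standard boundary-locus argument and is sound: a root on $|\zeta|=1$ forces $\mu\in\Gamma_{z}$ via the explicit formula (using $z\in S_{A}$ to rule out $\rho(e^{i\theta})-z\sigma(e^{i\theta})=0$), and the homotopy $t\mapsto t\mu$ prevents roots from escaping the disk. One small correction there: the leading coefficient of $P(\mu,z,\cdot)$ in $\zeta$ is $\alpha_{s}-z\beta_{s}$, not $\alpha_{s}$, so you should also note that $\alpha_{s}-z\beta_{s}\neq 0$ for the relevant $z$ in order to keep the degree (and hence the root count) constant along the homotopy.

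The direction b) $\Rightarrow$ c) contains two genuine gaps. First, ``taking $m=0$'' does not yield $z\in S_{A}$: for $m=0$ the characteristic polynomial is $\rho(\zeta)-z\sigma(\zeta)+z\mu\sigma^{*}(\zeta)$, whose stability says nothing direct about the roots of $\rho-z\sigma$. The mechanism is the opposite limit $m\to\infty$: if $\rho-z\sigma$ had a root $\zeta_{0}$ with $|\zeta_{0}|>1$, then on a small circle around $\zeta_{0}$ the term $\zeta^{m}\bigl(\rho(\zeta)-z\sigma(\zeta)\bigr)$ grows like $(|\zeta_{0}|-\varepsilon)^{m}$ while $z\mu\sigma^{*}(\zeta)$ stays bounded, so Rouch\'e's theorem produces a root of $P$ outside the unit disk for $m$ large, contradicting $(z,\mu)\in D_{P}$. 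Second, the ``scale $\mu$ up from $\mu_{1}$'' step fails as stated because the minimizer $\mu_{1}\in\Gamma_{z}$ and the given $\mu$ generally have different arguments, so the path between them is not a radial scaling and your crossing argument does not apply along it. What is actually needed is the density statement you only gesture at: for $\theta$ with $\theta/(2\pi)$ irrational the phases $e^{im\theta}$, $m\in\N$, are dense on the unit circle, and since $|\mu_{0,z,\theta}|$ is continuous in $\theta$ and attains every modulus in $[\sigma_{z},\sup_{\theta}|\mu_{0,z,\theta}|)$, the union over $m$ of the curves $\Gamma_{z}$ is dense in that annulus. Even granting this, a limit of unstable parameter values is not automatically unstable --- $D_{P}$ is a countable intersection of open sets, so its complement need not be closed --- and one must still produce, for the given $\mu$ with $|\mu|>\sigma_{z}$, an actual $m$ and a root of $P(\mu,z,\cdot)$ of modulus at least one, e.g.\ by the winding-number count of roots relative to $\Gamma_{z}$ invoked in the proof of Proposition \ref{Prop internal region D_{z}}. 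As written, this half of the lemma is a plan rather than a proof.
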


As a result, since $[- \infty, 0)$ is inside the A-stability region for both Implicit BDF2 and BDF3 methods, we obtain the following result:
 \begin{coro}\label{Prop D(0,sigma)cD_z}
For any $z_{j}\in\R_{<0}\cup \{-\infty\}$, $D(0,\sigma_{z_{j}})\subseteq D_{z_{j}}$ holds.
\end{coro}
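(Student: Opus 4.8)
The plan is to obtain the inclusion directly from Lemma \ref{Lemma-Koto}, read one $\mu$-slice at a time for a fixed admissible value of $z$. Fix $z_{j}\in\R_{<0}\cup\{-\infty\}$. Two preliminary facts are needed. First, as recalled just before the statement, the entire negative real axis together with the point at infinity lies in the $A$-stability region of the underlying implicit BDF2 and BDF3 methods, so $z_{j}\in S_{A}$. Second, the hypothesis of Lemma \ref{Lemma-Koto}, linear independence of the coefficient vectors $(\alpha_{0},\dots,\alpha_{s})$ and $(\beta_{0},\dots,\beta_{s})$, holds for the methods at hand; for IMEX BDF2 and IMEX BDF3 this is immediate from the coefficients displayed in \eqref{IMEX-BDF2 general} and \eqref{IMEX-BDF3 general}, where $\sigma(\zeta)=\zeta^{s}$ has a single nonzero coefficient while $\rho(\zeta)$ has several, so neither vector is a scalar multiple of the other.

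With these two facts, let $\mu$ be any point of the open disk $D(0,\sigma_{z_{j}})$, i.e.\ $|\mu|<\sigma_{z_{j}}$. Then $(z_{j},\mu)$ satisfies statement (a) of Lemma \ref{Lemma-Koto}, so by the implication (a)$\Rightarrow$(b) it satisfies (b), namely $(z_{j},\mu)\in D_{P}$. Unwinding Definitions \ref{Def unconditional stable region} and \ref{Def border region est}, for the fixed first coordinate $z_{j}$ this says exactly that $\mu$ lies in the stability region $D_{z_{j}}$, i.e.\ that the characteristic equation \eqref{IMEX-LMM eq caract lineal} is stable for every $m\ge 0$. Since $\mu$ was an arbitrary point of $D(0,\sigma_{z_{j}})$, this yields $D(0,\sigma_{z_{j}})\subseteq D_{z_{j}}$, as claimed. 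No new estimates are required beyond Lemma \ref{Lemma-Koto}.

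I expect the only delicate part to be bookkeeping rather than analysis. One should check that $\sigma_{z_{j}}$, hence the disk $D(0,\sigma_{z_{j}})$, is genuinely independent of $m$: along $\Gamma_{z}$ one has $\zeta=e^{i\theta}$, so $|\zeta^{m}|=1$ and $|\mu|=|\rho(\zeta)-z\sigma(\zeta)|/(|z|\,|\sigma^{*}(\zeta)|)$ does not involve $m$, and neither does its infimum $\sigma_{z}$. One should also make sure that $D_{z_{j}}$ in Definition \ref{Def border region est} is understood as the $\mu$-slice of $D_{P}=\bigcap_{m\ge 0}D_{P}^{m}$ at $z=z_{j}$ (not the slice of a single $D_{P}^{m}$), and that $D(0,\sigma_{z_{j}})$ is taken to be the \emph{open} disk, so that the strict inequality $|\mu|<\sigma_{z_{j}}$ matches statement (a) of Lemma \ref{Lemma-Koto}; with the closed disk one would only recover containment up to the boundary via the weaker implication (b)$\Rightarrow$(c).
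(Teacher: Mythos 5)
Your proposal is correct and follows exactly the route the paper intends: the corollary is stated as an immediate consequence of Lemma \ref{Lemma-Koto} via the implication (a)$\Rightarrow$(b), using only the fact that $\R_{<0}\cup\{-\infty\}\subseteq S_{A}$ for the implicit BDF2/BDF3 formulas. Your additional bookkeeping (linear independence of the coefficient vectors, $m$-independence of $\sigma_{z}$, and the open-disk reading needed to match the strict inequality in statement (a)) is accurate and, if anything, more careful than the paper's one-line justification.
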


 Considering \Eqref{IMEX-BDF2 general} and \Eqref{IMEX-BDF3 general}, where the coefficients necessary to define the respective characteristic equations are collected, we will have in each case that these are:
\begin{equation}\label{IMEX-BDF2 eq caract}
-\frac{1}{2} \xi ^{-m} \left(-\xi ^{2 m}+4 \xi ^{2 m+1}-3 \xi ^{2 m+2}+2 z \xi ^{2 m+2}-4 \mu \xi z+2 \mu z\right)=0
\end{equation}
\begin{equation}\label{IMEX-BDF3 eq caract}
-\frac{1}{6} \xi ^{-m} \left(2 \xi ^{2 m}-9 \xi ^{2 m+1}+18 \xi ^{2 m+2}-11 \xi ^{2 m+3}+6 z \xi ^{2 m+3}-18 \mu \xi ^2 z+18 \mu \xi z-6 \mu z\right)=0
\end{equation}

By imposing $\xi=e^{i\theta}$ on these equations and restricting the variable $z$ to the negative real line by definition, we can consider the stability regions of each of the methods in $\R \times \C \simeq \R^{3}$.

For different values of $m$ and $z<0$, we will demonstrate below, see Proposition \ref{Prop internal region D_{z}}, that the stability region $D_{z}$ coincides with the innermost region of the curve $\Gamma_{z}$. Let us assume for now that this statement is true and let, for example, $m=0$. The stability regions for this hypothetical case are shown in Fig. \ref{fig:graficas métodos IMEX BDF}.
 \begin{figure}[h]
    \centering
    \begin{subfigure}[b]{0.35\textwidth}
        \centering
        \includegraphics[width=\textwidth]{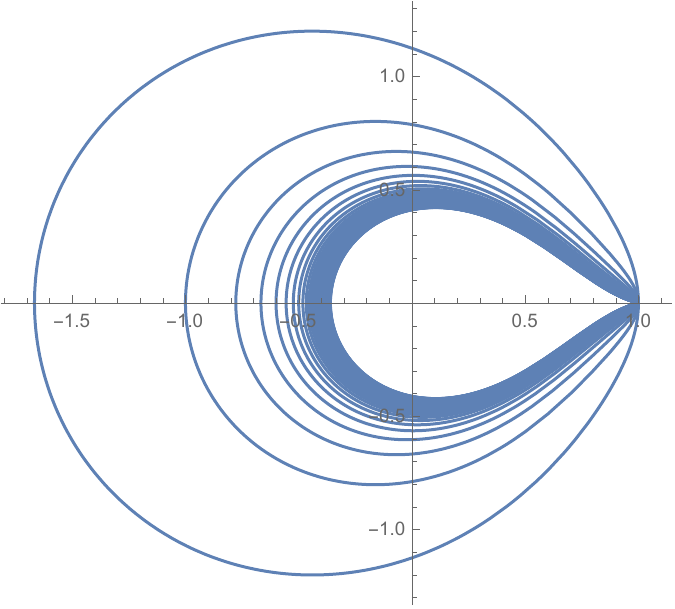}
        \caption{ IMEX BDF2 Method}
        \label{fig:figura1}
    \end{subfigure}
    \quad \quad
    \begin{subfigure}[b]{0.35\textwidth}
        \centering
        \raisebox{0.32cm}{\includegraphics[width=\textwidth]{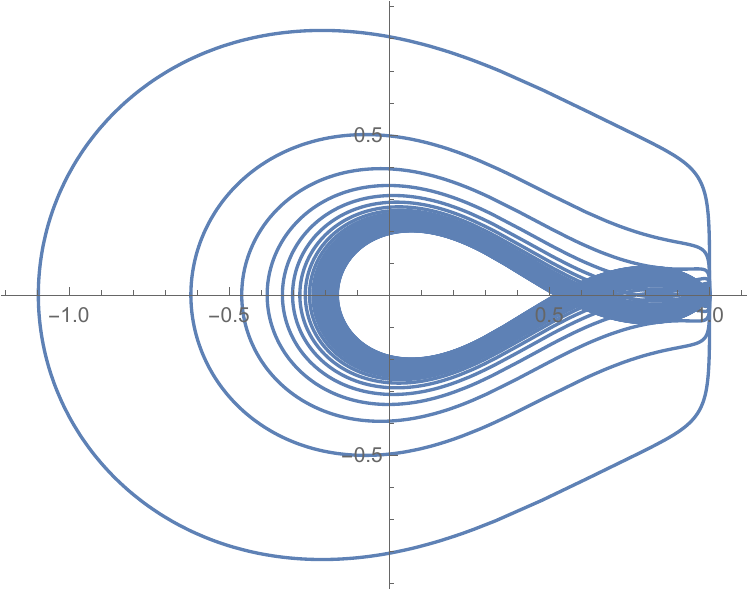}}
        \caption{IMEX BDF3 Method}
        \label{fig:figura2}
    \end{subfigure}
    \caption{  $\Gamma_{z}$ with $m=0$, $u=0$, and $z=-1, \ldots,-50$ ($\Gamma_{-50}$ is the most interior, ..., $\Gamma_{-1} $ is the most exterior region.) }
    \label{fig:graficas métodos IMEX BDF}
\end{figure}

 For $m\geq 1$, let us set a concrete value $z=-1$. The regions, which we remember will correspond to the innermost region of the represented curves, are represented in Fig. \ref{fig:graficas IMEX BDF3 z=-1}.  Additionally, we can observe how for large values $ m \gg 1$, $D_z \rightarrow D(0, \sigma_z)$ for any value $z<0$.
 \begin{figure}[h!]
    \centering
    \begin{subfigure}{0.25\textwidth}
        \centering
        \includegraphics[width=\linewidth]{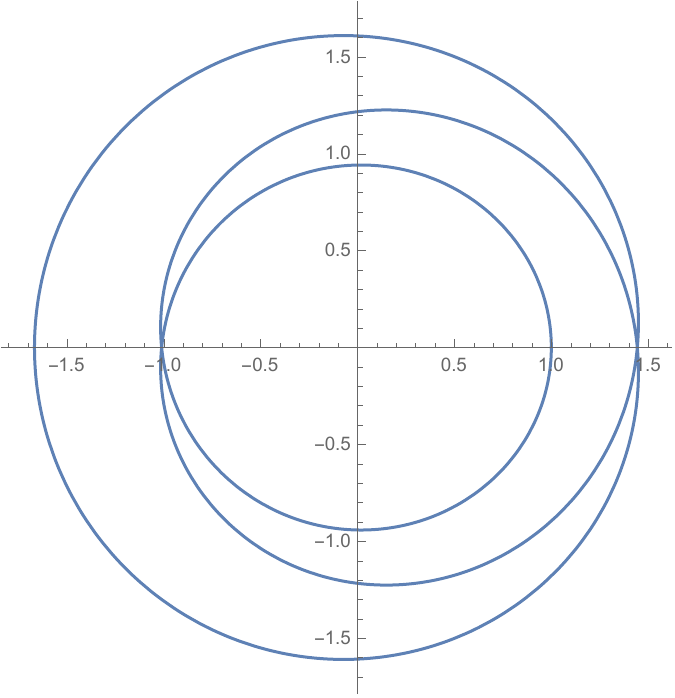}
        \caption{m=1 IMEX BDF2}
        \label{fig:figura3}
    \end{subfigure}
    \hspace{1cm}
    \begin{subfigure}{0.25\textwidth}
        \centering
        \includegraphics[width=\linewidth]{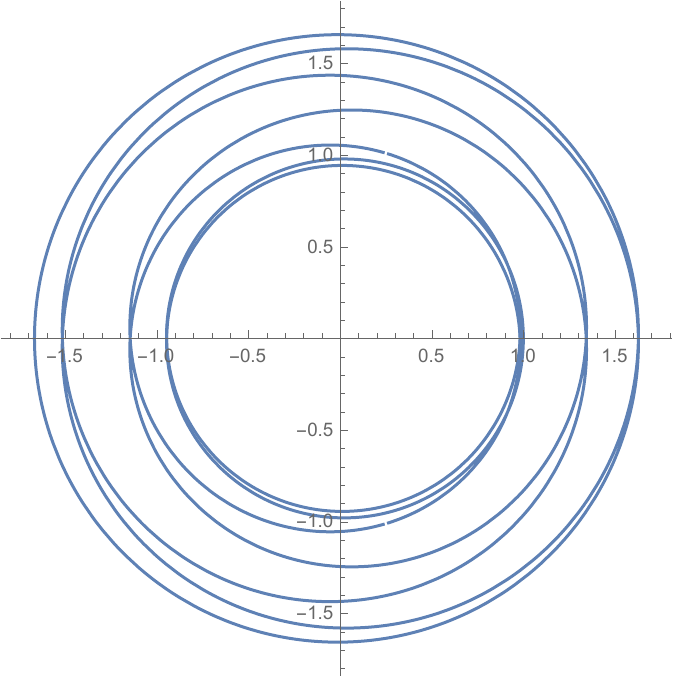}
        \caption{m=3 IMEX BDF2}
        \label{fig:figura4}
    \end{subfigure}
    \hspace{1cm}
    \begin{subfigure}{0.25\textwidth}
        \centering
        \includegraphics[width=\linewidth]{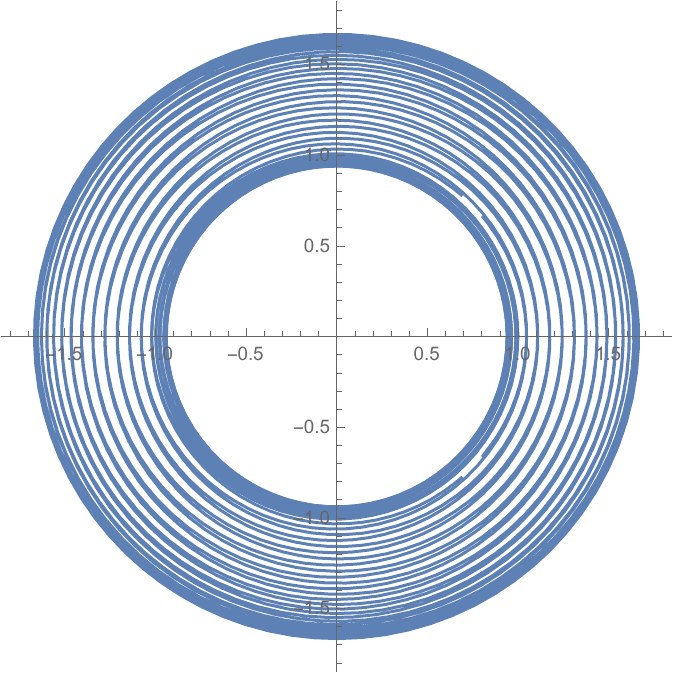}
        \caption{m=20 IMEX BDF2}
        \label{fig:figura5}
    \end{subfigure}\\
    \vspace{0.1cm}
    \begin{subfigure}{0.25\textwidth}
        \centering
        \includegraphics[width=\linewidth]{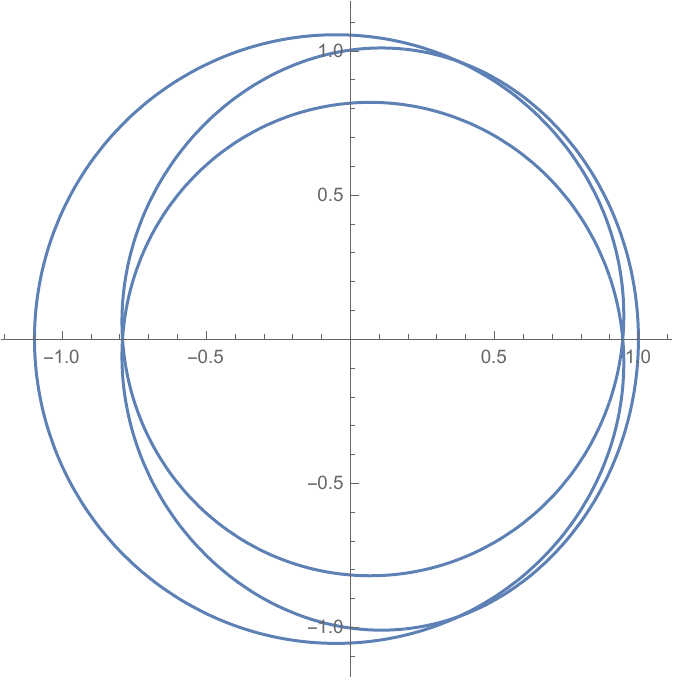}
        \caption{m=1 IMEX BDF3}
        \label{fig:figura6}
    \end{subfigure}
    \hspace{1cm}
    \begin{subfigure}{0.25\textwidth}
        \centering
        \includegraphics[width=\linewidth]{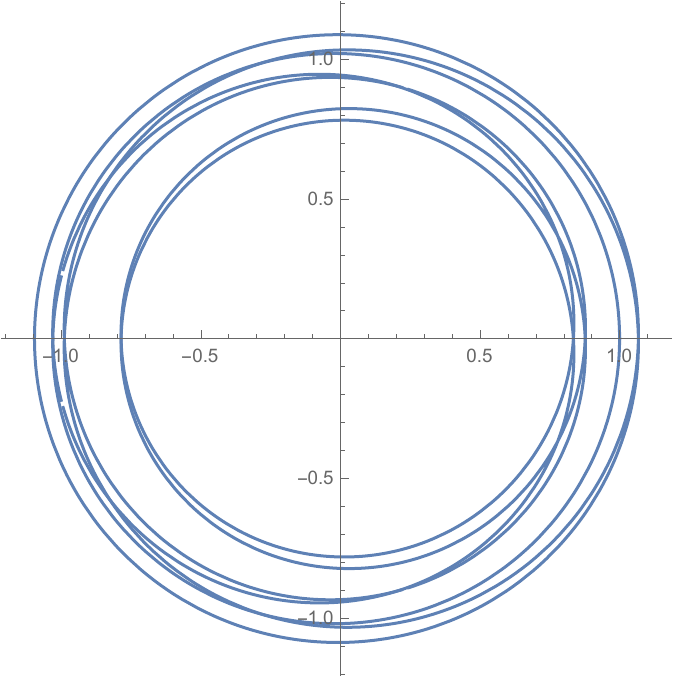}
        \caption{m=3 IMEX BDF3}
        \label{fig:figura7}
    \end{subfigure}
    \hspace{1cm}
    \begin{subfigure}{0.25\textwidth}
        \centering
        \includegraphics[width=\linewidth]{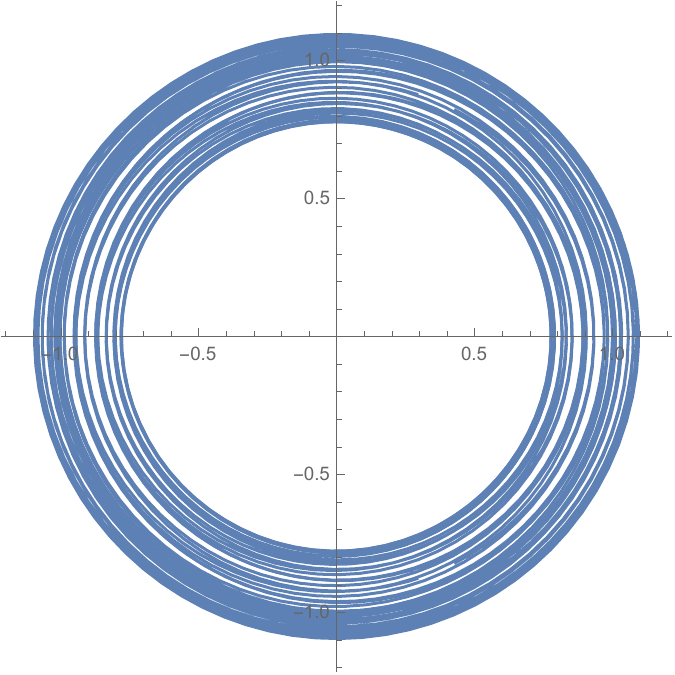}
        \caption{m=20 IMEX BDF3}
        \label{fig:figura8}
    \end{subfigure}

    \caption{ $\Gamma_{-1}$ for different values of $m$.}
    \label{fig:graficas IMEX BDF3 z=-1}
\end{figure}

\begin{obs}
For the same values of $z$ and $m$ in both procedures, the stability regions of the IMEX BDF2 method are slightly larger than those of the IMEX BDF3 method. Obviously, the larger this region, the larger the step size that can be imposed on the method without risking stability, thus decreasing the total computational time to solve the differential equation.
\end{obs}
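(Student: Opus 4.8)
To make the Remark precise and prove it, the plan is to compare the two boundary curves $\Gamma_z$ directly from the explicit characteristic equations \eqref{IMEX-BDF2 eq caract} and \eqref{IMEX-BDF3 eq caract}, after first disposing of the parameter $m$. Writing $\zeta=e^{i\theta}$ in Definition~\ref{Def border region est}, the radial profile of $\Gamma_z$ is
\[
|\mu(\theta)|=\frac{\bigl|\rho(\zeta)-z\,\sigma(\zeta)\bigr|}{|z|\,\bigl|\sigma^{*}(\zeta)\bigr|},
\]
so the factor $\zeta^{m}$ (of modulus one) only rotates the point $\mu(\theta)$ by $m\theta$ and leaves $|\mu(\theta)|$, hence also $\sigma_{z}$, independent of $m$; this is incidentally why $D_{z}\to D(0,\sigma_{z})$ as $m\gg 1$. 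Thus it suffices to work out $\mu$ as a function of $\theta$ and $z$ for $m=0$, where $\rho$ is the usual implicit BDF2 (resp.\ BDF3) polynomial, $\sigma_{\mathrm{BDF2}}(\zeta)=\zeta^{2}$, $\sigma_{\mathrm{BDF2}}^{*}(\zeta)=2\zeta-1$, and $\sigma_{\mathrm{BDF3}}(\zeta)=\zeta^{3}$, $\sigma_{\mathrm{BDF3}}^{*}(\zeta)=3\zeta^{2}-3\zeta+1$.

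The core of the argument is the pointwise inequality $|\mu_{\mathrm{BDF2}}(\theta,z)|\ge|\mu_{\mathrm{BDF3}}(\theta,z)|$ for every $\theta\in[0,2\pi]$ and every $z\in(-\infty,0]\cup\{-\infty\}$, with equality only at isolated $(\theta,z)$ (e.g.\ $\theta=0$). I would establish it by treating the two limiting regimes first: at $z\to 0^{-}$, $\Gamma_{z}$ degenerates to (a rescaling of) the $A$-stability boundary of the implicit BDF; at $z=-\infty$, $|\mu|=1/|\sigma^{*}(e^{i\theta})|$, so the claim becomes $|2e^{i\theta}-1|\le|3e^{2i\theta}-3e^{i\theta}+1|$, i.e.\ after squaring a one-line inequality in $\cos\theta$. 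Then, for finite $z<0$, clearing denominators reduces the statement to a polynomial inequality in $\cos\theta$ whose coefficients are polynomials in $z$, whose sign can be certified by the same Mathematica routines already used for the consistency results; monotonicity in $z$ of the relevant expressions should let one interpolate between the two endpoints rather than run a brute-force two-variable check.

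From this radial domination the ``inscribed disk'' part of the claim follows immediately: $\sigma_{z}^{\mathrm{BDF2}}\ge\sigma_{z}^{\mathrm{BDF3}}$, whence $D(0,\sigma_{z}^{\mathrm{BDF3}})\subseteq D(0,\sigma_{z}^{\mathrm{BDF2}})\subseteq D_{z}^{\mathrm{BDF2}}$ by Corollary~\ref{Prop D(0,sigma)cD_z}. For the full statement $D_{z}^{\mathrm{BDF3}}\subseteq D_{z}^{\mathrm{BDF2}}$ one invokes Proposition~\ref{Prop internal region D_{z}}, which identifies $D_{z}$ with the region enclosed by $\Gamma_{z}$: if each $\Gamma_{z}$ is star-shaped with respect to the origin (every ray from $0$ meets it once, equivalently $\arg\mu(\theta)$ is monotone), then comparing $|\mu_{\mathrm{BDF2}}|$ with $|\mu_{\mathrm{BDF3}}|$ along matching rays yields the containment.

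The main obstacle is precisely this last step. Star-shapedness does hold for $m=0$ (as the plots in Fig.~\ref{fig:graficas métodos IMEX BDF} suggest and the explicit parametrisation confirms), but for large $m$ the rotation by $m\theta$ makes $\Gamma_{z}$ strongly non-convex and multiply wound (Fig.~\ref{fig:graficas IMEX BDF3 z=-1}), so $D_{z}$ is cut out by the \emph{inner envelope} of these loops rather than by a single-valued radial function, and a pointwise modulus bound no longer nests the two regions automatically. A fully rigorous treatment therefore needs either a comparison of these inner envelopes (controlling simultaneously the radial profile and the winding of $\arg\mu(\theta)$ for both methods), or, as an honest fallback, one proves only $\sigma_{z}^{\mathrm{BDF2}}\ge\sigma_{z}^{\mathrm{BDF3}}$ rigorously and reads the ``slightly larger'' assertion for the remaining values of $m$ off Figs.~\ref{fig:graficas métodos IMEX BDF}--\ref{fig:graficas IMEX BDF3 z=-1}.
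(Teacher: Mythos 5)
There is a genuine gap, and it sits at the heart of your argument: the pointwise inequality $|\mu_{\mathrm{BDF2}}(\theta,z)|\ge|\mu_{\mathrm{BDF3}}(\theta,z)|$ is false. Take your own test case $z=-\infty$, where $|\mu|=1/|\sigma^{*}(e^{i\theta})|$ and the claim becomes $|2e^{i\theta}-1|\le|3e^{2i\theta}-3e^{i\theta}+1|$. Squaring with $c=\cos\theta$ gives $5-4c$ versus $13-24c+12c^{2}$, and the difference is
\[
12c^{2}-20c+8=4(3c-2)(c-1),
\]
which is \emph{negative} for $c\in(2/3,1)$. So for $\theta$ near $0$ the BDF3 boundary point lies \emph{farther} from the origin than the BDF2 one (both curves pass through $\mu=1$ at $\theta=0$ and separate in the wrong direction just after). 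The "one-line inequality in $\cos\theta$" you defer to is exactly where the argument breaks, and no monotonicity-in-$z$ interpolation can rescue a claim that already fails at an endpoint. What \emph{does} survive is the comparison of the minima: the minimum of $|\mu|$ is attained near $\theta=\pi$, where $|2e^{i\pi}-1|=3\le 7=|3e^{2i\pi\cdot}-3e^{i\pi}+1|$ evaluated at $\zeta=-1$, giving $\sigma_{-\infty}^{\mathrm{BDF2}}=1/3>1/7=\sigma_{-\infty}^{\mathrm{BDF3}}$, and more generally $\psi(z)\ge\tilde{\psi}(z)$ can be read off from \eqref{Fun minimos IMEX BDF2} and \eqref{Fun minimos IMEX BDF3}.

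You should also be aware that the paper offers no proof of this Remark at all: it is an empirical observation drawn from Figures \ref{fig:graficas métodos IMEX BDF} and \ref{fig:graficas IMEX BDF3 z=-1}, and the only quantitative content the paper actually uses downstream is the inradius comparison $D(0,\sigma_{z}^{\mathrm{BDF3}})\subseteq D(0,\sigma_{z}^{\mathrm{BDF2}})$, not a containment of the full regions $D_{z}$. Your reduction to $m=0$ via $|\zeta^{m}|=1$ is correct and matches the paper's own computation in Section \ref{Section:sigmaIMEXBDF}, and your candour about the winding/inner-envelope obstruction for $m\ge 1$ is well placed; but given that the radial domination fails even for $m=0$, the honest deliverable here is precisely your "fallback": prove $\sigma_{z}^{\mathrm{BDF2}}\ge\sigma_{z}^{\mathrm{BDF3}}$ (which is genuinely a minimum-versus-minimum comparison, not a pointwise one) and treat the stronger "regions are larger" phrasing as a graphical observation, which is all the paper itself claims.
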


\vspace{0.2cm}
With this we will develop the following concept, from which we can consider various implications:
\begin{defn}\label{Def region est incond D}
We will call the unconditional stability region $D$ to the set of values $\mu\in\C$ such that \Eqref{IMEX-LMM eq caract lineal} is stable for all $z\in\R_{<0}\cup\{-\infty\}$ and all $m\geq 0$. This is
$$D_{z}=\left\{\mu\in\C \ / \ \Eqref{IMEX-LMM eq caract lineal} \ \text{is stable for a} \ z\in\R_{<0}\cup\{-\infty\} \ \text{fixed and }\ \forall \ m\geq 0\right\},$$ $$D=\underset{z \in \R_{<0} \cup \{-\infty \}}{\bigcap} D_{z}.$$
 \end{defn}
\begin{prop}\label{Prop internal region D_{z}} Let $\Tilde{D}_{z}$ be the innermost region of the curve $\Gamma_{z}$. It is then verified that $\Tilde{D}_{z}=D_{z}$.
\end{prop}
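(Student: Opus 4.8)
The plan is to show the two inclusions $\Tilde{D}_{z}\subseteq D_{z}$ and $D_{z}\subseteq\Tilde{D}_{z}$ by a continuity/winding-number argument applied to the characteristic polynomial $P(\mu,z,\zeta)=\zeta^{m}(\rho(\zeta)-z\sigma(\zeta))+z\mu\sigma^{*}(\zeta)$ in the variable $\zeta$, viewing $\mu$ as a parameter. First I would fix $z\in\R_{<0}\cup\{-\infty\}$ and $m\geq 0$, and regard $P$ as a polynomial in $\zeta$ of fixed degree (namely $2m+s$ for the IMEX BDF schemes, as one reads off from \Eqref{IMEX-BDF2 eq caract}--\Eqref{IMEX-BDF3 eq caract}); its leading coefficient is $\alpha_{s}$, which does not vanish, so the number of roots is constant as $\mu$ varies. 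Stability in the sense of Definition \ref{Def estab eq charact} means all these roots lie strictly inside the unit disc, so by Rouché's theorem the set of $\mu$ for which this holds is determined by how many zeros of $\zeta\mapsto P(\mu,z,\zeta)$ lie in $|\zeta|<1$, and this integer can only change when a root crosses the unit circle, i.e. when $P(\mu,z,e^{i\theta})=0$ for some $\theta$. Solving that equation for $\mu$ gives exactly the curve $\Gamma_{z}$ of Definition \ref{Def border region est} (using that $z\sigma^{*}(e^{i\theta})\neq 0$, which follows from the linear independence hypothesis of Lemma \ref{Lemma-Koto} together with $\sigma^{*}$ being a genuine degree-$(s-1)$ polynomial). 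Hence $\Gamma_{z}$ partitions $\C$ into connected components on each of which the root count inside the disc is constant, and $D_{z}$ is a union of such components.

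Next I would identify \emph{which} component is $D_{z}$. The natural anchor is $\mu=0$: at $\mu=0$ the characteristic equation degenerates to $\zeta^{m}(\rho(\zeta)-z\sigma(\zeta))=0$, whose nonzero roots are the roots of the stability polynomial of the underlying implicit BDF method evaluated at $z\in\R_{<0}$, all of modulus $<1$ because $\R_{<0}\subseteq S_{A}$ for BDF2 and BDF3 (this is exactly the fact invoked just before Corollary \ref{Prop D(0,sigma)cD_z}), and the remaining $m$ roots are $\zeta=0$. So $\mu=0$ lies in the stable component, and since $0\in\Tilde{D}_{z}$ (the innermost region contains the origin, being bounded by a curve that, by Definition \ref{Def sigma_z}, stays at distance $\sigma_{z}>0$ from the origin), the component of $\C\setminus\Gamma_{z}$ containing the origin is simultaneously contained in $D_{z}$ and in $\Tilde{D}_{z}$. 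To upgrade this to equality I would argue that $\Tilde{D}_{z}$ is precisely \emph{one} component of the complement of $\Gamma_{z}$ — the bounded one containing $0$ — so it is contained in $D_{z}$; conversely, any $\mu$ in $D_{z}$ cannot lie on $\Gamma_{z}$ (a boundary point has a root on the unit circle, contradicting strict stability), and cannot lie in an ``outer'' component, because on the unbounded component the root count is not full: letting $|\mu|\to\infty$, the term $z\mu\sigma^{*}(\zeta)$ dominates and forces roots of $P(\mu,z,\cdot)$ to approach the roots of $\sigma^{*}$, of which at least one lies outside or on the unit circle for the BDF extrapolation polynomials (indeed $\sigma^{*}(1)=\sum\beta_j^*\neq 0$ and one checks the zeros of $\sigma^{*}$ are not all inside $|\zeta|<1$). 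This pins $D_{z}=\Tilde{D}_{z}$.

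The main obstacle I anticipate is the global topological bookkeeping: a priori $\Gamma_{z}$, being the image of a circle under a rational map, may be a self-intersecting curve carving $\C$ into several bounded components, so ``innermost region'' needs a precise meaning and one must verify that the root count jumps by a definite amount across $\Gamma_{z}$ in the right direction (decreasing as one moves outward) so that only the innermost component is fully stable. I would handle this by a local crossing analysis: at a smooth point of $\Gamma_{z}$ where a single simple root $\zeta_{0}=e^{i\theta_{0}}$ sits on the circle, implicitly differentiate $P(\mu,z,\zeta)=0$ to get $d\zeta/d\mu$ and check the sign of $\frac{d}{dt}|\zeta|^{2}$ as $\mu$ crosses transversally, showing the root moves from inside to outside as $\mu$ exits $\Tilde{D}_{z}$; combined with the anchoring at $\mu=0$ and the $|\mu|\to\infty$ behaviour, this forces the stable set to be exactly the innermost component. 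For the degenerate case $z=-\infty$ one takes the corresponding limit of the coefficients (the $\rho(\zeta)-z\sigma(\zeta)$ term reducing to $-\sigma(\zeta)$ after rescaling), where $S_{A}$ again contains the relevant point and the same argument applies, completing the proof.
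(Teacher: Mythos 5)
Your strategy is the same one the paper sketches (it compresses the whole argument into a citation of \cite{Art-Alejandro} and the phrase ``Cauchy's integral theorem''): anchor at $\mu=0$, where the roots of $\zeta^{m}(\rho(\zeta)-z\sigma(\zeta))$ all lie in $|\zeta|<1$ because $\R_{<0}\subseteq S_{A}$ for BDF2/BDF3; note that the number of roots in the open unit disc is locally constant in $\mu$ and can change only when a root crosses $|\zeta|=1$, i.e.\ only when $\mu\in\Gamma_{z}$; and then identify $D_{z}$ with the component of $\C\setminus\Gamma_{z}$ containing the origin. Your Rouch\'e/argument-principle formulation is that same mechanism made explicit, and your local transversal-crossing analysis addresses the self-intersection bookkeeping that the paper glosses over.

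However, one step of your argument is concretely false. You rule out the unbounded component by letting $|\mu|\to\infty$, asserting that the roots of $P(\mu,z,\cdot)$ approach the zeros of $\sigma^{*}$ and that ``the zeros of $\sigma^{*}$ are not all inside $|\zeta|<1$.'' For the methods at hand they are all strictly inside: for IMEX BDF2, $\sigma^{*}(\zeta)=2\zeta-1$ has its only zero at $\zeta=\tfrac12$, and for IMEX BDF3, $\sigma^{*}(\zeta)=3\zeta^{2}-3\zeta+1$ has zeros of modulus $1/\sqrt{3}$. So that check comes out the opposite way and the step fails as written. The correct reason the unbounded component is unstable is a degree count: $P(\mu,z,\zeta)=\zeta^{m}(\rho(\zeta)-z\sigma(\zeta))+\mu\,z\sigma^{*}(\zeta)$ has degree $m+s$ in $\zeta$ (not $2m+s$ --- the displayed characteristic equations carry an extra factor $\xi^{m}$ --- and its leading coefficient is $\alpha_{s}-z\beta_{s}$, not $\alpha_{s}$), whereas $z\sigma^{*}$ has degree $s-1$; hence as $|\mu|\to\infty$ only $s-1$ roots converge to the zeros of $\sigma^{*}$ while the remaining $m+1\geq 1$ roots tend to infinity in modulus and leave the unit disc. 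With that replacement your argument closes; without it, the instability of the outer components is unsupported.
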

\begin{dem}

Let us first see that the mentioned region $\Tilde{D}_{z}$ exists, that is, the curve $\Gamma_{z}$ is closed and bounded. Knowing that $e^{i\theta}=\cos(\theta)+i\sin(\theta)$, we will then have from the Definition \ref{Def border region est} that any point on said curve is of the form
$$\mu=-\frac{(\cos(\theta m)+i\sin(\theta m))\sum_{j=1}^{s}(\alpha_{j}-z\beta_{j})(\cos(\theta j)+i\sin(\theta j))}{z\sum_{j=1}^{s-1}\beta_{j}^{*}(\cos(\theta j)+i\sin(\theta j))}$$

Multiplying by $\sum_{j=1}^{s-1}(\cos(\theta j)-i\sin(\theta j))$ to the previous expression, both the numerator and the denominator, and taking into account the trigonometric identities, we obtain the expressions for the real and imaginary parts of $\mu$ such that
$$\Re(\mu)=-\frac{\sum_{j=1}^{s}\alpha_{j}-z\beta_{j}}{z\sum_{j=1}^{s-1}\beta_{j}^{*}}\cos(\theta(m+s)), \quad \quad \quad
\Im(\mu)=-\frac{\sum_{j=1}^{s}\alpha_{j}-z\beta_{j}}{z\sum_{j=1}^{s-1}\beta_{j}^{*}}\sin(\theta(m+s))\cdot i.$$
Both functions are continuous in their domains, and symmetric with respect to the real axis (due to the even symmetry of the cosine function and the odd symmetry of the sine function), and for $\theta=\{0, \pi\}$, it is true that $\Im(\mu)=0$. Therefore, it is clear that the curve $\Gamma_{z}$ is closed and bounded.

\vspace{0.1cm}
As to demonstrate that $\Tilde{D}_{z}=D_{z}$, it is possible to do it in a similar way as in \cite[p.8]{Art-Alejandro}: it is clear that for any $z \in \R_{<0}$,   $0 \in D_z$ and, by continuity, $\Tilde{D}_z \subseteq D_z$.    As for the opposite inclusion it is possible to use Cauchy's integral theorem to demonstrate that only in the innermost region of the curve $\Gamma_{z}$, $P( \mu, z, \zeta)$ has all the roots with $|\zeta | < 1$. $\hfill\square$
\end{dem}

In order to study  the stability for different values of $z$ of both methods, from Lemma \ref{Lemma-Koto}, we observe that  it is necessary to first calculate the values for $\sigma_{z}$:

  \subsection{ $\sigma_{z}$ for IMEX BDF2 and IMEX BDF3 methods }\label{Section:sigmaIMEXBDF}

Let  (\ref{IMEX-BDF2 eq caract})-(\ref{IMEX-BDF3 eq caract}) be the characteristic equations of the IMEX BDF2 and IMEX BDF3 methods, respectively. Imposing $\xi=e^{i\theta}$, with $\theta\in [0,2\pi]$, on these and solving with respect to the parameter $\mu$, we arrive at the equations of the curves $\Gamma_{z}$ for each $z<0$ and each method:
$$\mu_{m, z, \theta}=\frac{(e^{i\theta})^{2m}(-1+4e^{i\theta}-3e^{2i\theta}+2e^{2i\theta}z)}{2(-1+e^{i\theta})z} \quad \quad \text{IMEX BDF2}$$

$$\mu_{m, z, \theta}=\frac{(e^{i\theta})^{2m}(2-9e^{i\theta}+18e^{2i\theta}-11e^{3i\theta}+6e^{3i\theta} z)}{6(1-3e^{i\theta}+3e^{2itheta})z} \quad \quad \text{IMEX BDF3}$$

Let us suppose that for a given $z$, $\sigma_{z}$ is obtained from $\sigma_{z} = |\mu_{m, z, \theta}|$ for some values of $m$ and $ \theta$.  It is clear for both methods that $|\mu_{m, z, \theta}|=|\mu_{0, z, \theta}|$, which implies that the study can be reduced to the hypothetical case $m=0$. The calculations for the first method (those for the second being analogous) are as follows:
\begin{equation*}
\begin{aligned}
\left|\mu_{m, z, \theta}\right| &=\left|\frac{(e^{i\theta})^{2m}(-1+4e^{i\theta}-3e^{2i\theta}+2e^{2i\theta}z)}{2(-1+e^{i\theta})z}\right|=\frac{\left|e^{i\theta}\right|^{2m}\cdot\left|-1+4e^{i\theta}-3e^{ 2i\theta}+2e^{2i\theta}z\right|}{\left|2(-1+e^{i\theta})z\right|}\underset{\underset{\left|e^{i\theta}\right|=1}{\uparrow}}{=}\\
 &=\frac{\left|-1+4e^{i\theta}-3e^{2i\theta}+2e^{2i\theta}z)\right|}{\left|2(-1+e^{i\theta})z\right|}=\left|\mu_{0, z, \theta}\right|
 \end{aligned} \end{equation*}

\vspace{\baselineskip}
\underline{IMEX BDF2}

 Let us see how the aforementioned minimum function is constructed. Starting from the curve equations above, we have that the stability regions in this specific case are delimited by the formula:
$$\mu_{BDF2}=\frac{-1+4e^{i\theta}-3e^{2i\theta}+2e^{2i\theta}z}{2(-1+e^{i\theta})z}, \quad \theta\in [-\pi ,\pi].$$

For any $z<0$ and $\theta\in [0,\pi]$
(we can reduce the study of $\min |\mu_{0, z, \theta}| $ to this interval because of symmetries), we have a specific point in $\Gamma_{z}$, and its distance to the origin can be calculated by the following
 $$ \phi(z,\theta) := \left|\mu_{BDF2}(z,\theta)\right|^2 = \frac{13-6z+2z^{2}+8(z-2)\cos{\theta}+(3-2z)\cos{2\theta}}{2z^{2}(5-4\cos{\theta})}.$$
\\
Consequently, for a given and fixed $z$ value, we can determine the minimum values of $\phi(z, \cdot)$ by calculating when
$$  \frac{\partial \phi(z,\theta)}{ \partial \theta} = 0.$$
For  $\theta\in [0 ,\pi]$ there are three possible minimum values:
$$\theta=0, \quad \theta=2\arctan{\sqrt{\frac{4-2z-2z^{2}-\sqrt{-15+4z+52z^{2}-32z^{3}}}{-31+20z+2z^{2}}}} \quad \text{or} \quad \theta=\pi.$$

\vspace{0.2cm}
Applying each of these $\theta$ values to $\phi(z,\theta)$, we arrive at three functions of $z$ that, depending on said parameter, take values some greater than others. By performing comparisons to find out which of them then produces the minimum point $\sigma_{z}$ for each $z$, we finally obtain the function $\psi(z):=\sigma_{z}$:
\begin{equation}\label{Fun minimos IMEX BDF2}
\psi(z)= \left\{
\begin{aligned}
 \psi_1(z)=1,  & \quad \quad z\in \left[-\frac{1}{\sqrt{2}},0\right),\\
  \psi_2(z)=-\scalebox{1.2}{$\frac{\sqrt{1+2z+\sqrt{-(-3+2z)(1+2z)(-5+8z)}}}{2\sqrt{2}z}$},  &  \quad  \quad \scalebox{1}{$z\in \left[\frac{1}{2}(-10-9\sqrt{2}),-\frac{1}{\sqrt{2}}\right)$},\\
  \psi_3(z)=\frac{-4+z}{3z}, &  \quad  \quad   z\in \left(-\infty,\frac{1}{2}(-10-9\sqrt{2})\right).
\end{aligned}
\right.
\end{equation}
Evaluating the functions at the points $-\frac{1}{\sqrt{2}}$, $\frac{1}{2}(-10-9\sqrt{2})$, it is possible to check that $\psi(z)$ is a continuous function, so the extremes can be taken arbitrarily within some intervals or others. The graphical representation of this function can be found in  Fig. \ref{fig:graficas psi(z)}.

With all of that we can conclude the following:
\begin{thm}\label{Non-decreasing psi}
$\psi(z)$  is non-decreasing for every value $z$ belonging to the interval $[-\infty, 0)$.
\end{thm}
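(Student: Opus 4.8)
The plan is to use the explicit piecewise formula \eqref{Fun minimos IMEX BDF2} for $\psi$ together with the continuity of $\psi$ on $[-\infty,0)$ that has just been verified. Because the three intervals of definition are arranged in increasing order ($\psi_3$ leftmost, then $\psi_2$, then $\psi_1$) and $\psi$ is continuous at the two junction points, a continuous function that is non-decreasing on each of the three pieces is automatically non-decreasing on all of $[-\infty,0)$; so it suffices to treat $\psi_1,\psi_2,\psi_3$ separately. The two outer branches are immediate: $\psi_1\equiv 1$ is constant, and $\psi_3(z)=\frac{z-4}{3z}=\frac13-\frac{4}{3z}$ has $\psi_3'(z)=\frac{4}{3z^{2}}>0$, so $\psi_3$ is in fact strictly increasing on $\bigl(-\infty,\tfrac12(-10-9\sqrt2)\bigr)$.

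The middle branch carries the real content. Writing $q(z):=-(1+2z)(2z-3)(8z-5)=-32z^{3}+52z^{2}+4z-15$, we have $\psi_2(z)=-\frac{\sqrt{\,1+2z+\sqrt{q(z)}\,}}{2\sqrt2\,z}$, and first I would record that $\psi_2>0$ on its interval: the identity $q(z)-(1+2z)^{2}=-16(z-1)^{2}(2z+1)$ gives $q(z)>(1+2z)^{2}$ whenever $2z+1<0$, hence $1+2z+\sqrt{q(z)}>0$ there (which also makes the outer square root well defined). Since $\psi_2>0$, it is non-decreasing if and only if $\Phi:=\psi_2^{2}=\frac{1+2z+\sqrt{q(z)}}{8z^{2}}$ is, and a direct differentiation (putting $\Phi$ over the common denominator $8z^2$) yields
\[
\Phi'(z)=\frac{N(z)-4(1+z)\sqrt{q(z)}}{16\,z^{3}\sqrt{q(z)}},\qquad N(z):=z\,q'(z)-4q(z)=32z^{3}-104z^{2}-12z+60.
\]
As the denominator is negative on $[\tfrac12(-10-9\sqrt2),-\tfrac1{\sqrt2})$, proving $\Phi'\ge 0$ amounts to establishing $N(z)\le 4(1+z)\sqrt{q(z)}$ there.

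I would settle this inequality by a case split at $z=-1$. For $z\in[-1,-\tfrac1{\sqrt2})$ the right-hand side is $\ge 0$: where $N(z)\le 0$ the inequality is clear, and where $N(z)>0$ both sides are non-negative, so squaring turns it into $R(z)\le 0$ with $R(z):=N(z)^{2}-16(1+z)^{2}q(z)$. For $z\in[\tfrac12(-10-9\sqrt2),-1)$ one has $1+z<0$, and also $N(z)<0$ there: indeed $8z^{3}-26z^{2}-3z+15$ has signs $-,+,-,+$ at $z=-1,0,1,4$, so all three of its real roots lie in $(-1,4)$ and it is therefore strictly negative on $(-\infty,-1]$, and $N=4(8z^{3}-26z^{2}-3z+15)$. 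Then both sides of the inequality are negative, and after multiplying by $-1$ and squaring the claim becomes $R(z)\ge 0$. Thus everything reduces to controlling the sign of the degree-$6$ polynomial $R$ on these two sub-intervals; note that $z=-\tfrac1{\sqrt2}$ is a root of $R$ (reflecting the tangency $\psi_2(-\tfrac1{\sqrt2})=1=\psi_1(-\tfrac1{\sqrt2})$, i.e. $\psi_2'(-\tfrac1{\sqrt2})=0$), and one finishes by isolating the finitely many real roots of $R$ and testing its sign between consecutive roots at sample points, in the same computer-algebra spirit already used for the LTE computations of Section~\ref{Consistency}.

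The main obstacle is exactly this last step: neither $N$ nor $R$ factors over $\mathbb{Q}$, so the roots are not available in closed form and must be isolated rigorously (Sturm sequences, or chains of sign changes), and one must keep careful track of which way the squared inequality points on each piece. Everything upstream — the reduction to the three branches via continuity, the two trivial branches, the positivity of $\psi_2$, and the differentiation turning the $\psi_2$ case into the polynomial inequality for $R$ — is routine.
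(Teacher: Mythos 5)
Your proposal follows the same skeleton as the paper's proof --- reduce to the three branches using continuity of $\psi$ at the junction points, then verify monotonicity branch by branch, with $\psi_1\equiv 1$ and $\psi_3'(z)=\tfrac{4}{3z^2}>0$ immediate --- but it diverges substantially on the middle branch, which is where the paper is weakest. The paper simply asserts that ``numerically, it is easy to observe'' that $\psi_2'>0$ and points to a figure of the derivative; you instead make this branch certifiable: you verify $1+2z+\sqrt{q(z)}>0$ via the identity $q(z)-(1+2z)^2=-16(z-1)^2(2z+1)$ (so squaring is legitimate), compute $\Phi'=\bigl(N(z)-4(1+z)\sqrt{q(z)}\bigr)/\bigl(16z^3\sqrt{q(z)}\bigr)$ with $N=zq'-4q$ (I checked the algebra; it is correct, as is the sign analysis of the denominator and the sign of $N$ on $(-\infty,-1]$ from the cubic $8z^3-26z^2-3z+15$), and reduce everything to the sign of a single degree-$6$ polynomial $R=N^2-16(1+z)^2q$ on two explicit subintervals, correctly tracking the direction of the inequality when squaring in each case and correctly identifying $z=-1/\sqrt2$ as a root of $R$ corresponding to $\psi_2'(-1/\sqrt2)=0$. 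The only incomplete step is the final root isolation for $R$, which you flag honestly; that is a finite, exactly certifiable computation (Sturm sequences), and it is no less rigorous than --- indeed strictly more rigorous than --- the paper's appeal to a plot of $\psi'$. In short: same decomposition, but your treatment of $\psi_2$ upgrades the paper's numerical observation to an algebraic criterion that could be verified exactly, at the cost of one routine computer-algebra step.
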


\begin{dem}

 Since it is defined piecewise and continuous, it will be enough to check this in each of intervals where it is divided. In terms of its derivatives this is equivalent to checking that,  $\psi'_1(z), \psi'_2(z), \psi'_3(z) \geq 0$ in their own intervals. Numerically, it is easy to observe that $\psi'_1(z)= 0$, while $ \psi'_2(z), \psi'_3(z) > 0$. The graphical representation of the derivatives of $\psi'(z)$ and $\Tilde{\psi}'(z)$ can be found in Fig. \ref{fig:graficas der psi(z)}. $\hfill\square$
\end{dem}

\begin{figure}[h]
    \centering
    \begin{subfigure}[b]{0.35\textwidth}
        \centering
        \includegraphics[width=\textwidth]{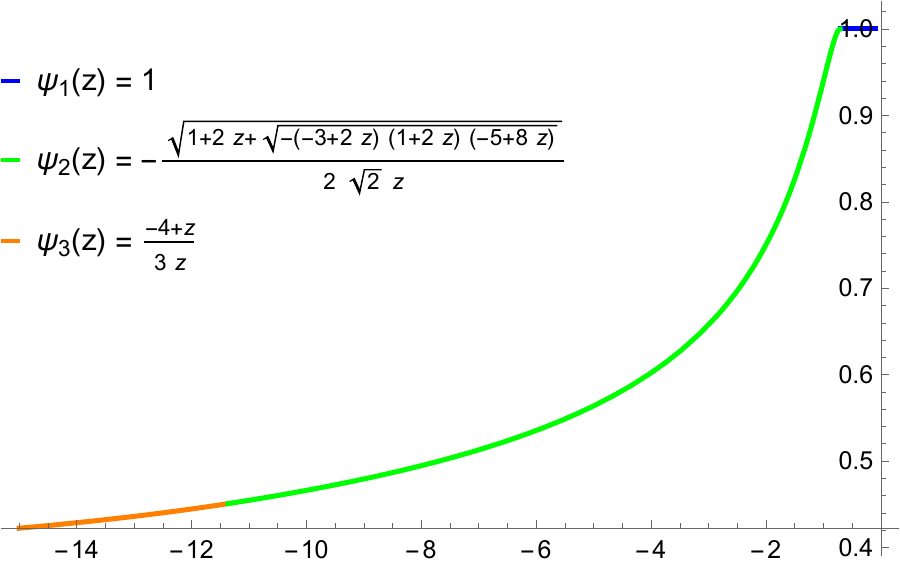}
        \caption{$\psi(z)$}
    \end{subfigure}
    \quad \quad
    \begin{subfigure}[b]{0.35\textwidth}
        \centering
        \includegraphics[width=\textwidth]{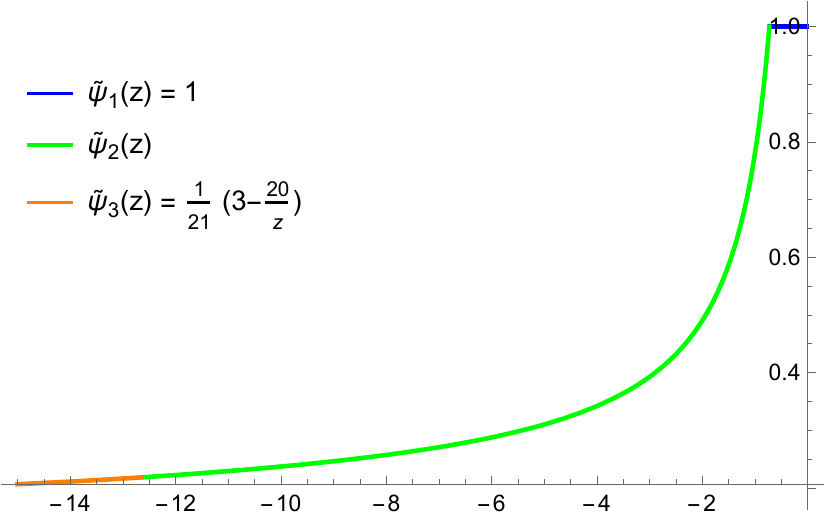}
        \caption{$\Tilde{\psi}(z)$}
    \end{subfigure}
    \caption{  Representation of $\psi(z)$ and $\Tilde{\psi}(z)$ functions. }
    \label{fig:graficas psi(z)}
\end{figure}
\begin{figure}[h]
    \centering
    \begin{subfigure}[b]{0.35\textwidth}
        \centering
        \includegraphics[width=\textwidth]{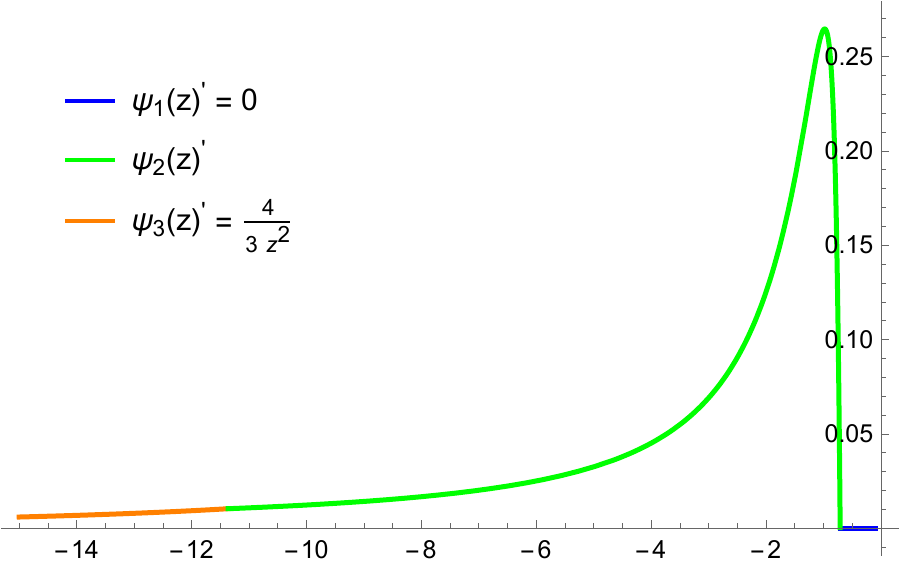}
        \caption{$\psi'(z)$}
    \end{subfigure}
    \quad \quad
    \begin{subfigure}[b]{0.35\textwidth}
        \centering
        \includegraphics[width=\textwidth]{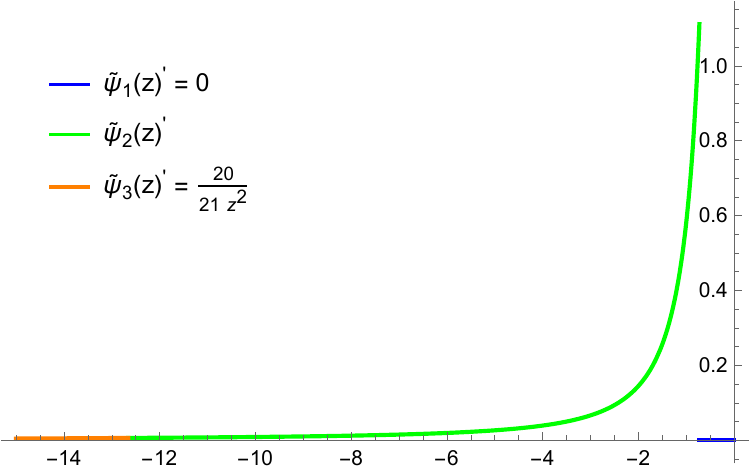}
        \caption{$\Tilde{\psi}'(z)$}
    \end{subfigure}
    \caption{  Representation of $\psi'(z)$ and $\Tilde{\psi}'(z)$ functions. }
    \label{fig:graficas der psi(z)}
\end{figure}

\vspace{\baselineskip}
\underline{IMEX BDF3}

 Reasoning in a similar way to the previous method, we arrive at the fact that the minimum function starts, in this case, from the equation of the curve
$$\mu_{BDF3}=\frac{2-9e^{i\theta}+18e^{2i\theta}-11e^{3i\theta}+6e^{3i\theta} z}{6(1-3e^{i\theta}+3e^{2i \theta})z}.$$
 $ \tilde{ \phi }(z,\theta)  := |\mu_{BDF3}(z,\theta)|^2 $ is calculated as previously, and also the points, $\theta\in [0 ,\pi]$, where $$\frac{\partial  \tilde{ \phi}(z,\theta)   }{ \partial \theta} = 0.$$

Thus, the function $ \tilde{ \psi(z) }:=\sigma_{z}$   has the following form:
\begin{equation}\label{Fun minimos IMEX BDF3}
\tilde{ \psi }(z) =
\left\{
\begin{aligned}
    \tilde{ \psi }_1(z) =1, & \quad \quad z\in \left[-0.722965,0 \right),\\
    \tilde{ \psi }_2(z), &  \quad  \quad z\in \left[-12.655874,-0.722965\right),\\
    \tilde{ \psi }_3(z) =\frac{1}{21}\left(3-\frac{20}{z}\right),  & \quad \quad z\in \left(-\infty,-12.655874\right).
\end{aligned}
\right.
\end{equation}
 Due to the complexity of the function $\tilde{ \psi}_2(z) $, we will not provide the full expression, but it is explained how to obtain it in file "2. IMEX BDF3 theory" at \href{https://github.com/anatb3/Article-Documentation.git}{https://github.com/anatb3/Article-Documentation.git}.  Again, the evaluation at the points $-0.722965$ and $-12.655874$ returns that $\tilde{ \psi }(z)$ is a continuous function, and studying the derivatives of $\tilde{ \psi}_i(z) $ lead us to the following result:

\begin{thm}\label{Non-decreasing minimum fun}
The function $\tilde{ \psi}(z) $ is non-decreasing for every value of $z<0$.
\end{thm}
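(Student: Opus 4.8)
The plan is to follow exactly the strategy used for Theorem~\ref{Non-decreasing psi} in the IMEX BDF2 case. Since $\tilde{\psi}(z)$ has already been shown to be continuous on $(-\infty,0)$ (via the evaluations at the breakpoints $-0.722965$ and $-12.655874$) and is defined piecewise on the three intervals $I_1=[-0.722965,0)$, $I_2=[-12.655874,-0.722965)$ and $I_3=(-\infty,-12.655874)$, it suffices to prove that $\tilde{\psi}$ is non-decreasing on each $I_k$ separately; continuity at the two junction points then glues the branches together and yields monotonicity on the whole half-line. Equivalently, I would verify that $\tilde{\psi}_k'(z)\ge 0$ on the interior of each $I_k$.

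The two outer branches are immediate. On $I_1$ we have $\tilde{\psi}_1(z)\equiv 1$, so $\tilde{\psi}_1'(z)=0$. On $I_3$, from $\tilde{\psi}_3(z)=\tfrac{1}{21}\bigl(3-\tfrac{20}{z}\bigr)$ we get $\tilde{\psi}_3'(z)=\tfrac{20}{21 z^{2}}>0$ for every $z<0$, so $\tilde{\psi}$ is in fact strictly increasing there. This already forces the whole function to be non-decreasing away from the middle interval.

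The remaining — and main — obstacle is the branch $\tilde{\psi}_2(z)$ on the bounded interval $I_2$, whose closed form (obtained by substituting the non-trivial critical angle $\theta(z)$ solving $\partial_\theta\tilde{\phi}(z,\theta)=0$ back into $\tilde{\phi}(z,\theta)=|\mu_{BDF3}(z,\theta)|^2$) is too unwieldy to differentiate and sign by hand. Here I would proceed as in the BDF2 case: compute $\tilde{\psi}_2'(z)$ symbolically (this expression is carried in the Mathematica notebook ``2. IMEX BDF3 theory''), reduce the sign condition on the compact interval $[-12.655874,-0.722965]$ to a polynomial inequality in $z$, and certify its positivity there — for instance by a Sturm-sequence / isolating-interval argument, or, if a clean polynomial reduction is not available, by a rigorous enclosure of $\tilde{\psi}_2'$ on that interval. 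The graphical evidence of $\tilde{\psi}'(z)$ in Fig.~\ref{fig:graficas der psi(z)} supplies the intuition, but the rigorous content of the proof is precisely the verification that $\tilde{\psi}_2'(z)>0$ on $I_2$. Once that is in place, combined with the two easy branches and the already-established continuity, $\tilde{\psi}(z)$ is non-decreasing on $(-\infty,0)$. $\hfill\square$
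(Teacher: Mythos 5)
Your proof follows essentially the same route as the paper's (which mirrors its proof of Theorem~\ref{Non-decreasing psi}): split $\tilde{\psi}$ into its three branches, use the already-established continuity at the breakpoints $-0.722965$ and $-12.655874$, and verify $\tilde{\psi}_k'\ge 0$ on each piece, with $\tilde{\psi}_1'=0$ and $\tilde{\psi}_3'(z)=\tfrac{20}{21z^{2}}>0$ immediate. If anything you are more careful than the paper on the middle branch: the paper settles for a numerical/graphical observation that $\tilde{\psi}_2'>0$ (Fig.~\ref{fig:graficas der psi(z)} and the accompanying Mathematica file), whereas your proposed Sturm-sequence or rigorous-enclosure certification on the compact interval $[-12.655874,-0.722965]$ would make that step fully rigorous.
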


\section{Linear stability of IMEX-BDF methods in case of simultaneous diagonalizability}\label{stab-simult-diag}

In this section we prove some sufficient conditions for the stability of IMEX BDF methods assuming that matrices $A$ and $B$ in equation \Eqref{eq:1:EDR_EstabilidadMétodo} are simultaneously diagonalizable. In order not to make the statements of propositions and theorems redundant, we will avoid repeating the validity of this assumption every time. Let us first introduce the set of the so-called generalized eigenvalues of the splitting $(A,B)$  \cite{MV-Pagano,LMM-Seib-Ros}:
\begin{equation*}\label{gen_eig_set}
 \bar{ \mu } (A,B):=\left\{\mu \in \mathbb{C}: \mu A v = B v, v \neq {\bf{0}}\right\}.
\end{equation*}
Note that the elements of $ \bar{\mu }(A,B)$ (when matrices  diagonalize simultaneously) are just the eigenvalues of $A^{-1}B$. Furthermore, from now on we use $\sigma(X)$ to denote the eigenvalues of a matrix $X$.

On another hand, we are also deriving two functions: $\chi(r)$ and $  \tilde{ \chi }(r) $, such that  $( \chi \circ  \psi )(z) = z$  and $( \tilde{ \chi } \circ \tilde{ \psi} )(z) = z$, respectively in the intervals $\left( - \infty,  -\frac{1}{\sqrt{2}} \right)$ and $( - \infty,  -0.722965)$.

\vspace{0.2cm}

\underline{Function $\chi(r)$, for the stability of IMEX BDF2}

\vspace{0.12cm}
To study the stability of IMEX BDF2, let us consider the function $\psi(z)$ given by equation (\ref{Fun minimos IMEX BDF2}), which by the Theorem \ref{Non-decreasing psi} we know is nondecreasing.  Additionally,  taking the limit when $z$ tends to minus infinity of the minimum function $\psi(z)$, specifically looking at $\psi_3(z)$ when $z \rightarrow -\infty$, we arrive at the minimum value that is reached in the curve of the stability region, that is $\frac{1}{3}$.

 Therefore, for a given $\frac{1}{3}< r \leq 1$, let us look for the minimum $z_m$ such that $\psi(z_m)=\sigma_{z_m}=r$.  Thus, we are looking for $z_{m}=\min_{z\in\R^{-}} \psi^{-1}(r)$.  Since $\psi$ is piecewise function, we need to divide its study in several intervals.  Simple calculations provide us the new function
\begin{equation}\label{chi-IMEX-BDF2}
\chi(r)= \left\{
\begin{aligned}
    -\frac{1}{\sqrt{2}} & \quad \quad \quad \quad r=1 \\
    \psi_2^{-1}(r)    & \quad \quad \frac{3}{31}(-1+4\sqrt{2}) < r< 1\\ 
    \frac{4}{1-3r} & \quad \quad  \frac{1}{3}< r \leq \frac{3}{31}(-1+4\sqrt{2}).
\end{aligned}
\right.
\end{equation}
As for the second interval, first let us remember that $\psi_2'(z) > 0$, and therefore $\psi_2^{-1}(r)$ is well defined and can be calculated, for example, with a Newton-Raphson algorithm solving $\psi(z_d)=r$.

\vspace{0.2cm}

\underline{Function $ \tilde{\chi}(r)$, for the stability of IMEX BDF3}

\vspace{0.1cm}
Similarly, by  Theorem \ref{Non-decreasing minimum fun}  we know that  $\tilde{\psi}(z)$  is nondecreasing.  Additionally,  taking the limit when $z$ tends to minus infinity of the minimum function $\tilde{\psi}$, simple calculations provide us:
$$ lim_{z \rightarrow -\infty }   \tilde{\psi}_3(z) =  \frac{1}{7} $$.

Hence, let us define
\begin{equation}\label{chi-IMEX-BDF3}
\tilde{ \chi}(r)= \left\{
\begin{aligned} -0.722965  &  \quad \quad  \quad \quad r=1 \\
 \tilde{\psi}_2^{-1}(r) &   \quad \quad   0.218109 <  r<1
\\
    - \frac{20}{3(7r-1)} &   \quad \quad \frac{1}{7}< r \leq 0.218109.
\end{aligned}
\right.
\end{equation}
Again $\tilde{\psi}_2^{-1}(r)$ is well defined and can be calculated with a Newton-Raphson method.

\vspace{0.2cm}
Now, we are ready to demonstrate the following theoretical results:

\subsection{ Sufficient conditions for linear stability: case matrices  diagonalize simultaneously }

The next proposition provides a sufficient condition for the stability of IMEX BDF methods when the one-to-one correspondence between the eigenvalues of $A$ and $B$ is known.

\begin{prop}\label{prop1}
 	Let $A v_i = \lambda_i v_i$, $B v_i = \gamma_i v_i$, with $i \in \{1,\ldots,d\}$, and $v_i \in \mathbb{C}^d$. Furthermore, let $\mu_i= \lambda_i^{-1} \gamma_i$. Given a value of the time step-size $h>0$, the IMEX BDF method is stable  if   $\mu_i \in D(0, \sigma_{z_i} ), \ \forall i=1,\ldots,d$ , where $z_i= -h \lambda_i$.

    Or equivalently, it is sufficient that
    $ h< \min_i \frac{  | \chi( | \mu_i |  ) |}{ \lambda_i}  $ for IMEX BDF 2, and
    $ h< \min_i \frac{  | \tilde{ \chi} ( | \mu_i |  ) |}{ \lambda_i}  $ for IMEX BDF 3.  If $|\mu_i  | \leq \frac{1}{3}, \  \forall i=1,\ldots,d$, then IMEX BDF 2 is unconditionally stable for the problem, and if additionally $|\mu_i  | \leq \frac{1}{7}, \  \forall i=1,\ldots,d$, then IMEX BDF 3 is unconditionally stable.
\end{prop}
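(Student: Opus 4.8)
The statement follows by reducing the system to a family of scalar characteristic equations, one per generalized eigenpair, and then invoking the scalar $P$-stability theory already developed (Lemma~\ref{Lemma-Koto}, Corollary~\ref{Prop D(0,sigma)cD_z}) together with the monotonicity of $\psi$ and $\tilde\psi$ (Theorems~\ref{Non-decreasing psi}, \ref{Non-decreasing minimum fun}) and the inverse functions $\chi$, $\tilde\chi$. First I would exploit simultaneous diagonalizability: let $V=[v_1,\ldots,v_d]$ so that $V^{-1}AV=\diag(\lambda_i)$ and $V^{-1}BV=\diag(\gamma_i)$. Writing the IMEX BDF recursion \Eqref{original IMEX-LMM} applied to \Eqref{eq:1:EDR_EstabilidadMétodo} in the coordinates $\tilde y_n = V^{-1} y_n$ decouples it into $d$ independent scalar recursions, each of which is exactly \Eqref{IMEX-LMM lineal} with $z=z_i=-h\lambda_i$ and $w = h\gamma_i = -z_i\mu_i$, i.e.\ the scalar test equation \Eqref{DDE eq LMM lineal2} with parameters $(\lambda_i,\mu_i)$. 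Hence the full method is (asymptotically) stable iff each scalar characteristic equation \Eqref{IMEX-LMM eq caract lineal} with $(z_i,\mu_i)$ is stable for the given $m$ (recall $m = \tau/h$).

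Next I would feed each scalar problem into the known stability characterization. Since $\lambda_i>0$ (as $A$ is positive definite and its eigenvalues are real positive) and $h>0$, each $z_i\in\R_{<0}$, so $z_i\in S_A$ because $(-\infty,0)$ lies in the $A$-stability region of implicit BDF2 and BDF3. Then Lemma~\ref{Lemma-Koto}(a)$\Rightarrow$(b) gives: if $|\mu_i|<\sigma_{z_i}$ then $(z_i,\mu_i)\in D_P$, in particular the characteristic equation is stable for that $m$; equivalently $\mu_i\in D(0,\sigma_{z_i})$, which is the first formulation. For the step-size form, use $\sigma_{z_i}=\psi(z_i)$ (resp.\ $\tilde\psi(z_i)$) and the fact that $\psi$ is nondecreasing: the condition $|\mu_i|<\psi(-h\lambda_i)$ is, by monotonicity and the definition of the inverse $\chi$ on $(\tfrac13,1]$ (resp.\ $\tilde\chi$ on $(\tfrac17,1]$), equivalent to $-h\lambda_i < \chi(|\mu_i|)$, i.e.\ $h < |\chi(|\mu_i|)|/\lambda_i$; taking the minimum over $i$ and noting that $h$ must work simultaneously for all eigenpairs yields the stated bound. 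For the unconditional part: if $|\mu_i|\le \tfrac13$ for all $i$, then since $\psi(z)\ge \lim_{z\to-\infty}\psi_3(z)=\tfrac13$ and $\psi$ is nondecreasing, we get $|\mu_i|\le \tfrac13 \le \psi(z_i)=\sigma_{z_i}$ for every $z_i<0$, so by Lemma~\ref{Lemma-Koto}(a)$\Rightarrow$(b) — or, at the boundary value, a direct check that $|\mu_i|=\tfrac13$ still gives $|\zeta|<1$ using Corollary~\ref{Prop D(0,sigma)cD_z} — the scalar equation is stable for all $z<0$ and all $m\ge0$, hence unconditionally so; the BDF3 case is identical with $\tfrac17$ and $\tilde\psi$.

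**Anticipated obstacle.** The genuinely delicate point is the boundary case $|\mu_i|$ equal to the infimum value ($\tfrac13$ or $\tfrac17$, or more generally $|\mu_i|=\sigma_{z_i}$), since Lemma~\ref{Lemma-Koto} only gives the clean implication for the \emph{strict} inequality $|\mu|<\sigma_z$, while for $|\mu|=\sigma_z$ it only yields $(z,\mu)\in \overline{D_P}$-type information. I would handle this by working with $D_z = \tilde D_z$, the closed innermost region of $\Gamma_z$ (Proposition~\ref{Prop internal region D_{z}}), and Corollary~\ref{Prop D(0,sigma)cD_z}, which gives $D(0,\sigma_{z})\subseteq D_z$ as a \emph{closed} disk inclusion; combined with $\sigma_{z}\ge\tfrac13$ for all $z<0$, the closed disk $D(0,\tfrac13)$ lies in $D_z$ for every $z$, so $\mu_i$ with $|\mu_i|\le\tfrac13$ gives a characteristic equation all of whose roots satisfy $|\zeta|\le 1$ — and then one must still argue (as in the proof of Proposition~\ref{Prop internal region D_{z}} via Cauchy's argument principle, or by noting strict containment in the interior) that no root lies \emph{on} the unit circle, ruling out the marginal $|\zeta|=1$ case. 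A secondary, more routine point to state carefully is that asymptotic stability of the decoupled system is genuinely equivalent to stability of each scalar component (the similarity transform $V$ does not affect which sequences tend to zero), and that the ``given $h$'' in the first sentence corresponds to a fixed integer $m=\tau/h$, so that we are reading off $D_z^m$ rather than the full intersection $D_P$.
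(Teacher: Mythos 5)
Your proposal is correct and takes essentially the same route as the paper: simultaneous diagonalization to decouple \Eqref{eq:1:EDR_EstabilidadMétodo} into scalar test equations with parameters $(z_i,\mu_i)$, then the inclusion $D(0,\sigma_{z_i})\subseteq D_{z_i}$ (Corollary~\ref{Prop D(0,sigma)cD_z}, via Lemma~\ref{Lemma-Koto}) for the first claim, and the monotonicity of $\psi,\tilde{\psi}$ together with the inverses $\chi,\tilde{\chi}$ and the bounds $\psi>\tfrac13$, $\tilde{\psi}>\tfrac17$ for the step-size and unconditional statements. Your additional care about the boundary case $|\mu_i|=\sigma_{z_i}$ is a refinement the paper's own proof glosses over, but it does not alter the argument.
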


\begin{proof}
    If  $A$ and $B$ diagonalize simultaneously, then it holds that
    \begin{equation*}
		A=Q \Lambda Q^{-1}, \quad \text{with} \quad \Lambda=\text{diag}(\lambda_i), \qquad B = Q \Gamma Q^{-1}, \quad \text{with} \quad \Gamma=\text{diag}(\gamma_i).
    \end{equation*}
    Setting $y=Q u $, the considered test problem \Eqref{eq:1:EDR_EstabilidadMétodo} turns into
    \begin{equation*}
	u'(t)=\Lambda u(t) +  \Gamma u(t-\tau).
    \end{equation*}
    We can then write it as a system of decoupled scalar equations of the form
    $u_i=\lambda_i u_i+\gamma_i u_i (t-\tau), \ i=1,\ldots,d$.  Therefore, it is sufficient and  necessary that  $\mu_i \in D_{z_i}, \ \forall i=1,\ldots,d$.  Since $D(0, \sigma_{z_i} ) \subseteq D_{z_i} $  the first part of the statement is obtained.

    As a consequence it is necessary that  $ | \mu_i | < \sigma_{z_i} = \sigma( - h \lambda_i ), \forall i=1,\ldots,d $.    The second part of the theorem is obtained by solving this formula with respect to $h$, and the fact that $\psi(z_i) > \frac{1}{3}$ and $\tilde{ \psi}(z_i)> \frac{1}{7} $ whenever $z_i < 0 $.
\end{proof}

\begin{lema}\label{Observation}
    Taking into account the previous theorem, if we take for the IMEX BDF2 and IMEX BDF3 methods values  $z_{2}<z_{1}<0$, then $\sigma_{z_{2}}\leq \sigma_{z_{1}}$. With the latter, we then arrive at that, in the mentioned procedures, $D(0,\sigma_{z_{2}})\subseteq D(0,\sigma_{z_{1}})$ is fulfilled for all $z_{2}<z_{1}<0$.

    \begin{proof}
        It is directly obtained from Theorems  \ref{Non-decreasing psi} and \ref{Non-decreasing minimum fun}, where it was demonstrated that functions $\psi$ and $\tilde{\psi}$ are non-decreasing.
    \end{proof}
\end{lema}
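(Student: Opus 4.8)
The statement has two parts: (i) for $z_2<z_1<0$ we have $\sigma_{z_2}\le\sigma_{z_1}$ for both the IMEX BDF2 and IMEX BDF3 methods, and (ii) consequently $D(0,\sigma_{z_2})\subseteq D(0,\sigma_{z_1})$. The second part is immediate once the first is known, since a disk centred at the origin is contained in another disk centred at the origin precisely when its radius is no larger; so all the work is in part (i).

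For part (i) the plan is simply to invoke the monotonicity results already established. Recall from Definition \ref{Def sigma_z} that $\sigma_z=\inf_{\mu\in\Gamma_z}|\mu|$, and that in Section \ref{Section:sigmaIMEXBDF} this quantity was identified explicitly as $\sigma_z=\psi(z)$ for IMEX BDF2 (equation \Eqref{Fun minimos IMEX BDF2}) and as $\sigma_z=\tilde\psi(z)$ for IMEX BDF3 (equation \Eqref{Fun minimos IMEX BDF3}). Theorem \ref{Non-decreasing psi} states that $\psi$ is non-decreasing on $[-\infty,0)$, and Theorem \ref{Non-decreasing minimum fun} states the same for $\tilde\psi$ on the same range. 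Hence for $z_2<z_1<0$ we get $\sigma_{z_2}=\psi(z_2)\le\psi(z_1)=\sigma_{z_1}$ in the BDF2 case, and likewise $\sigma_{z_2}=\tilde\psi(z_2)\le\tilde\psi(z_1)=\sigma_{z_1}$ in the BDF3 case. This is the whole argument for (i): it is a one-line deduction from the already-proved monotonicity of $\psi$ and $\tilde\psi$.

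For part (ii), take any $\mu\in D(0,\sigma_{z_2})$, so $|\mu|<\sigma_{z_2}$. By part (i), $\sigma_{z_2}\le\sigma_{z_1}$, hence $|\mu|<\sigma_{z_1}$, i.e. $\mu\in D(0,\sigma_{z_1})$. This gives the claimed inclusion $D(0,\sigma_{z_2})\subseteq D(0,\sigma_{z_1})$.

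There is essentially no obstacle here: the lemma is a corollary of Theorems \ref{Non-decreasing psi} and \ref{Non-decreasing minimum fun}, and the only thing to be careful about is the bookkeeping that $\sigma_z$ genuinely equals $\psi(z)$ (resp. $\tilde\psi(z)$) for all $z<0$, which was the content of the derivation in Section \ref{Section:sigmaIMEXBDF}. If one wanted to be fully self-contained one could instead argue geometrically — as $z$ decreases the curves $\Gamma_z$ shrink toward the limiting disk $D(0,\tfrac13)$ (resp. $D(0,\tfrac17)$), as already noted in the discussion around Figure \ref{fig:graficas IMEX BDF3 z=-1} — but the clean route is to cite the two monotonicity theorems directly, exactly as the authors do.
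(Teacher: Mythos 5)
Your proposal is correct and follows exactly the same route as the paper: the authors likewise deduce the lemma directly from Theorems \ref{Non-decreasing psi} and \ref{Non-decreasing minimum fun} on the monotonicity of $\psi$ and $\tilde{\psi}$, which give $\sigma_{z_2}\leq\sigma_{z_1}$, and the disk inclusion then follows immediately. You merely spell out the bookkeeping ($\sigma_z=\psi(z)$, resp. $\tilde{\psi}(z)$, and the trivial containment of concentric disks) that the paper leaves implicit.
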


As a consequence of Lemma \ref{Observation} and Proposition \ref{prop1}, we can obtain:

\begin{thm} \label{Sufficient-cond-stab-simultaneous}
Let $A v_i = \lambda_i v_i$, $0< \lambda_1 < \ldots <  \lambda_d$,  $B v_i = \gamma_i v_i$, with $i \in \{1,\ldots,d\}$, and $v \in \mathbb{C}^d$. Furthermore, let $\mu_i= \lambda_i^{-1} \gamma_i$. Given a value of the time step-size $h>0$, the IMEX BDF method is stable  if $\mu_i \in D(0, \sigma_{z_d} ), \ \forall i=1,\ldots,d$, where $z_d= -h \lambda_d$ .

 Or equivalently it is sufficient that
 $ h<  \min_i \frac{  | \chi( | \mu_i) | |}{ \lambda_d}  $ for IMEX BDF 2, and
   $ h< \min_i \frac{  | \tilde{ \chi} ( | \mu_i | ) |}{ \lambda_d}  $ for IMEX BDF 3.  Obviously,  if $|\mu_i  | \leq \frac{1}{3}, \  \forall i=1,\ldots,d$, then IMEX BDF 2 is unconditionally stable for the problem, and if additionally $|\mu_i  | \leq \frac{1}{7}, \  \forall i=1,\ldots,d$, then IMEX BDF 3 is unconditionally stable.
\end{thm}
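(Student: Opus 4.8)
The plan is to obtain the statement directly from Proposition~\ref{prop1}, using the monotonicity packaged in Lemma~\ref{Observation} to replace the $i$-dependent discs by the single disc $D(0,\sigma_{z_d})$. First I would recall the reduction from the proof of Proposition~\ref{prop1}: since $A$ and $B$ diagonalize simultaneously, the change of variables $y=Qu$ turns \Eqref{eq:1:EDR_EstabilidadMétodo} into the decoupled scalar system $u_i'(t)=\lambda_i u_i(t)+\gamma_i u_i(t-\tau)$, $i=1,\dots,d$, so the IMEX BDF method is stable if and only if $\mu_i\in D_{z_i}$ for every $i$, where $z_i=-h\lambda_i$. By Corollary~\ref{Prop D(0,sigma)cD_z} it is therefore sufficient that $\mu_i\in D(0,\sigma_{z_i})$ for all $i$.

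Next I would compare the radii $\sigma_{z_i}$. Because $0<\lambda_1<\dots<\lambda_d$ and $h>0$, we have $z_d=-h\lambda_d\le -h\lambda_i=z_i<0$ for every $i$, so Lemma~\ref{Observation} yields $D(0,\sigma_{z_d})\subseteq D(0,\sigma_{z_i})$ for all $i$. Hence the hypothesis $\mu_i\in D(0,\sigma_{z_d})$ for all $i$ implies $\mu_i\in D(0,\sigma_{z_i})$ for all $i$, and the previous paragraph gives stability. This establishes the first (disc) formulation of the theorem.

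It then remains to translate $\mu_i\in D(0,\sigma_{z_d})$, i.e. $|\mu_i|<\sigma_{z_d}=\psi(z_d)$ for IMEX BDF2 and $|\mu_i|<\tilde{\psi}(z_d)$ for IMEX BDF3, into a restriction on $h$. If $|\mu_i|\le\frac13$ for all $i$, then since $\psi$ is non-decreasing on $[-\infty,0)$ by Theorem~\ref{Non-decreasing psi} with $\psi_3(z)\to\frac13$ as $z\to-\infty$, we get $\psi(z)>\frac13$ for every $z<0$; thus $|\mu_i|<\sigma_{z_d}$ holds for every $h>0$ and every $m\ge 0$, which is unconditional stability, and the extra hypothesis $|\mu_i|\le\frac17$ gives the IMEX BDF3 case via Theorem~\ref{Non-decreasing minimum fun}. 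Otherwise, for each $i$ with $|\mu_i|\in\left(\frac13,1\right]$ I would use that $\chi$ is, by construction, the inverse branch of $\psi$ on $\left(-\infty,-\frac{1}{\sqrt 2}\right)$ (and $\tilde{\chi}$ of $\tilde{\psi}$ on $(-\infty,-0.722965)$): since $\psi$ is strictly increasing below $-\frac1{\sqrt2}$ and constant equal to $1$ above it, the inequality $\psi(-h\lambda_d)>|\mu_i|$ is equivalent to $-h\lambda_d>\chi(|\mu_i|)$, that is $h<|\chi(|\mu_i|)|/\lambda_d$; intersecting over $i$ gives $h<\min_i |\chi(|\mu_i|)|/\lambda_d$, and the IMEX BDF3 bound follows identically with $\tilde{\chi}$.

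The genuinely routine parts here are the diagonalization step and the monotonicity bookkeeping; the one point that requires care is the flat piece where $\psi$ (respectively $\tilde{\psi}$) equals $1$, so that the inversion through $\chi$ is meaningful only for $|\mu_i|$ strictly below $1$ and one must track whether each inequality is strict — the borderline case $|\mu_i|=1$ is exactly the one not covered by the a)~$\Rightarrow$~b) direction of Lemma~\ref{Lemma-Koto} and must be handled as a degenerate endpoint.
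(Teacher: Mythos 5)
Your proposal is correct and follows essentially the same route as the paper, which states Theorem \ref{Sufficient-cond-stab-simultaneous} directly ``as a consequence of Lemma \ref{Observation} and Proposition \ref{prop1}'' without writing out the details; you simply make explicit the containment $D(0,\sigma_{z_d})\subseteq D(0,\sigma_{z_i})$ coming from the monotonicity of $\psi$ and $\tilde{\psi}$, and the inversion through $\chi$ and $\tilde{\chi}$ that yields the step-size bound. Your caveat about the flat piece where $\psi\equiv 1$ and the borderline case $|\mu_i|=1$ is a fair observation that the paper itself glosses over.
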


\subsection{ Example: linear system where matrices simultaneously diagonalize }

\begin{ej}
\textit{Stability restrictions of  IMEX BDF2 and IMEX BDF3 methods applied to the linear system:}
\begin{equation*}
\left\{
\begin{aligned}
& y'(t)=-A\ y(t)+B\ y(t-1)+\tilde{f}(t) & t\geq 0\\
& y(t) = \begin{pmatrix}
e^{-t} \\
\sin(t) \\
2t^{2} \\
1+t
\end{pmatrix}
& t\leq 0
\end{aligned}
\right.
\end{equation*}
\textit{where the matrices are defined as}
$$
A=\begin{pmatrix}
39 & -27 & -9 & 5 \\
9 & 3 & -9 & 5 \\
22 & -27 & 8 & 5 \\
9 & 0 & -9 & 8
\end{pmatrix},
\quad \quad
B=\begin{pmatrix}
8 & -2 & -4 & 5 \\
4 & 2 & -4 & 5 \\
-3 & -2 & 7 & 5 \\
4 & 0 & -4 & 7
\end{pmatrix}.
$$
\textit{and $\Tilde{f}$ is such that $y(t) =(e^{-t}, \sin(t), 2t^{2}, 1+t)^{T}$ is the solution of the differential equation with delay.}
\end{ej}

\begin{sol}
Let us first calculate the definition of the function $\Tilde{f}(t)$ that appears in the delay differential equation of the problem. Knowing that $\tilde{f}(t)=y'(t)+Ay(t)-By(t-1)$ and the real solution of the DDE, we have that in this particular example:
\begin{equation*}
\begin{aligned}
\Tilde{f}(t)=& \left(e^{-t} \left(-e^t (2 t (5 t+8)+2 \sin (1-t)+27 \sin (t)-13)-8 e+38\right)\right., \\ & e^{-t} \left(e^t (-2 t (5 t+8)+2 \sin (1-t)+3 \sin (t)+\cos (t)+13)-4 e+9\right), \\ & (22+3 e) e^{-t}+2 t (t+16)-2 \sin (1-t)-27 \sin (t)-9, \\ & \left.e^{-t} \left(e^t (17-5 t (2 t+3))-4 e+9\right) \right).
\end{aligned}
\end{equation*}

Thus, the IMEX BDF2 and IMEX BDF3 methods, generally expressed by the equations (\ref{IMEX-BDF2 general})-(\ref{IMEX-BDF3 general}), when applied to this linear system, will result in
\begin{equation}\label{IMEX BDF2 ejemplos}
\frac{3}{2}y_{n+1}-2y_{n}+\frac{1}{2}y_{n-1}=h(A y_{n+1}+\Tilde{f}_{n+1}+B(2y_{n-m}-y_{n-1-m})),
\end{equation}
\begin{equation}\label{IMEX BDF3 ejemplos}
\frac{11}{6}y_{n+1}-3y_{n}+\frac{3}{2}y_{n-1}-\frac{1}{3}y_{n-2}=h(Ay_{n+1}+\Tilde{f}_{n+1}+B(3y_{n-m}-3y_{n-1-m}+y_{n-2-m})).
\end{equation}

\vspace{\baselineskip}
Now, let us check, in order to use our theory, whether the matrices verify being simultaneously diagonalizable or not. By definition we know that two matrices are simultaneously diagonalizable if there exists a diagonal matrix, $P$, such that $PAP^{-1}$ and $PBP^{-1}$ are diagonal matrices. We also know that two matrices diagonalize simultaneously if, and only if, they commute. Taking into account that, in this problem, $AB=BA$, it is possible to confirm that $A$ and $B$ commute and thus are simultaneously diagonalizable, and consequently Proposition \ref{prop1} can be used.

In this sense, denoting $\mu_{i}=\lambda_{i}^{-1}\gamma_{i}$ for every $i=1,2,3,4$ and considering the equations (\ref{chi-IMEX-BDF2}) and (\ref{chi-IMEX-BDF3}) for the methods IMEX BDF2 and IMEX BDF3, respectively, we get the step size restrictions for this delay differential equation. These are $h<h^{*}=0.157609$ for the first method mentioned and $h<h^{*}=0.0760254$ for the second, and henceforth it is possible to calculate the errors made when solving the DDE. Table \ref{tab:Autonomous error table 1} shows these errors, committed at time $t_{e}=500$, for different step sizes. Calculations for this exercise can be found in file "3. Example 1" (and in the respective files for the rest of the exercises) located in folder "Codes article" at \href{https://github.com/anatb3/Article-Documentation.git}{https://github.com/anatb3/Article-Documentation.git}.

\begin{table}[H]
\centering
\resizebox{\textwidth}{!}{%
\begin{tabular}{| c | c | c |} \hline & IMEX BDF2 & IMEX BDF3\\ \hline
$ h=0.5 $ & $ (4.8793\cdot 10^{44}, 4.8793\cdot 10^{44}, 4.8793\cdot 10^{44},4.8793\cdot 10^{44}) $ & $(4.5553\cdot 10^{119}, 4.5553\cdot 10^{119}, 4.5718\cdot 10^{119},4.5553\cdot 10^{119}  )$
\\ $h=0.25$ & $(1.3483\cdot 10^{21}, 1.3483\cdot 10^{21}, 1.3483\cdot 10^{21},1.3483\cdot 10^{21}  )$ & $(2.0328\cdot 10^{98}, 2.0328\cdot 10^{98}, 8.4992\cdot 10^{83},2.0328\cdot 10^{98} )$
 \\ $h=0.1$ & $(2.0087\cdot 10^{-1}, 1.9977\cdot 10^{-1}, 2.7420\cdot 10^{-1},2.0660\cdot 10^{-1})$ & $(5.0025\cdot 10^{22}, 5.0025\cdot 10^{22}, 4.4540 \cdot 10^{7},5.0025\cdot 10^{22})$
 \\ $h=0.05$ & $ ( 5.0196\cdot 10^{-2},4.9934\cdot 10^{2}, 6.8529\cdot 10^{-2}, 5.1666\cdot 10^{-2})$ & $ (1.5604 \cdot 10^{-5}, 2.2534\cdot 10^{-6}, 1.5605 \cdot 10^{-5}, 4.5111\cdot 10^{-10})$
 \\ $h=0.025$ & $ (1.2547\cdot 10^{-2}, 1.2483\cdot 10^{-2}, 1.7130\cdot 10^{-2}, 1.2917\cdot 10^{-2} )$ & $ (1.6232 \cdot 10^{-6}, 1.0514 \cdot 10^{-7}, 1.6233\cdot 10^{-6}, 5.0204\cdot 10^{-10} )$
 \\ $h=0.01$ & $(2.0070 \cdot 10^{-3}, 1.9973 \cdot 10^{-3}, 2.7413 \cdot 10^{-3}, 2.0666\cdot 10^{-3} )$ & $ (9.6173 \cdot 10^{-8}, 1.6631 \cdot 10^{-8}, 9.7265\cdot 10^{-8}, 4.3110\cdot 10^{-9} )$
 \\ $h=0.005$ & $ (5.0182 \cdot 10^{-4}, 4.9932 \cdot 10^{-4}, 6.8591 \cdot 10^{-4}, 5.1666\cdot 10^{-4} )$ & $ ( 5.3611 \cdot 10^{-9}, 1.9544 \cdot 10^{-8}, 8.6147 \cdot 10^{-9}, 1.6254\cdot 10^{-8}  )$
\\ \hline
Conv. Rate &  2.00005  & 2.90874
\\ \hline
\end{tabular}
}
\caption{Errors committed at time $t_{e}=500$ for $y(t) =(e^{-t}, \sin(t), 2t^{2}, 1+t)^{T}$.}
\label{tab:Autonomous error table 1}
\end{table}

The numerical results obtained using both methods approximate very well the solution of the delay differential equation. This can be easily seen in the decay of the errors when reducing the step size and also by the similarity between the estimated convergence rates and the actual rates of each method, which we know are $2$ and $3$, for the IMEX BDF2 and IMEX BDF3, respectively. The mentioned estimation of the numerical convergence rates is calculated for each method as
\begin{equation}\label{Conv. rate}
    \log_{\frac{h_{1}}{h_{2}} }  \left( \frac{ err_{h_{1}} }{ err_{h_{2}} } \right)
\end{equation}
where $err_{h_i}$ refers to the norm of the error for the step length $h_i$. We have used in this case $h_{1}=0.05$ because it is the largest step length for which both methods are stable, and $h_{2}=0.005$.

\end{sol}

\begin{obs}  \label{Remark-cond-stab}
Theorem \ref{Sufficient-cond-stab-simultaneous} provides a more restrictive condition than Proposition \ref{prop1} when talking about the step size. We know that the largest $h$ we can use in this example for stability is $h<h^{*}=0.157609$ for the IMEX BDF2 and $h<h^{*}=0.0760254$ for the IMEX BDF3. For both methods, $\mu_{i}\in D(0, \sigma_{z_{i}})\ \forall i=1,2,3,4$. Actually, $\mu_{2}$ lies on the boundary of the region mentioned for the first method cited, and also $\mu_{3}$, but referring now to the second method. Therefore, given the eigenvalues of A, $(\lambda_{1},\lambda_{2},\lambda_{3},\lambda_{4})=(3,8,17,30)$, the one that leads to the time restriction using Proposition \ref{prop1} is $\lambda_{2}$ for the IMEX BDF2 method and $\lambda_{3}$ for the IMEX BDF3 method, and not $\lambda_{4}$ as Theorem \ref{Sufficient-cond-stab-simultaneous} states. Consequently, the new restrictions given by the latest Theorem are $h<h^{*}_{new}=0.0420291$ and $h<h^{*}_{new}=0.0285391$, respectively, which implies choosing $h^{*}_{new}=\frac{4}{15}h^{*}$ for the IMEX BDF2 method and $h^{*}_{new}=\frac{17}{30}h^{*}$ for the IMEX BDF3 method, to ensure stability.

\vspace{0.1cm}
However, on practical problems coming from PDDEs the use of Theorem \ref{Sufficient-cond-stab-simultaneous} is much simpler than Proposition \ref{prop1}.  Additionally, we are not able to provide a result equivalent to  Proposition \ref{prop1} when $A$ and $B$ do not diagonalize simultaneously, but in the following Section we will obtain theorems equivalent to Theorem \ref{Sufficient-cond-stab-simultaneous}.
\end{obs}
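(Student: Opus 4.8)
\noindent\emph{Sketch of proof.} Every quantitative claim in the remark is obtained by carrying out, for the matrices of Example~1, the stability computations prescribed by Proposition~\ref{prop1} and by Theorem~\ref{Sufficient-cond-stab-simultaneous}, and then comparing the two bounds; the closing sentences (easier application to PDDEs, and the absence of a Proposition~\ref{prop1}-type statement without simultaneous diagonalizability) are methodological remarks requiring no proof. The plan is as follows. First I would use the fact, already verified in the Solution, that $AB=BA$ with both matrices diagonalizable, so they share an eigenbasis $v_1,\dots,v_4$; computing it gives $\sigma(A)=\{3,8,17,30\}$, written in increasing order as $\lambda_1<\dots<\lambda_4$, together with the eigenvalue $\gamma_i$ of $B$ attached to $v_i$, hence $\mu_i=\lambda_i^{-1}\gamma_i$ and the four moduli $|\mu_i|$. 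It is essential to compute the eigenvectors, not merely the two spectra, because the pairing $\lambda_i\leftrightarrow\gamma_i$ is what determines $\mu_i$. One records in passing that each $|\mu_i|>\tfrac13$ and at least one $|\mu_i|>\tfrac17$, so neither method is in the unconditionally stable regime and the functions $\chi,\tilde\chi$ of \eqref{chi-IMEX-BDF2}--\eqref{chi-IMEX-BDF3} return finite values.

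Next I would evaluate $\chi$ (for IMEX BDF2) and $\tilde\chi$ (for IMEX BDF3) at each $|\mu_i|$ through their piecewise formulas; when $|\mu_i|$ lies in the middle branch, $\psi_2^{-1}$ (resp.\ $\tilde\psi_2^{-1}$) is computed by a Newton iteration on $\psi(z)=|\mu_i|$, which is well posed since $\psi_2'>0$ and $\tilde\psi_2'>0$ by Theorems~\ref{Non-decreasing psi} and~\ref{Non-decreasing minimum fun}. By Proposition~\ref{prop1} the sharp sufficient step size is $h^{*}=\min_i |\chi(|\mu_i|)|/\lambda_i$ for IMEX BDF2 and the $\tilde\chi$ analogue for IMEX BDF3; a four-term comparison should show the minimum attained at $i=2$, giving $h^{*}=|\chi(|\mu_2|)|/\lambda_2=0.157609$ for IMEX BDF2, and at $i=3$, giving $h^{*}=|\tilde\chi(|\mu_3|)|/\lambda_3=0.0760254$ for IMEX BDF3. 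Since $\chi=\psi^{-1}$ on the relevant interval, this says that at $h=h^{*}$ one has $\psi(-h^{*}\lambda_2)=|\mu_2|$, i.e.\ $\mu_2$ sits on $\partial D(0,\sigma_{z_2})$ (and likewise $\mu_3$ for IMEX BDF3), while the strict inequalities at the other indices place those $\mu_i$ in the interior of $D(0,\sigma_{z_i})$; by Lemma~\ref{Observation} all four inclusions are preserved (strictly for $h<h^{*}$), so $\mu_i\in D(0,\sigma_{z_i})$ for all $i$ at the critical step size.

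Finally I would apply Theorem~\ref{Sufficient-cond-stab-simultaneous}, whose bound is $h^{*}_{\mathrm{new}}=\bigl(\min_i |\chi(|\mu_i|)|\bigr)/\lambda_d$ with $\lambda_d=\lambda_4=30$ (and the $\tilde\chi$ analogue). Writing $k$ for the index active in the Proposition bound, $h^{*}_{\mathrm{new}}\le|\chi(|\mu_k|)|/\lambda_d<|\chi(|\mu_k|)|/\lambda_k=h^{*}$ because $\lambda_k<\lambda_d$ here, which is the claimed extra restrictiveness. For IMEX BDF2 the minimizing index is still $i=2$, so $h^{*}_{\mathrm{new}}=|\chi(|\mu_2|)|/\lambda_4=(\lambda_2/\lambda_4)h^{*}=\tfrac{8}{30}h^{*}=\tfrac{4}{15}h^{*}=0.0420291$. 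The step I expect to be the main obstacle is exactly this bookkeeping of the active index: replacing the denominators $\lambda_i$ (Proposition~\ref{prop1}) by the common $\lambda_d$ (Theorem~\ref{Sufficient-cond-stab-simultaneous}) can reorder the four candidates $|\tilde\chi(|\mu_i|)|/\lambda_d$, so for IMEX BDF3 the index minimizing $|\tilde\chi(|\mu_i|)|$ need not be the one that minimized $|\tilde\chi(|\mu_i|)|/\lambda_i$; hence $h^{*}_{\mathrm{new}}=\min_i |\tilde\chi(|\mu_i|)|/\lambda_4=0.0285391$ must be read off the full comparison rather than from a shortcut, and one should double-check that the factor quoted as $\tfrac{17}{30}$ for IMEX BDF3 is consistent with it, since $\tfrac{17}{30}h^{*}\approx0.0431\neq0.0285391$ indicates that in the Theorem bound the active index for IMEX BDF3 is not $i=3$.
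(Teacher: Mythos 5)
Your proposal is correct and follows essentially the same route as the paper: the remark is justified purely by carrying out, on the spectral data of Example~1 (eigenbasis, paired $\lambda_i,\gamma_i$, hence $\mu_i$), the step-size formulas of Proposition~\ref{prop1} and Theorem~\ref{Sufficient-cond-stab-simultaneous} and comparing the active indices; the paper offers no further argument beyond these computations (delegated to the linked code). Your closing observation is a genuine catch rather than a gap in your own reasoning: since $\tfrac{17}{30}\cdot 0.0760254\approx 0.0431\neq 0.0285391$, the remark as printed is internally inconsistent for IMEX BDF3 --- either the factor $\tfrac{17}{30}$ (which presumes the index minimizing $|\tilde\chi(|\mu_i|)|/\lambda_d$ is still $i=3$) or the quoted value $0.0285391$ must be wrong, and your point that replacing the denominators $\lambda_i$ by the common $\lambda_d$ can reorder the candidates is exactly the right diagnosis.
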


 \section{Stability of IMEX-BDF methods in the general case}\label{stab-general-case}

 \subsection{Field of values and numerical radius of a matrix}\label{field-values}

Now, let us consider the case of a linear differential system of equations with delay, that is:
\begin{equation}\label{DDE eq LMM autonomous} \left\{ \begin{aligned} & y'(t)=-Ay(t) +By(t-\tau) & t\geq t_{0}\\ & y(t) = \phi(t) & t\leq t_{0}.
 \end{aligned} \right.
\end{equation} with $-A, B \in Mat_{d\times d}(\C)$.  We will impose that $-A$ be a negative definite Hermitian matrix, that is, its transposed conjugate satisfies $(-A)^{*}=-A$, and, in addition, $\langle x, (-A)x \rangle <0$ for all $x\neq 0, \ x \in \C^{d}$.

\vspace{0.1cm}
To analyze the stability of IMEX-BDF methods in general cases, we will exploit some connections between the regions analyzed in the scalar case and the Field Of Values (FOV) of a matrix $X$ \cite[Ch. I]{Horn-fieldofvalues}, which is defined as
	\begin{equation}\label{FOVdef}
	  F(X):=\left\{\langle \vec{x}, X \vec{x} \rangle : ||\vec{x}||^2 = 1, \ \vec{x} \in \mathbb{C}^d \right\}.
	\end{equation}
	In particular, we consider the set
	\begin{equation}\label{FOVexpr}
	       F_q(A,B):=\left\{\langle \vec{v}, A^{q-1}B\vec{v} \rangle : \langle \vec{v}, A^{q}\vec{v} \rangle = 1, \ \vec{v} \in \mathbb{C}^d, \ q\in \mathbb{R}\right\}.
	\end{equation}
	With $\vec{v}=A^{-\frac{q}{2}}\vec{x}$, it can be written as $ F_q(A,B)= F (A^{\frac{q}{2}-1} B A^{-\frac{q}{2}})$,
	where $F(X)$ is exactly the FOV defined in \eqref{FOVdef}. Note that we are considering $A$ (which is positive definite) to make the definition \eqref{FOVexpr} well-posed.
	
Using the change of variable $v=A^{-\frac{p}{2}}x$, we have that $$\langle v, A^{p-1}Bv \rangle=\langle A^{-\frac{p}{2}}x,A^{p-1}BA^{-\frac{p}{2}}x\rangle=(A^{-\frac{p}{2}}x)^{ *}A^{p-1}BA^{-\frac{p}{2}}x=x^{*}(A^{-\frac{p}{2}})^{*}A^{p-1}BA^{-\frac{p}{2}}x=$$ $$\underset{\underset{\text{A Hermitian}}{\uparrow}}{=}x^{*}A^{-\frac{p}{2}}A^{p-1}BA^{-\frac{p}{2}}x = x^{*}A^{\frac{p}{2}-1}B A^{-\frac{p}{2}} x = \langle x, A^{\frac{p}{2}-1}B A^{-\frac{p}{2}} x\rangle,$$
and hence, it is possible to rewrite the set $F_{p}$ as
$$F_{p}=F(A^{\frac{p}{2}-1}BA^{-\frac{p}{2}})=\left\{ \langle x,A^{\frac{p}{2}-1}BA^{-\frac{p}{2}}x\rangle \in \C \ / \ x\in \C^{d}, \ \|x\|=1 \right\}$$
being $F(A^{\frac{p}{2}-1}BA^{-\frac{p}{2}})$ the \textit{field of values of the matrix} $A^{\frac{p}{2}-1}BA^{-\frac{p}{2}}$.

\vspace{0.2cm}
The connections between the stability regions and the FOV have already been exploited in \cite{LMM-Seib-Ros} for the study of the unconditional stability of IMEX-BDF methods for PDEs without delay. In this section, we take inspiration from this study to analyze the conditional and unconditional stability of our IMEX-BDF methods when there is a delay.
	
Let us recall the analytic and geometric properties of the FOV that we will use.
\begin{remark}\label{remFOV}
Let $X$, $X_1$, $X_2$, be square matrices of size $d$, and $w_1$, $w_2 \ \in \mathbb{C}$:
\begin{enumerate}[label=(\alph*)]
    \item Compactness and convexity: $F(X)$ is a compact and convex set.   
    \item Spectral containment: \ $\sigma(X)\subset F(X)$.
    \item $ F ( X_1) + F( X_2) = \left\{x_1+x_2: x_1 \in F(X_1),\ x_2 \in F(X_2)\right\} $;
    \item $F( X_1) F( X_2) = \left\{x_1x_2: x_1 \in F(X_1),\ x_2 \in F(X_2)\right\} $;
    \item $F(w_1 X_1+w_2 I)=w_1 F(X_1)+w_2 \ \ $ \cite[p. 9, Properties 1.2.3 and 1.2.4]{Horn-fieldofvalues};
    \item $X$ is Hermitian if, and only if, $F(X)$ is a closed segment of the real line. 
    \item if $X_1$ is normal, then  $F(X_1) = \text{co}(\sigma(X_1))$ \cite[p. 11, Property 1.2.9]{Horn-fieldofvalues};  co$(\sigma(X))$ denotes the convex hull of the eigenvalues of the matrix $X$;
    \item as a direct consequence of \cite[Corollary 1.7.7]{Horn-fieldofvalues}, if $X_1$ is positive semi-definite, then $\sigma(X_1 X_2) \subseteq F(X_1) F(X_2)$.
\end{enumerate}

There exist several codes to efficiently compute the FOV; including the MATLAB \texttt{chebfun} routine \cite{chebfun}, based on a classical algorithm proposed by Johnson \cite{JohnsonSINUM1978}, but also there are similar codes in Mathematica, for example.
\end{remark}

Since the ``size'' of the field of values $F(\cdot)$ is of interest, because it can be used to know certain properties of the matrix considered, it is necessary to define the following notion: 
\begin{defn}\label{Def numerical radius}
    The numerical radius of a matrix $X \in Mat_{d\times d}(\C)$ is
    $$r(X)=\max\left\{ |c| \ / \ c\in F(X)\right\}.$$
\end{defn}
In this sense, the numerical radius of a given matrix will be, by definition, included in the circle centered at the origin of the complex plane, of radius the numerical radius of the same, that is, $F(\cdot)\subseteq D(0,r(\cdot))$, thus delimiting its size.

Likewise, it is also important to take into account the following definition:
\begin{defn}\label{Def spectral radius}
    We will call the spectral radius of a matrix $X \in Mat_{d\times d}(\C)$
    $$\rho(X)=\max\left\{|\lambda| \ / \ \lambda\in\sigma(X)\right\}.$$
\end{defn}

\vspace{0.1cm}

Taking all of that into account, the IMEX method (\ref{original IMEX-LMM}) applied now to the previous DDE \Eqref{DDE eq LMM autonomous} will be translated into:
\begin{equation}\label{autonomous IMEX-LMM}
\sum_{j=0}^{s} \alpha_{j} y_{n+j}=h\sum_{j=0}^{s}\beta_{j}(-A)y_{n+j}+h\sum_{j=0}^{s -1}\beta_{j}^{*} By_{n+j-m}
\end{equation}

However, taking the vector $Y_{n+s-1}=(y_{n+s-1},. ..,y_{n-m})$, it is possible to rewrite this multi-step method in the form $Y_{n+s}=W\cdot Y_{n+s-1}$, where the square matrix $W$ of dimension $s+m$ is defined as:
\begin{equation*} W={
   \begin{pmatrix} I\alpha_{s}-h(-A)\beta_{s }& 0 & 0 & \cdots & 0 & 0\\ 0 & I & 0 & \cdots & 0 & 0\\ 0 & 0 & \ddots & & & 0\\ \vdots & \vdots & & I & & \vdots \\ \vdots & \vdots & & & \ddots & 0\\ 0 & 0 & \cdots & \cdots & 0 & I
   \end{pmatrix}}^{-1}\cdot (W_{1}+ W_{2}+W_{3}),
   \end{equation*}
   being $I=I_{d}$, $0 = 0_d$ the null matrix with dimension $d$, and
   \begin{equation*} W_{1}=
   \begin{pmatrix} h(-A)\beta _{s-1}-I\alpha _{s-1} & h(- A)\beta _{s-2}-I\alpha _{s-2} & \cdots & h(-A)\beta _{0}-I\alpha _{0} & 0 & \overset{m}{\cdots } & 0\\ 0 & 0 & \cdots & 0 & 0 & \cdots & 0\\ \vdots & \vdots & & \vdots & \vdots & & \vdots\\ 0 & 0 & \cdots & 0 & 0 & \cdots & 0
 \end{pmatrix},
 \end{equation*}
 $$ W_{2}=\begin{pmatrix} 0 & \overset{m}{\cdots} & 0 & hB\beta^{*}_{s-1 } & hB\beta^{*}_{s-2} & \cdots & hB\beta^{*}_{0}\\ 0 & \cdots & 0 & 0 & 0 & \cdots & 0\\ \vdots & & \vdots & \vdots & \vdots & & \vdots\\ 0 & \cdots & 0 & 0 & 0 & \cdots & 0 \end{pmatrix},
  \quad \quad
  W_{3}=\begin{pmatrix} 0 & 0 & \cdots & \cdots & 0 & 0 \\ I & 0 & \cdots & \cdots & 0 & 0 \\ 0 & \ddots & & & \vdots & \vdots \\ \vdots & & I & & \vdots & \vdots \\ \vdots & & & \ddots & 0 & 0\\ 0 & \cdots & \cdots & 0 & I & 0 \end{pmatrix}.
$$

Let $V$ be an eigenvector of $W$ with eigenvalue $\xi$. Due to the shape of the matrix $W$, it is verified that, for some $v\neq 0, \ v\in \C^{d}$, the eigenvector can be expressed as:
$$V=\left(\xi^{s+m-1}v, \xi^{s+m-2},\cdots,\xi v, v\right)^{T} \ \in \C^{(s+m)d}$$

In turn, it is possible to write the characteristic equation of $W$ in the form:

$$\det(W-\xi I)=0 \ \Longleftrightarrow \ \det\left(\xi^{m}\left(\frac{1}{h}\rho(\xi)I-\sigma^{*}(\xi)(-A)\right)-\sigma(\xi)B\right)=0$$
where $\rho(\xi), \sigma(\xi), \sigma^{*}(\xi)$ are defined in (\ref{polinomios IMEX}).

\vspace{0.1cm}
With all this we can affirm that, if $\xi$ is an eigenvalue of $W$, then there will always exist an eigenvector $V$ (whose structure we have previously defined), such that:
$$T(\xi)v=0, \quad \text{with} \quad T(x)=x^{m}\left(\frac{1}{h}\rho(x)I-\sigma(x)(-A)\right)-\sigma^{*}(x)B.$$

We have thus obtained a characterization of the stability of the method (\ref{autonomous IMEX-LMM}) from the characteristic equation of the matrix $W$, which is contained in the following proposition:
\begin{prop}
(see \emph{\cite[Proposition 1]{Art-Alejandro},\cite[Proposition 2]{LMM-Seib-Ros}).} If the matrix $T(x)$ is non-singular for all $|x|\geq 1$, then the method (\ref{autonomous IMEX-LMM}) is stable.
\end{prop}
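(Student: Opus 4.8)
The plan is to show that if $T(x)$ is non-singular for all $|x|\ge 1$, then every eigenvalue $\xi$ of the companion matrix $W$ satisfies $|\xi|<1$, which gives stability of the recurrence $Y_{n+s}=W\cdot Y_{n+s-1}$ in the usual power-boundedness sense. The starting point is the characterization already derived in the excerpt: $\xi$ is an eigenvalue of $W$ if and only if there exists $v\neq 0$ with $T(\xi)v=0$, i.e.\ $\det T(\xi)=0$ (after accounting for the $\xi^m$ prefactor and the fact that $\xi=0$ is harmless because the method is explicit in the delayed term). First I would argue the contrapositive: suppose $\xi$ is an eigenvalue of $W$ with $|\xi|\ge 1$. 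Then $T(\xi)$ is singular by the displayed equivalence, contradicting the hypothesis. Hence all eigenvalues of $W$ lie strictly inside the unit disk.

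The remaining subtlety is that the spectral radius of $W$ being less than $1$ is not by itself the definition of numerical stability unless one controls the transient growth; the standard way around this is to invoke that a fixed matrix $W$ with $\rho(W)<1$ is power-bounded (indeed $\|W^n\|\to 0$), so the numerical solution stays bounded for any fixed step configuration, which is exactly the notion of stability used here (and in \cite{Art-Alejandro,LMM-Seib-Ros}). I would state this explicitly: since $W$ does not depend on $n$, $\rho(W)<1$ implies $\sup_n\|W^n\|<\infty$, so $\|Y_{n+s-1}\|$ remains bounded, i.e.\ the method is stable. One should also note that the equivalence $\det(W-\xi I)=0 \Leftrightarrow \det\big(\xi^m(\tfrac1h\rho(\xi)I-\sigma^*(\xi)(-A))-\sigma(\xi)B\big)=0$ carries an extra factor $\xi^{(s-1)d}$ or similar coming from the block-companion structure, so the case $\xi=0$ must be excluded separately — but $0$ lies inside the unit disk anyway, so it causes no problem, and for $\xi\neq 0$ one divides through and recognizes the determinant as $\det T(\xi)$ up to the nonzero scalar $\xi^{\,\ast}$ and the sign conventions relating $\rho,\sigma,\sigma^*$ in $T(x)$ versus the characteristic polynomial.

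The main obstacle, such as it is, is purely bookkeeping: matching the polynomial $T(x)=x^m\big(\tfrac1h\rho(x)I-\sigma(x)(-A)\big)-\sigma^*(x)B$ against the determinant condition for $W-\xi I$ exactly, keeping track of which of $\sigma$ and $\sigma^*$ multiplies $(-A)$ versus $B$ (the excerpt's intermediate display writes $\sigma^*(\xi)(-A)$ and $\sigma(\xi)B$, while $T$ is written with $\sigma(x)(-A)$ and $\sigma^*(x)B$, so a consistent convention must be fixed), and verifying that the eigenvector structure $V=(\xi^{s+m-1}v,\dots,\xi v,v)^T$ forces the reduced equation $T(\xi)v=0$. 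None of this is deep; it is the routine reduction of a block-companion eigenvalue problem to its defining matrix polynomial. So the proof is essentially: (i) recall the eigenvector/characteristic-equation reduction; (ii) take the contrapositive — an eigenvalue $\xi$ of $W$ with $|\xi|\ge1$ would make $T(\xi)$ singular; (iii) conclude $\rho(W)<1$; (iv) invoke power-boundedness of a fixed matrix with spectral radius below $1$ to get stability.
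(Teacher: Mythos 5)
Your argument is correct and follows exactly the route the paper relies on: the paper does not prove this proposition itself (it defers to the cited references), but the derivation immediately preceding it establishes precisely the reduction you use, namely that any eigenvalue $\xi$ of $W$ admits an eigenvector of the form $V=(\xi^{s+m-1}v,\dots,\xi v,v)^{T}$ with $T(\xi)v=0$, so the contrapositive gives $\rho(W)<1$ and power-boundedness of the fixed iteration matrix yields stability. You are also right that the two displays in the paper swap $\sigma$ and $\sigma^{*}$ between the $(-A)$ and $B$ terms; the consistent convention is the one in the definition of $T(x)$, with $\sigma$ attached to the implicit term $-A$ and $\sigma^{*}$ to the delayed term $B$.
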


However, the matrix $T(x)$ relates the time coefficients $\alpha_{j}, \beta_{j}, \beta^{*}_{j}$ with the matrices $- h A, h B$. To simplify the problem, we will try to reduce it to study stability conditions in scalar equations.

Let $p\in\R$. Multiplying by the matrix $A^{p-1}$, which is positive definite since $A$ is positive, on the left side of $T(\xi)v=0$, we arrive at the equality:
$$\frac{1}{h}\xi^{m}\rho(\xi)A^{p-1}v+\xi^{m}\sigma(\xi) A^{p}v-\sigma^{*}(\xi)B A^{p-1}v=0.$$

Multiplying now by $v^{*}$ on the right, and recalling that $\langle v_{1}, v_{2}\rangle=v_{1}^{*} v_{2}$ for all $v_{1},v_{2}\in\C^{d}$ and defining in this case
$$z=-h\cdot\frac{\langle v, A^{p}v\rangle}{\langle v, A^{p-1}v\rangle}, \quad \quad \mu=\frac{\langle v, A^{p-1}Bv\rangle}{\langle v, A^{p}v\rangle},$$ we obtain the equation \begin{equation}\label{IMEX-LMM eq autonomous character} \zeta^{m}(\rho(\zeta)-z\sigma(\zeta))+z\mu\sigma^{*}(\zeta)=0,
\end{equation}
for these new values of $z \in F( A h)$ and $\mu \in F_p$.

\vspace{0.1cm}
With all that has been stated so far, we can deduce the stability of a linear system, which is summarized as follows:
\begin{thm}\label{Th: stability of the autonomous system and h}
    Let us consider the system (\ref{eq:1:EDR_EstabilidadMétodo}), and let $\lambda_{d}$ be the largest eigenvalue of the matrix $A$. Suppose there exists $p\in\R$ such that $F_{p}\subseteq D(0,r)$, where $0<r\leq 1$, then the IMEX BDF2 and IMEX BDF3 methods applied to the linear system (\ref{eq:1:EDR_EstabilidadMétodo}) are stable for $h\leq h^{*}$ with $h^{*}=   \frac{  | \chi(r ) | }{\lambda_{d}}$ for IMEX BDF 2 and  $h^{*}=   \frac{  | \tilde{ \chi}(r) | }{\lambda_{d}}$ for IMEX BDF 3.
\end{thm}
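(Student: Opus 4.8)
The plan is to combine the characterization of scalar stability (Lemma~\ref{Lemma-Koto} together with Corollary~\ref{Prop D(0,sigma)cD_z}) with the field-of-values reduction just established. First I would fix the step size $h\leq h^{*}$ and let $\xi$ be any eigenvalue of the matrix $W$ with $|\xi|\geq 1$; the goal is to derive a contradiction, so that by the Proposition preceding the statement the method is stable. From the discussion above, such a $\xi$ yields a nonzero $v\in\C^{d}$ with $T(\xi)v=0$, and after multiplying by $A^{p-1}$ on the left and $v^{*}$ on the right we obtain equation~\eqref{IMEX-LMM eq autonomous character} for the scalars $z=-h\,\langle v,A^{p}v\rangle/\langle v,A^{p-1}v\rangle$ and $\mu=\langle v,A^{p-1}Bv\rangle/\langle v,A^{p}v\rangle$. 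Here $z$ is real: since $A$ is Hermitian positive definite, both $\langle v,A^{p}v\rangle$ and $\langle v,A^{p-1}v\rangle$ are positive reals, so $z<0$. Moreover $-z/h = \langle v,A^{p}v\rangle/\langle v,A^{p-1}v\rangle$ is a Rayleigh-type quotient bounded above by $\lambda_{d}$ (diagonalize $A$, or note $A^{p}\preceq\lambda_{d}A^{p-1}$), hence $0<-z\leq h\lambda_{d}\leq h^{*}\lambda_{d}$.

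Next I would observe that $\mu\in F_{p}$: indeed, normalizing $w=v/\sqrt{\langle v,A^{p}v\rangle}$ gives $\langle w,A^{p}w\rangle=1$ and $\mu=\langle w,A^{p-1}Bw\rangle$, which is precisely an element of $F_{p}(A,B)$ as defined in~\eqref{FOVexpr}. By hypothesis $F_{p}\subseteq D(0,r)$, so $|\mu|\leq r$. Now I invoke the scalar theory. Equation~\eqref{IMEX-LMM eq autonomous character} is exactly the characteristic equation~\eqref{IMEX-LMM eq caract lineal} (in the rescaled variable, with $P(\mu,z,\zeta)=0$), and the fact that $\xi$ is a root with $|\xi|\geq 1$ says precisely that $(z,\mu)$ is \emph{not} in the stability set $D_{z}$ for the corresponding $m$. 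So it suffices to show that, under $0<-z\leq h^{*}\lambda_{d}$ and $|\mu|\leq r$, we do have $(z,\mu)\in D_{z}$ for every $m\geq 0$ — i.e. every root satisfies $|\zeta|<1$ — which is the desired contradiction.

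To prove that last inclusion I would argue as follows. Since $[-\infty,0)\subseteq S_{A}$ for implicit BDF2 and BDF3, Lemma~\ref{Lemma-Koto}(a)$\Rightarrow$(b) shows it is enough to check $|\mu|<\sigma_{z}$; and by Corollary~\ref{Prop D(0,sigma)cD_z}, $D(0,\sigma_{z})\subseteq D_{z}$, so in fact $|\mu|<\sigma_{z}=\psi(z)$ (resp. $\tilde\psi(z)$) suffices. Now use monotonicity: by Theorem~\ref{Non-decreasing psi} (resp. Theorem~\ref{Non-decreasing minimum fun}), $\psi$ is non-decreasing on $[-\infty,0)$, so for $z\in[-h^{*}\lambda_{d},0)$ we have $\psi(z)\geq\psi(-h^{*}\lambda_{d})$. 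By the very definition of $\chi$ as the (appropriately branched) inverse of $\psi$ in~\eqref{chi-IMEX-BDF2}, the choice $h^{*}=|\chi(r)|/\lambda_{d}$ gives $-h^{*}\lambda_{d}=-|\chi(r)|=\chi(r)$ and hence $\psi(-h^{*}\lambda_{d})=\psi(\chi(r))=r$. Therefore $\sigma_{z}=\psi(z)\geq r\geq|\mu|$. One must handle the borderline possibility $|\mu|=r=\sigma_{z}$ separately: there Lemma~\ref{Lemma-Koto} only gives (b)$\Rightarrow$(c), not stability; but this can only occur when $z=-h^{*}\lambda_{d}$ exactly and $\mu$ lies on $\Gamma_{z}$, in which case one either appeals to the convention that $h\leq h^{*}$ is interpreted with the boundary included by the same continuity/Cauchy-integral argument used in Proposition~\ref{Prop internal region D_{z}}, or simply notes the statement is about $h\leq h^{*}$ and the closed-disk inclusion $D(0,\sigma_{z})\subseteq D_{z}$ in the relevant regime. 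Either way, no root has modulus $\geq 1$, contradicting the existence of $\xi$; hence $W$ has spectral radius $<1$ and the method is stable.

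\textbf{Main obstacle.} The delicate point is the bookkeeping at the boundary $r=1$ and, more importantly, making sure the inverse $\chi$ is applied on the correct branch: $\psi$ is piecewise with a flat piece $\psi_{1}\equiv 1$ on $[-1/\sqrt2,0)$, so $\psi(\chi(r))=r$ only holds cleanly for $r<1$ (for $r=1$ one takes the largest such $z$, namely $-1/\sqrt2$, which is exactly how $\chi(1)$ is defined). I would therefore treat $r<1$ and $r=1$ as two cases, and in the $r<1$ case emphasize that $\chi(r)=\psi^{-1}(r)$ picks the \emph{minimal} preimage $z_{m}$, so that monotonicity of $\psi$ upgrades $z\geq z_{m}$ into $\psi(z)\geq\psi(z_{m})=r$. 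Everything else — the Hermitian Rayleigh bound $z\in[-h\lambda_{d},0)$, the identification $\mu\in F_{p}$, and the chain $|\mu|\leq r\leq\psi(z)=\sigma_{z}\Rightarrow(z,\mu)\in D_{z}$ — is routine given the earlier results.
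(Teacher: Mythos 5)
Your proposal is correct and follows essentially the same route as the paper: reduce via the field of values to the scalar characteristic equation, note that $z$ lies in $[-h\lambda_{d},0)$ by the Rayleigh-quotient/Hermitian argument and that $\mu\in F_{p}\subseteq D(0,r)$, then combine Corollary~\ref{Prop D(0,sigma)cD_z} with the monotonicity of $\sigma_{z}$ (Theorems~\ref{Non-decreasing psi} and~\ref{Non-decreasing minimum fun}) to reduce everything to $F_{p}\subseteq D(0,\sigma_{z_{d}})$ with $z_{d}=-h\lambda_{d}$, which the definition of $\chi$ (resp.\ $\tilde\chi$) as the minimal-preimage inverse of $\psi$ (resp.\ $\tilde\psi$) translates into the stated bound on $h$. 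Your contradiction framing and your explicit handling of the branch/boundary issues for $\chi$ are only presentational differences — in fact you spell out the final step $\psi(-h^{*}\lambda_{d})=r$ and the borderline case $|\mu|=\sigma_{z}$ more carefully than the paper does.
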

\begin{dem}  Let the equation be (\ref{IMEX-LMM eq autonomous character}) where we remember that the parameters $z$ and $\mu$ follow the formulas:
\begin{equation}\label{condition-stability}
    z=-h\cdot\frac{\langle v, A^{p}v\rangle}{\langle v, A^{p-1}v\rangle}, \quad \quad \mu=\frac{\langle v, Bv\rangle}{\langle v, A^{p}v\rangle}, \quad \text{with} \ v\in\C^{d}.
\end{equation}
From this, and using certain properties of the matrix $A$, we were able to previously prove that $\mu\in F_{p}=F(A^{\frac{p}{2}-1}BA^{-\frac{p}{2}})$. Let us also consider Proposition \ref{Prop D(0,sigma)cD_z}, which stated that $D(0,\sigma_{z})\subseteq D_{z}$ for $z\in \R_{<0}\cup\{-\infty\}$. Thus, taking into account both implications, we can declare that whenever $F_{p}\subseteq D(0,\sigma_{z})$, then $\mu\in D_{z}$.

However, this must be true for any value $v\in \C^{d}$. Therefore, it is sufficient that
$$F_{p} \subseteq \underset{z\in F(hA)}{\bigcap} D_{z} $$
is satisfied to obtain stability.

By the property (f) of Remark \ref{remFOV} we have that the numerical range of a Hermitian matrix is a segment of the real line. Specifically $F(hA)=[-\lambda_{d}h, -\lambda_{1}h]$, where $\lambda_{1}<...<\lambda_{d}$ are the eigenvalues ordered in increasing order of the matrix $A$. With this, $z_{d}=-\lambda _{d} h$ will be the minimum value among all $z$, and then
$$D(0,\sigma _{z_{d}})\underset{\underset{\text{Lem } \ref{Observation}}{\uparrow}}{\subseteq} D(0,\sigma _{z}) \underset{\underset{\text{Cor } \ref{Prop D(0,sigma)cD_z} }{\uparrow}}{\subseteq} D_{z} \quad \forall \ z\in F(hA) \ \Longrightarrow \ D(0,\sigma _{z_{d}})\subseteq \underset{z\in F(hA)}{\bigcap}D_{z} \quad \forall \ z\in F(hA).$$ In this sense, we can reduce the problem to the study when $F_{p}\subseteq D(0,\sigma_{z_{d}})$, where $z_{d}=-\lambda_{d}h$ with $\lambda_{d}=\rho(A)$, that is, the spectral radius of the matrix $A$.
$\hfill\square$
\end{dem}

\begin{coro}\label{Coro: reg est incondicional}
The unconditional stability regions of the IMEX BDF2 and IMEX BDF3 methods contain, respectively, the disks $D(0,\frac{1}{3})$ and $D(0,\frac{1}{7})$.
\end{coro}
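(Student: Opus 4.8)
The plan is to derive Corollary~\ref{Coro: reg est incondicional} as a limiting case of Theorem~\ref{Th: stability of the autonomous system and h}, by letting the step size $h$ grow without bound (equivalently $z_d \to -\infty$) and tracking what radius $r$ remains admissible. Recall from Definition~\ref{Def region est incond D} that unconditional stability requires \Eqref{IMEX-LMM eq autonomous character} to be stable for \emph{all} $z\in\R_{<0}\cup\{-\infty\}$, i.e.\ for all $h>0$ and all $m\geq 0$; by the reduction carried out in the proof of Theorem~\ref{Th: stability of the autonomous system and h}, it suffices that $F_p\subseteq D(0,\sigma_{z_d})$ for every $z_d<0$, and since $\sigma_{z_d}=\psi(z_d)$ (resp.\ $\tilde\psi(z_d)$) is non-decreasing in $z_d$ by Theorems~\ref{Non-decreasing psi} and~\ref{Non-decreasing minimum fun}, the worst case is the infimum over $z_d<0$, namely $\lim_{z\to-\infty}\psi(z)$.

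First I would compute these limiting values explicitly. From the piecewise formula \Eqref{Fun minimos IMEX BDF2}, the relevant branch as $z\to-\infty$ is $\psi_3(z)=\tfrac{-4+z}{3z}\to\tfrac13$; similarly, from \Eqref{Fun minimos IMEX BDF3}, $\tilde\psi_3(z)=\tfrac1{21}\bigl(3-\tfrac{20}{z}\bigr)\to\tfrac17$. These two limits are in fact already recorded in the discussion preceding \Eqref{chi-IMEX-BDF2} and \Eqref{chi-IMEX-BDF3}. Hence $\sigma_{z}>\tfrac13$ for every finite $z<0$ in the IMEX~BDF2 case, and $\sigma_z>\tfrac17$ in the IMEX~BDF3 case, with the disks $D(0,\tfrac13)$ and $D(0,\tfrac17)$ being exactly $\bigcap_{z<0}D(0,\sigma_z)$.

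Next I would invoke Corollary~\ref{Prop D(0,sigma)cD_z}, which gives $D(0,\sigma_{z})\subseteq D_{z}$ for every $z\in\R_{<0}\cup\{-\infty\}$, so that $D(0,\tfrac13)\subseteq\bigcap_{z<0}D(0,\sigma_z)\subseteq\bigcap_{z<0}D_z$, and the latter intersection is precisely the unconditional stability region $D$ from Definition~\ref{Def region est incond D} once one also checks the $m$-uniformity; but that is automatic since, as observed in \S\ref{Section:sigmaIMEXBDF}, $|\mu_{m,z,\theta}|=|\mu_{0,z,\theta}|$, so the curves $\Gamma_z$ (and hence the regions $D_z$) do not depend on $m$ at all. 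The argument for IMEX~BDF3 is identical with $\tfrac13$ replaced by $\tfrac17$ and $\psi$ by $\tilde\psi$.

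The only mildly delicate point — and the one I would be most careful about — is the boundary: Corollary~\ref{Prop D(0,sigma)cD_z} and Lemma~\ref{Lemma-Koto} only guarantee $D(0,\sigma_z)\subseteq D_z$ as an \emph{open} disk, and the limiting radii $\tfrac13,\tfrac17$ are attained only as $z\to-\infty$. So strictly speaking one should either phrase the corollary with open disks, or note that for any $\mu$ with $|\mu|\le\tfrac13$ and any \emph{finite} $z<0$ one has $|\mu|\le\tfrac13<\sigma_z$, hence $\mu\in D(0,\sigma_z)\subseteq D_z$; the case $z=-\infty$ is handled by the same monotonicity since $\sigma_{-\infty}=\tfrac13$ and membership in $D_{-\infty}$ is the stability of $\rho(\zeta)-z\sigma(\zeta)$'s formal limit, which is the implicit BDF recurrence known to be $A$-stable on $(-\infty,0)$. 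I would close by stating that this establishes $D(0,\tfrac13)\subseteq D$ for IMEX~BDF2 and $D(0,\tfrac17)\subseteq D$ for IMEX~BDF3, which is the assertion of the corollary. $\hfill\square$
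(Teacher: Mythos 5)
Your proposal is correct and follows essentially the same route as the paper's own proof: the inclusion $D(0,\sigma_z)\subseteq D_z$ from Corollary~\ref{Prop D(0,sigma)cD_z}, the monotonicity of $\psi$ and $\tilde\psi$ to identify $z\to-\infty$ as the worst case, and the limits $\lim_{z\to-\infty}\psi(z)=\tfrac13$, $\lim_{z\to-\infty}\tilde\psi(z)=\tfrac17$. Your added remarks on the $m$-independence of $|\mu_{m,z,\theta}|$ and on the open/closed boundary of the disks are careful refinements of details the paper leaves implicit, not a different argument.
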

\begin{dem}

Taking into account the Corollary \ref{Prop D(0,sigma)cD_z} and Lemma \ref{Prop D(0,sigma)cD_z}, we quickly arrive at $D(0,\sigma_{-\infty})\subseteq D_{-\infty}=D$. Starting from the minimum functions of both methods, considering specifically $\psi(z)$ and $\tilde{\psi}(z)$, and calculating $\lim_{z\rightarrow -\infty} \psi (z)$, $\lim_{z\rightarrow -\infty} \tilde{\psi} (z)$, we obtain that the minimum values  $\sigma_{-\infty}$ are $\frac{1}{3}$ for the IMEX BDF2 and $\frac{1}{7}$ for the IMEX BDF3, which concludes. $\hfill\square$
\end{dem}

\vspace{\baselineskip}
Until now, the only conditions imposed on the matrices $-A$ and $B$ were that the first one was hermitian and negative definite. Let us now suppose that additionally the matrix $A^{-1}B$ is normal, which implied fulfilling $(A^{-1}B)^{*}A^{-1}B=A^{-1}B(A^{-1}B)^{*}$,
where $(A^{-1}B)^{*}$ is the conjugate transpose of $A^{-1}B$. Hence we have in \cite[p.45]{Horn-fieldofvalues} that the numerical radius (Definition \ref{Def numerical radius}) and the spectral radius (Definition \ref{Def spectral radius}) of the mentioned matrix coincide: \quad $r(A^{-1}B)=\rho(A^{-1}B).$

Furthermore
$$\rho(A^{-1}B)\leq \underset{\Re(\xi)=0}{\sup}\rho((\xi I-A)^{-1}B).$$

Then, if we impose the stability condition
\begin{equation}\label{Condición estabilidad asintótica S.A.L}
    \left\{
     \begin{aligned}
         & \lambda\in \sigma[L] \ \Rightarrow \ \Re(\lambda)<0\\
         & \rho[(\xi I-L)^{-1}G]<1 \ \text{whenever} \ \Re(\xi)=0,\ \xi\neq 0\\
         & -1 \notin \sigma[L^{-1}G]
     \end{aligned}
     \right.
\end{equation}
derived in \cite[Theorem 3.1]{Stability-theta-methods-DDE} and relative to the stability of the solutions in linear systems, it leads us to $\ F(A^{-1}B)\subseteq D(0,1) \ $. This is because, by definition, $r(A^{-1}B)$ is, in absolute value, the largest of all the values of $F(A^{-1}B)$, and therefore, $$F(A^{-1}B)\ \subseteq \ D(0,r(A^{-1}B))=D(0,\rho(A^{-1}B)) \ \subseteq \ D(0,1),$$

With all this, whenever problem \Eqref{eq:1:EDR_EstabilidadMétodo} is stable, taking into account Theorem \ref{Th: stability of the autonomous system and h}, we affirm that the methods will be stable if $A^{-1}B$ is normal, and $h\leq h^{*}$, where $h^{*}=\frac{1}{\lambda_{d}\sqrt{2}}$ in the IMEX BDF2, and $h^{*}=\frac{0.722965}{\lambda _{d}}$ in the IMEX BDF3, with $\lambda _{d}=\rho(A)$.  The reason is $|  \chi(1) | = \frac{1}{ \sqrt{2}}$ and $| \tilde{ \chi}(1) | = 0.722965$.

\subsection{ Example: linear system where matrices do not simultaneously diagonalize }

As a practical example of how these theoretical concepts can be employed, let us consider the following test example:
\begin{ej}
\textit{Stability restrictions of  IMEX BDF2 and IMEX BDF3 methods applied to the linear system:}
\begin{equation*}
\left\{
\begin{aligned}
& y'(t)=-A\ y(t)+B\ y(t-1)+\tilde{f}(t) & t\geq 0\\
& y(t) = \begin{pmatrix}
\cos(t) \\
e^{-0.1t} \\
1+t
\end{pmatrix}
& t\leq 0
\end{aligned}
\right.
\end{equation*}
\textit{where the matrices are defined as}
$$
A=\begin{pmatrix}
20 & -4 & 0 \\
-4 & 20 & 0\\
0 & 0 & 10
\end{pmatrix},
\quad \quad
B=\begin{pmatrix}
-2 & 1 & 0 \\
-1 & -2 & 0\\
0 & 1 & 6
\end{pmatrix}.
$$
\textit{and $\Tilde{f}$ is such that $y(t) =(\cos(t),e^{-0.1t}, 1+t)^{T}$ is the solution of the differential equation with delay.}
\end{ej}
\begin{sol}

Reasoning in a similar way to the previous example, let us first calculate
$\Tilde{f}(t)$ starting from the fact that $\Tilde{f}(t)=y'(t)+Ay(t)-By(t-1)$ and knowing the real solution of the example. Thus, we obtain:
\begin{equation*}
\begin{aligned}
\Tilde{f}(t)=& ( -e^{-0.1 (t-1)}-4 e^{-0.1 t}-\sin (t)+2 \cos (1-t)+20 \cos (t), \\ & 2 e^{-0.1 (t-1)}+19.9 e^{-0.1 t}+\cos (1-t)-4 \cos (t), \\ & 1 -6 t-e^{-0.1 (t-1)}+10 (t+1) ).
\end{aligned}
\end{equation*}

Regarding the IMEX BDF2 and IMEX BDF3 methods, we will again use the formulas depicted on the previous example (\ref{IMEX BDF2 ejemplos})-(\ref{IMEX BDF3 ejemplos}), obtained by applying the general equations (\ref{IMEX-BDF2 general})-(\ref{IMEX-BDF3 general}) to this linear system.

\vspace{\baselineskip}
Now, let us verify if the system presented in the example satisfies the conditions required to apply our theorems. First, it is easy to confirm that the matrix $-A$ is hermitian and negative definite, using some simple calculations and the fact that the eigenvalues of $A$ are $(24,16,10)$. On the other hand, we quickly observe that $-A$ and $B$ do not diagonalize simultaneously because the matrices do not commute. Consequently we can see that the analysis of this equation can be reasoned using the theoretical concepts developed in the previous chapter. In particular, we will use Theorem \ref{Th: stability of the autonomous system and h}, which shows the procedure to follow in order to find the step size restrictions $h^{*}$ for both methods in the case in which we find ourselves ($A$ and $B$ do not diagonalize simultaneously).

Let us consider for simplicity that $p=0$. Applying the mentioned theorem will consist of finding the value $r$ such that $F((-A)^{-1}B)\subseteq D(0,r)$. In Fig. \ref{fig:rangonum} a graphical representation of $F((-A)^{-1}B)$ is shown.

\begin{figure}[h!]
    \centering
    \begin{subfigure}[b]{0.34\textwidth}
    \centering
    \includegraphics[width=\textwidth]{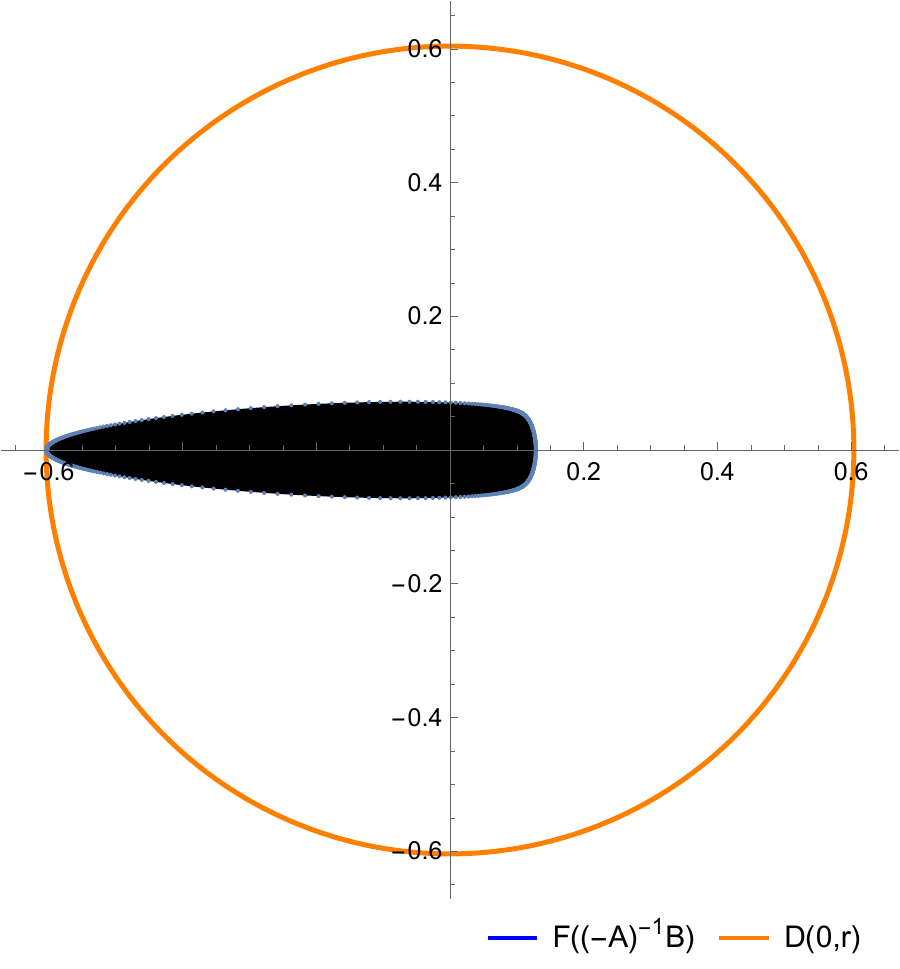}
    \label{fig:figura15}
    \end{subfigure}
    \caption{Location of the numerical range $F((-A)^{-1}B)$.}
    \label{fig:rangonum}
\end{figure}

\vspace{0.15cm}
From this we affirm that the radius $r$ of the disk centered on zero that contains the numerical range, corresponds to the largest vector, in absolute value, of $F((-A)^{-1}B)$. This parameter for this particular problem is $r=0.604$, and reasoning from it and following the mentioned theorem, we arrive at the step size restrictions for this linear differential equation with delay: $h\leq h^{*}=0.164762$ for the IMEX BDF2 and $h\leq h^{*}=0.059618$ for the IMEX BDF3.

\vspace{0.15cm}
In Table \ref{tab:Autonomous error table 2},  the errors committed in each method, at time $t_{e}=500$ for different step sizes, are provided, as well as their convergence rates, which are calculated again using (\ref{Conv. rate}) with $h_{1}=0.05$ and $h_{2}=0.005$.\\
We can observe numerically (see Table \ref{tab:Autonomous error table 2}), that IMEX BDF2 is stable for $h=0.25 >h^*$, but unstabilities appear in the solution with $h=0.5$.  Additionally, IMEX BDF3 is stable for $h=0.1 >h^*$, but it is unstable with $h=0.25$.    The explanation is similar to the one given in Remark  \ref{Remark-cond-stab}:   Theorem  \ref{Th: stability of the autonomous system and h} only provides sufficient conditions to obtain stability, because it is sufficient that $\mu\in D_{z}$ for any $\mu$ and $z$ given by equation \Eqref{condition-stability}, but since all these calculations would be very expensive, we replace this condition by imposing
$$F_{p} \subseteq \underset{z\in F(hA)}{\bigcap} D_{z}. $$
\begin{table}[H]
\centering
\resizebox{\textwidth}{!}{%
\begin{tabular}{| c | c | c |} \hline & IMEX BDF2 & IMEX BDF3\\ \hline
$ h=0.5 $ & $ (1.4315\cdot 10^{-2}, 9.9801\cdot 10^{-3}, 7.7264\cdot 10^{2}) $ & $(1.0906\cdot 10^{-2}, 6.8539\cdot 10^{-3}, 6.9434\cdot 10^{80}  )$
\\ $h=0.25$ & $(4.6735\cdot 10^{-3}, 3.1262\cdot 10^{-3}, 3.3401\cdot 10^{-4})$ & $(1.1014\cdot 10^{-3}, 6.8547\cdot 10^{-4}, 8.8856 \cdot 10^{38} )$
 \\ $h=0.1$ & $(8.3395\cdot 10^{-4}, 5.5237\cdot 10^{-4}, 4.2632\cdot 10^{-5})$ & $(5.3865\cdot 10^{-5}, 3.2963\cdot 10^{-5}, 5.3185 \cdot 10^{-6})$
 \\ $h=0.05$ & $ ( 2.1457\cdot 10^{-4},1.4179\cdot 10^{-4}, 9.8392\cdot 10^{-6})$ & $ (5.9368 \cdot 10^{-6}, 3.6030\cdot 10^{-6}, 7.0573 \cdot 10^{-7} )$
 \\ $h=0.025$ & $ (5.4342\cdot 10^{-5}, 3.5873\cdot 10^{-5}, 2.3623\cdot 10^{-6} )$ & $ (6.9036 \cdot 10^{-7}, 4.1690\cdot 10^{-7}, 9.0602 \cdot 10^{-8})$
 \\ $h=0.01$ & $(8.7586 \cdot 10^{-6}, 5.7784 \cdot 10^{-6}, 3.6884 \cdot 10^{-7} )$ & $ (4.2159 \cdot 10^{-8}, 2.5375 \cdot 10^{-8}, 5.8877\cdot 10^{-9} )$
 \\ $h=0.005$ & $ (2.1947 \cdot 10^{-6}, 1.4477 \cdot 10^{-6}, 9.1462 \cdot 10^{-8} )$ & $ ( 5.1848 \cdot 10^{-9}, 3.1170 \cdot 10^{-9}, 7.4311 \cdot 10^{-10}  )$
\\ \hline
Conv. Rate &  1.99045  & 3.0589
\\ \hline
\end{tabular}
}
\caption{Errors committed at time $t_{e}=500$ for $y(t)=(\cos(t),e^{-0.1t}, 1+t)^{T}$.}
\label{tab:Autonomous error table 2}
\end{table}

\end{sol}

\section{Numerical Simulations}\label{Numer-simulations}

\subsection{ Linear system of PDDEs }

Let us develop the following numerical simulation related to a partial differential equation with delay, following, to a certain extent, the methodology used in the previous sections for systems of differential equations.

Let us consider the following system of PDDEs:
 \begin{equation}\label{PDDE u1u2}
        \left\{
            \begin{aligned}
              & \frac{\partial u_{1}}{\partial t}=a_{1}\frac{\partial^{2}u_{1}}{\partial x^{2}}-e^{l\frac{\pi}{2}}u_{1}(t-\tau,x)+\frac{1}{4}e^{l\frac{\pi}{2}}(4l+\pi^{2})u_{2}(t-\tau,x) & t\geq 0, & \quad -1\leq x\leq 1\\
              & \frac{\partial u_{2}}{\partial t}=a_{2}\frac{\partial^{2}u_{2}}{\partial x^{2}}-e^{l\frac{\pi}{2}}u_{2}(t-\tau,x)-\frac{1}{4}e^{l\frac{\pi}{4}}(4l+\pi^{2})u_{1}(t-\tau,x)  & t\geq 0, & \quad -1\leq x\leq 1
            \end{aligned}
        \right.
    \end{equation}
    with the initial and boundary conditions
     \begin{equation*}
        \left\{
            \begin{aligned}
              & u_{1}(t,-1)=u_{1}(t,1)=0\\
              & u_{2}(t,-1)=u_{2}(t,1)=0\\
              & u_{1}(t,x)=\phi_{1}(t,x) \quad t\leq 0\\
              & u_{2}(t,x)=\phi_{2}(t,x) \quad t\leq 0,
            \end{aligned}
        \right.
    \end{equation*}
and being $a_{1}, a_{2}$ positive numbers, $l$ a real number, and $\phi_{1}(t), \phi_{2}(t)$ two given functions.

\begin{sol}

Since our theory is based on differential equations that depend only on the parameter $t$ and, in this case, the partial derivatives are also calculated with respect to the variable $x$, let us apply the method of lines (MOL) to the PDDEs of the statement. This procedure consists of replacing the derivatives of the spatial variables with numerical approximations of these at certain nodes, so that only the derivatives with respect to $t$ remain, and thus a system of DDEs is obtained.

Let $n\in\N$ and $x_{0}=-1$ and $x_{n}=1$. From this we can define $\Delta x=\frac{x_{n}-x_{0}}{n}=\frac{2}{n}$ such that we have the mesh points $x_{0}=-1<\cdots<x_{j}=x_{0}+j\cdot \Delta x <\cdots<x_{n}=1$. In this case we want to approximate the second order derivative so we consider, for example, the second order centered finite difference formula:
\begin{equation}\label{second_order_finite_diference_formula}
    f''(x_{s})=\frac{f(x_{s}-\Delta x)-2f(x_{s})+f(x_{s}+\Delta x)}{(\Delta x)^{2}}-\frac{(\Delta x)^{2}}{12}f^{(4)}(\xi), \quad \xi\in(x_{s}-\Delta x,x_{s}+\Delta x).
\end{equation}
In this way, for every point of the mesh $x_{j}\in\{x_{j}\}$ the following holds:
\begin{equation}\label{second_order_finite_diference}
    \left.\frac{\partial^{2}u(t,x)}{\partial x^{2}}\right|_{x=x_{j}}=\frac{u_{j-1}(t)-2u_{j}(t)+u_{j+1}(t)}{(\Delta x)^{2}}+O((\Delta x)^{2}),
\end{equation}
and therefore, we can approximate the previous system of PDDEs by the system of DDEs
   \begin{equation*}
        \left\{
            \begin{aligned}
              & \frac{\partial u_{1,j}(t)}{\partial t}=a_{1}\frac{u_{1,j-1}(t)-2u_{1,j}+u_{1,j+1}(t)}{(\Delta x)^{2}}-e^{l\frac{\pi}{2}}u_{1,j}(t-\tau,x)+\frac{1}{4}e^{l\frac{\pi}{2}}(4l+\pi^{2})u_{2,j}(t-\tau,x) \\
              & \frac{\partial u_{2,j}(t)}{\partial t}=a_{2}\frac{u_{2,j-1}(t)-2u_{2,j}+u_{2,j+1}(t)}{(\Delta x)^{2}}-e^{l\frac{\pi}{2}}u_{2,j}(t-\tau,x)-\frac{1}{4}e^{l\frac{\pi}{2}}(4l+\pi^{2})u_{1,j}(t-\tau,x)
            \end{aligned}
        \right.
    \end{equation*}
    for  $t\geq 0$ and $-1\leq x\leq 1$, and with the initial and boundary conditions
      \begin{equation*}
        \left\{
            \begin{aligned}
              & u_{1,-1}(t)=u_{1,1}(t)=0,\\
              & u_{2,-1}(t)=u_{2,1}(t)=0,\\
              & u_{1,j}(t)=\phi_{1}(t,x_{j}) \quad t\leq 0,\\
              & u_{2,j}(t)=\phi_{2}(t,x_{j}) \quad t\leq 0.
            \end{aligned}
        \right.
    \end{equation*}

Taking $z_{k,j}^{i}$, for each $k=1,2$, as the approximation of $u_{k}(t_{i},x_{j})$ and $z_{k}(t)=(z_{k,1}(t),...,z_{k,n-1}(t))^{T}$, we have that the above system simplifies to the equation
\begin{equation}\label{DDEeq3}
    z'(t)=Az(t)+Bz(t-\tau)
\end{equation}
where we define $z(t)=(z_{1}(t),z_{2}(t))$ and the matrices
\begin{equation*}
A=\begin{pmatrix}
a_{1}T & 0 \\
0 & a_{2}T
\end{pmatrix},
\quad \quad
B=e^{l\frac{\pi}{2}}\begin{pmatrix} -I & (l+\frac{\pi^{2}}{4})I\\ -(l+\frac{\pi^{2}}{4})I & -I \end{pmatrix}, \quad \quad I=I_{n-1}, \end{equation*} \begin{equation*} T=\frac{1}{(\Delta x)^{2}} \begin{pmatrix} -2 & 1 & 0 & 0 & \cdots & 0 \\ 1 & -2 & 1 & 0 & \cdots & 0 \\ 0 & 1 & -2 & \ddots & & \vdots \\ \vdots & & \ddots & \ddots & \ddots & 0\\ \vdots & & & \ddots & -2 & 1\\
0 & \cdots & \cdots & 0 & 1 & -2
\end{pmatrix}.
\end{equation*}
$T$ is known as a Toeplitz Matrix, and is characterized by being a square matrix in which the elements of its diagonals are constant. Generally, the eigenvalues $\lambda_{r}$ of a Toeplitz matrix of the form we find in this case, that is
\begin{equation*}
    M=\begin{pmatrix}
        \alpha & \beta & 0 & 0 & \cdots & 0 \\
        \gamma & \alpha & \beta & 0 & \cdots & 0 \\
        0 & \gamma & \alpha & \ddots &  & \vdots \\
        \vdots &  & \ddots & \ddots & \ddots & 0\\
        \vdots &  &  & \ddots & \alpha & \beta\\
        0 & \cdots & \cdots & 0 & \gamma & \alpha
    \end{pmatrix}_{N\cdot N}
    ,
\end{equation*}
are
\begin{equation}\label{eigenvaluesT}
    \lambda_{r}=\alpha-2\sqrt{\beta \gamma}\cos{\left(\frac{r\pi}{N+1}\right)}, \quad r=1,...,N.
\end{equation}
Specifically for this matrix T, we have that $\alpha=-\frac{2}{(\Delta x)^{2}}$, $\beta=\gamma=\frac{1}{(\Delta x)^{2}}$ and $N=n-1$, so $\lambda_{r}< 0 \ \forall r$. In this sense, $A$ will be a definite negative hermitian matrix whenever $a_1, a_2 >0$.

\vspace{\baselineskip}
Thanks to all of this, it is possible to calculate a numerical solution of the PDDEs of the statement by applying different methods to the previous system (\ref{DDEeq3}). Let us use the IMEX BDF2 and IMEX BDF3 methods developed in more detail above, which will follow again the formulas (\ref{IMEX BDF2 ejemplos})-(\ref{IMEX BDF3 ejemplos}), but considering here $\Tilde{f}(t)=0$. Let, for example, $a_{1}=1$, $a_{2}=1$, $n=100$ and $\tau=1$. Initial and boundary conditions are chosen such that the solution of these PDDEs are known as:
\begin{equation*}
\left\{
\begin{aligned}
& u_{1}(t,x)=\phi_{1}(t,x)=e^{lt}\sin(t)\cos\left(\frac{\pi}{2}x\right),\\
& u_{2}(t,x)=\phi_{2}(t,x)=e^{lt}\cos(t)\cos\left(\frac{\pi}{2}x\right).
\end{aligned}
\right.
\end{equation*}
Therefore we can affirm that the sign of the variable $l$ determines the result of the solution, since it diverges for positive values of this parameter, and converges to zero for negative values.

\vspace{\baselineskip}
Since this is again a linear system, and $A$ and $B$ do not commute, that is, the matrices do not diagonalize simultaneously ($A$ has all its eigenvalues simple), using Theorem \ref{Th: stability of the autonomous system and h}, we can study the stability of IMEX BDF2 and IMEX BDF3 methods. That is, let us consider, for example, $p=0$ and let us see where $F(A^{-1}B)$ is contained for the matrices $A$ and $B$ considered above.  In Fig. \ref{FieldA1B} these regions are provided for $l=-0.75$ and $l=0.75$.

\begin{figure}[h!]
    \centering
    \begin{subfigure}[b]{0.34\textwidth}
             \centering
             \includegraphics[width=\textwidth]{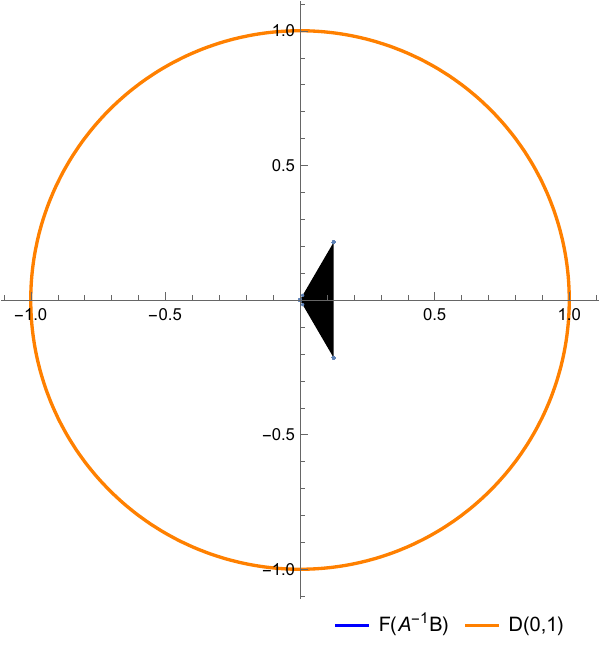}
             \caption{$l=-0.75$}
             \label{fig:figura16}
    \end{subfigure}
    \quad \quad
    \begin{subfigure}[b]{0.34\textwidth}
             \centering
             \includegraphics[width=\textwidth]{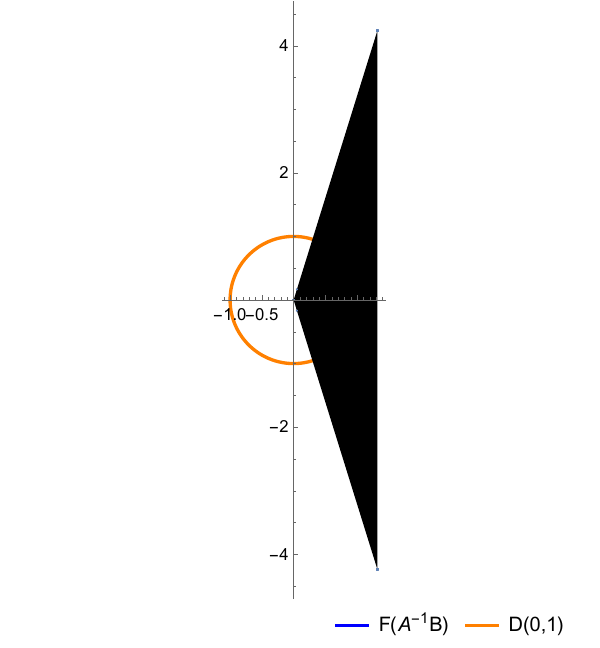}
             \caption{$l=0.75$}
             \label{fig:figura17}
    \end{subfigure}
    \caption{Location of the numerical range $F(A^{-1}B)$ for $n=100$ and different values of $l$.}\label{FieldA1B}
\end{figure}

From this, it is possible to obtain the value of the step size constraint $h^{*}$ for this problem. That is, taking into account Theorem \ref{Th: stability of the autonomous system and h} and the previous representation, we know that by imposing $l=-0.75$ and $n=100$, the numerical range $F(A^{-1}B)$ is contained in the disk $D(0,0.2142)$. For the IMEX BDF2 method we are in the unconditional stability region, so this procedure will be stable for any imposition on the step size $h$. As for IMEX BDF3, we are in the interval $\frac{1}{7}<r<0.218109$, so the step restriction will be given by the inequality $\quad h\leq \frac{20}{3\lambda_{d}(-1+7r)} \quad$
which leads to, for this method and the established parameters, $h^{*}=0.00133526$.

\vspace{0.1cm}
In Figures \ref{fig:solu1u2BDF2} and \ref{fig:solu1u2BDF3} the solutions obtained by each method for each variable at time $t_{e}=60$ are provided.
\begin{figure}[h!]
    \centering
    \begin{subfigure}[b]{0.32\textwidth}
             \centering
             \includegraphics[width=\textwidth]{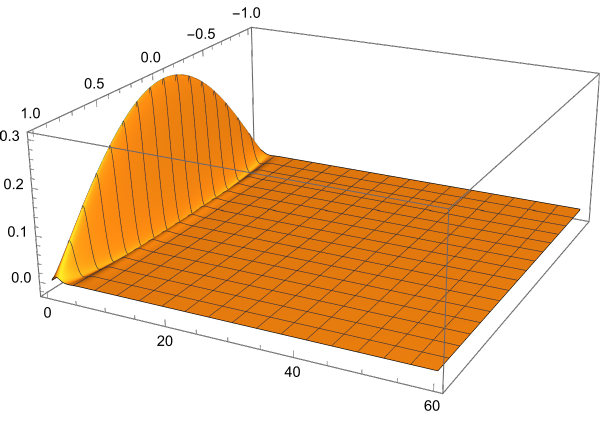}
             \caption{$u_{1}$}
             \label{fig:solu1BDF2}
    \end{subfigure}
    \begin{subfigure}[b]{0.32\textwidth}
             \centering
             \includegraphics[width=\textwidth]{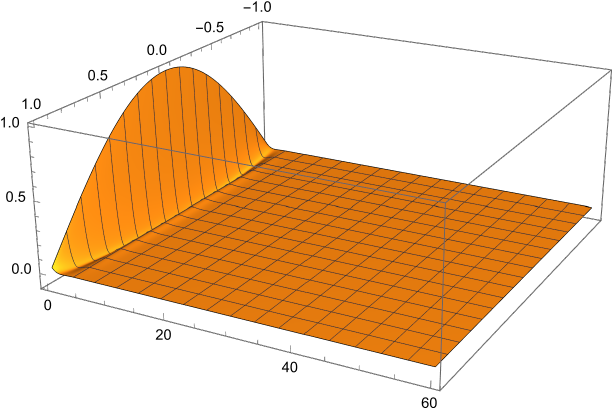}
             \caption{$u_{2}$}
            \label{fig:solu2BDF2}
    \end{subfigure}
    \begin{subfigure}[b]{0.33\textwidth}
             \centering
             \includegraphics[width=\textwidth]{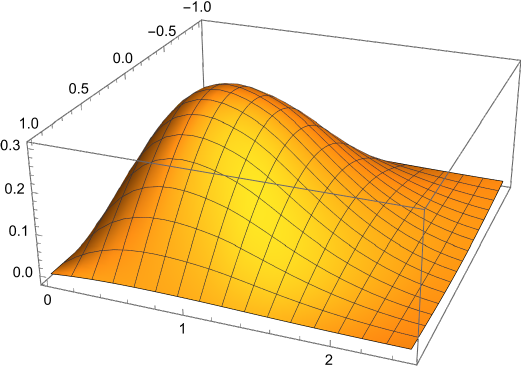}
             \caption{Detail of $u_{1}$, $t \in [0, 2.5]$.}
            \label{fig:solu1BDF2b}
    \end{subfigure}
    \caption{Numerical solution of (\ref{PDDE u1u2}) given by the IMEX BDF2 method with $h=0.1$ and $t_{e}=60$((a) and (b)), and detail of the numerical solution with $t \in [0, 2.5]$.}\label{fig:solu1u2BDF2}
\end{figure}
\begin{figure}[h!]
    \centering
    \begin{subfigure}[b]{0.32\textwidth}
             \centering
             \includegraphics[width=\textwidth]{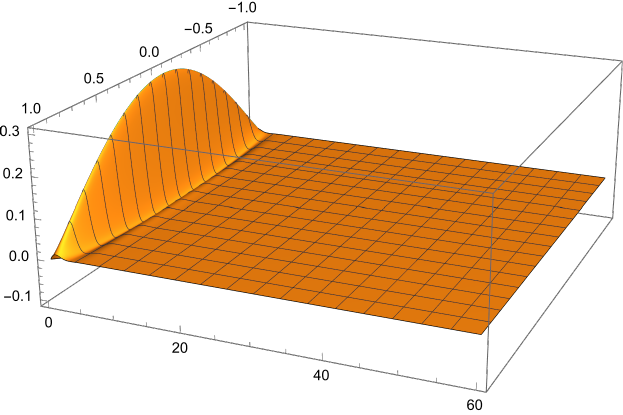}
             \caption{$u_{1}$}
             \label{fig:solu1BDF3}
    \end{subfigure}
    \begin{subfigure}[b]{0.32\textwidth}
             \centering
             \includegraphics[width=\textwidth]{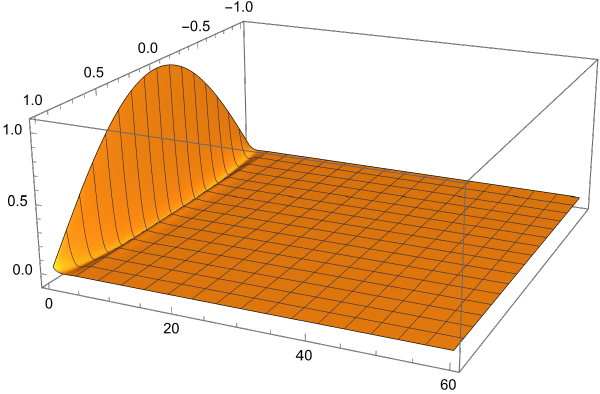}
             \caption{$u_{2}$ }
            \label{fig:solu2BDF3b}
    \end{subfigure}
    \begin{subfigure}[b]{0.33\textwidth}
             \centering
             \includegraphics[width=\textwidth]{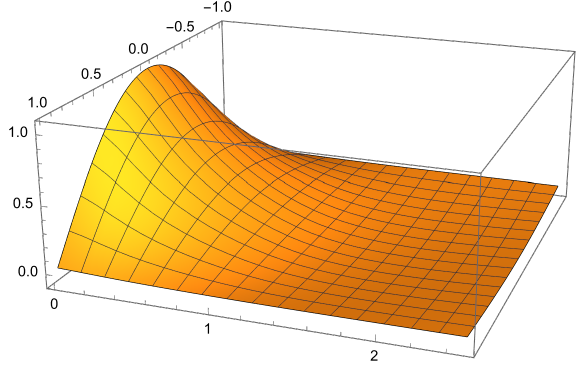}
             \caption{ Detail of $u_{2}$, $t \in [0, 2.5]$. }
            \label{fig:solu2BDF3}
    \end{subfigure}
    \caption{Numerical solution of (\ref{PDDE u1u2}) given by the IMEX BDF3 method with $h=0.1$ and $t_{e}=60$((a) and (b)), and detail of the numerical solution with $t \in [0, 2.5]$.}\label{fig:solu1u2BDF3}
\end{figure}

It is remarkable that, even if we impose step sizes larger than the existing restriction for $h$ in the IMEX BDF3,  we find very small errors, so that even for values larger than $h^{*}$ stability could be achieved. In this sense, the Theorem \ref{Th: stability of the autonomous system and h} guarantees us a sufficient, but not necessary, condition for stability.\\
In turn, it should be noted that, if we had established $l=0.75$, either of the two methods would return, for all $h$, approximations such that they tend to infinity, in accordance with the solution of the problem.

\vspace{0.2cm}
We have thus obtained an approximation of the solution of the problem, and specifically an analysis of the error committed when applying the IMEX BDF2 and IMEX BDF3 methods to a system of partial differential equations with delay.

\end{sol}

\subsection{ A non-linear PDDE: a delayed variant of the forced Burger's equations }

We consider now a variant of the Burgers' equation \cite[Subsec. 3.2]{Calvo2021} with delay in the convective term, plus a forcing term
	\begin{equation}\label{varBurg}
		\frac{\partial u}{\partial t}=\epsilon \frac{\partial^2 u}{\partial x^2}-\frac{1}{2} \left( u(t-\tau,x)^2  \right)_x +\tilde{f}(t,x)  , \quad \text{where } (t,x) \in [0,  t_e] \times [0,1],
	\end{equation}
    equipped with homogeneous Dirichlet conditions ($u(t,0) = u(t,1)=0$) and initial condition (IC)
    \begin{equation*}
               u(t,x) = \sin (\pi x), \quad t \leq 0.
    \end{equation*}
    We have chosen $\tilde{f}(t,x) =10 x (1-x) ( 1 + x \sin (t x))$ to study the stability of the problem  with a solution which does not decay and has some variability in a large interval.  Thus, we can choose $\epsilon =1$,  $n=100$, $\tau=1$ and $t_e=20$.

\begin{sol}

Let us proceed in the same way as in the previous example, but this time taking into account the forcing term of the differential equation.

Let $n\in\N$, $x_{0}=0$ and $x_{n}=1$, from which we can define $\Delta x=\frac{x_{n}-x_{0}}{n}$ and the mesh points $x_{0}=0<\cdots<x_{j}=j\cdot \Delta x<\cdots<x_{n}=1$. In order to approximate the different derivatives let us again consider for the second order derivative the second order centered finite difference formula (\ref{second_order_finite_diference_formula}), thanks to which, for every point of the mesh, (\ref{second_order_finite_diference}) holds. Knowing also that
$$\frac{\partial u(t-\tau,x)^{2}}{\partial x}=2\cdot u(t-\tau,x)\cdot u(t-\tau,x)_{x},$$
where we approximate the last factor with the first order central difference formula
$$f'(x_{s})\approx\frac{f(x_{s}+\Delta x)-f(x_{s}-\Delta x)}{2\Delta x}-\frac{(\Delta x)^2}{6}f'''(\xi)\quad \xi\in(x_{s}-\Delta x,x_{s}+\Delta x),$$
we have, for every $x_{j}\in\{x_{j}\}$, that
$$\left.\frac{\partial u(t-\tau,x)^{2}}{\partial x}\right|_{x=x_{j}}=  u(t-\tau,x_{j})\cdot \frac{u(t-\tau,x_{j+1})-u(t-\tau,x_{j-1})}{  \Delta x}+O((\Delta x)^2).$$

\vspace{\baselineskip}
Thanks to all of this, the PDDE presented on the statement above can be approximated by the DDE
\begin{equation} \label{systemPr4}
    \frac{\partial u_{j}(t)}{\partial t}=\epsilon\frac{u_{j-1}(t)-2 u_{j}(t)+ u_{j+1}(t)}{(\Delta x)^{2}}-\frac{1}{2}\left(u(t-\tau,x_{j})\cdot\frac{u(t-\tau,x_{j+1})-u(t-\tau,x_{j-1})}{\Delta x}\right)
\end{equation}
\[  +10x_{j}(1-x_{j})(1+x_{j}\sin(t x_{j})) \]
for $t\geq 0$ and $0\leq x\leq 1$, with the initial and boundary conditions
\[
\begin{cases}
    u_{0}(t)=u_{n}(t)=0, \\
    u_{j}(t)=\sin(\pi x_{j}) \quad t\leq 0.
\end{cases}
\]

 Denoting by $z_{j}^{i}$ the approximation of $u(t_{i},x_{j})$ and the vector $z(t)=(z_{1}(t),\cdots z_{n-1}(t))^{T}$, we get a new simplified  approximation of the form:
\begin{equation}\label{varBurgDDE}
    z'(t)=Az(t)+B(z(t-\tau))\cdot z(t-\tau)+\Tilde{f}(t,x),
\end{equation}
where the matrices used are defined as
\begin{equation*}
        A=\frac{\epsilon}{(\Delta x)^{2}}\begin{pmatrix}
            -2 & 1 & 0 & 0 & \cdots & 0 \\
            1 & -2 & 1 & 0 & \cdots & 0 \\
            0 & 1 & -2 & \ddots &  & \vdots \\
            \vdots &  & \ddots & \ddots & \ddots & 0\\
            \vdots &  &  & \ddots & -2 & 1\\
            0 & \cdots & \cdots & 0 & 1 & -2
        \end{pmatrix},
        \quad \quad
        B=\frac{1}{2 \Delta x}\begin{pmatrix}
            0 & -z^{i-m}_{1} & 0 & 0 & \cdots & 0 \\
            z^{i-m}_{2} & 0 & -z^{i-m}_{2} & 0 & \cdots & 0 \\
            0 & z^{i-m}_{3} & 0 & \ddots & & \vdots \\
            \vdots &  & \ddots & \ddots & \ddots & 0\\
            \vdots &  &  & \ddots & 0 & -z^{i-m}_{n-2}\\
            0 & \cdots & \cdots & 0 & z^{i-m}_{n-1} & 0
        \end{pmatrix},
\end{equation*}
and the \textit{j}th component of the vector $\Tilde{f}(t,x)$ is \ $10x_{j}(1-x_{j})(1+x_{j}\sin(t x_{j})).$     We use this equation (\ref{varBurgDDE}) only to analyze the stability of the IMEX BDF2 and IMEX BDF3 methods applied to (\ref{varBurg}). \\

However, there are still a few things we need to check before solving this DDE, that is whether the matrices A and B satisfy the conditions required to use our theory.    Firstly, we should notice that system (\ref{systemPr4}) is nonlinear, and our theory has been developed for linear systems.  Hence, the Lagrange's mean value theorem in several dimensions should be applied in a similar way as it was done in \cite{MV-Pagano} before we can use the theory provided above taking $B=B(z(\xi))$ as the Jacobian of the nonlinear part in an intermediate point $\xi \in [t_{i-m},t_{i-m+1}]$. In order not to further burden the analysis of nonlinear stability and the procedure proposed to carry it out, we used the information we have only on  the solution at $(t_{0-m}; z^{0-m})$, i.e.,  we will reduce to analyze $F(A^{-1}B(z_{-m}))$.  However, the complete analysis of the stability of similar methods for nonlinear PDDEs would require a more detail study.  In private conversations, Prof. Shirokoff suggested using a modification of a Lyapunov's theory.  But, as far as we know, this idea has not been developed in detail.

We can quickly see that the matrix A is hermitian since its conjugate transpose is the matrix itself. So as to know if it is also definite negative, let us calculate its eigenvalues. Thanks to the fact that $A$ is an Hermitian matrix, we can use for this the equation (\ref{eigenvaluesT}), imposing as in the previous example $\alpha=\frac{-2\epsilon}{(\Delta x)^{2}}$, $\beta=\gamma=\frac{\epsilon}{(\Delta x)^{2}}$ and $N=n-1$, so the eigenvalues of $A$ will be
$$\lambda_{r}=-\frac{2\epsilon}{k^{2}}\left(1+\cos{\left(\frac{r\pi}{n}\right)}\right),\quad r=1,...,n-1.$$
Knowing that both $\epsilon$ and $n$ are positive numbers by definition, we see that $\lambda_{r}<0 \ \forall r$ as before. In this regard, we conclude that the matrix A is hermitian and negative definite.

Now let us check if $A$ and $B$ diagonalize simultaneously. We know that this happens if and only if they commute ($A$ has all its eigenvalues simple) and, for this specific matrices, this statement is not fulfilled, so $A$ and $B$ do not diagonalize simultaneously.

\vspace{0.2cm}
 Finally, let us consider the IMEX BDF2 and IMEX BDF3 methods (\ref{IMEX-BDF2 general})-(\ref{IMEX-BDF3 general}), which applied to this particular nonlinear system have the form
\begin{equation*}
\frac{3}{2}z_{\tilde{n}+1}-2z_{\tilde{n}}+\frac{1}{2}z_{\tilde{n}-1}=h\bigg(A z_{\tilde{n}+1}+ (2 g( z_{\tilde{n}-m} )- g( z_{\tilde{n}-1-m}) )+\Tilde{f}(t_{\Tilde{n}+1},x)\bigg),
\end{equation*}
\begin{equation*}
\frac{11}{6}z_{\tilde{n}+1}-3z_{\tilde{n}}+\frac{3}{2}z_{\tilde{n}-1}-\frac{1}{3}z_{\tilde{n}-2}=h\bigg(Az_{\tilde{n}+1}+(3 g(z_{\tilde{n}-m})-3 g(z_{\tilde{n}-1-m})+g(z_{\tilde{n}-2-m} ) )+\Tilde{f}(t_{\Tilde{n}+1},x)\bigg),
\end{equation*}
being
\[   g(u)= \left(  -\frac{ u_1 u_2}{2 \Delta x } , -\frac{ u_2 (u_3-u_1)}{2 \Delta x }, \ldots,  -\frac{ u_{n-2} (u_{n-1}-u_{n-3})}{2 \Delta x },  \frac{ u_{n-1} u_{n-2}}{2 \Delta x }    \right). \]

\vspace{\baselineskip}
With all of this, we have discretized our partial delay differential equation and also have proven that $A$ and $B$ do not diagonalize simultaneously, so the way to go now in order to solve this example is by using Theorem \ref{Th: stability of the autonomous system and h}. Let $p=0$, $\epsilon=1$, $n=100$, $\tau=1$ and $t_{e}=20$ and let us see where $F(A^{-1}B(z_{-m}))$ lays in the complex plane for the matrices $A$ and $B(z_{ -m})$ defined above:
\begin{figure}[h]
        \centering
        \begin{subfigure}[b]{0.30\textwidth}
            \centering
            \includegraphics[width=\textwidth]{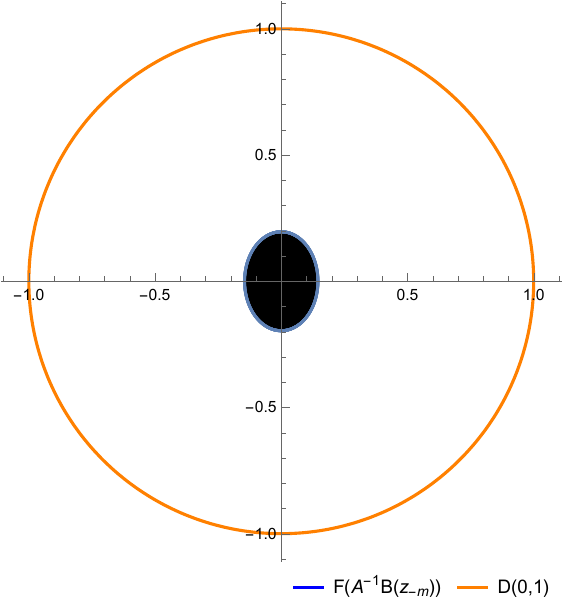}
             \caption{$F(A^{-1}B(z_{-m}))\subseteq D(0,1)$}
        \end{subfigure}
        \quad \quad
        \begin{subfigure}[b]{0.25\textwidth}
             \centering
             \includegraphics[width=\textwidth]{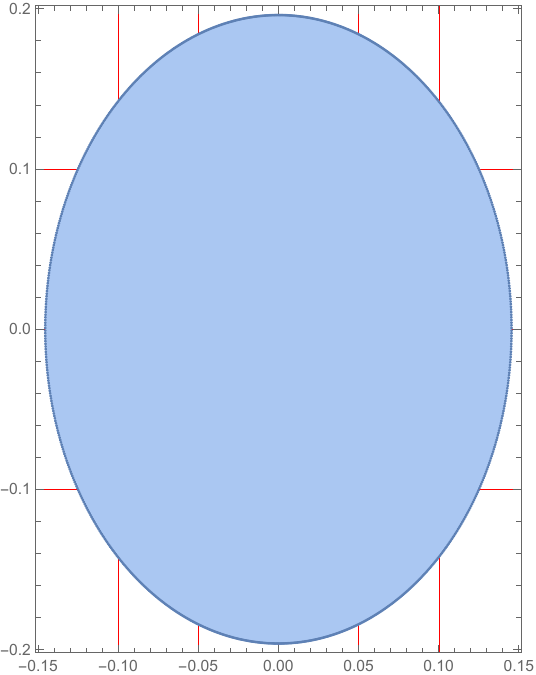}
             \caption{$F(A^{-1}B(z_{-m}))$}
             \label{F(AB)}
        \end{subfigure}
        \caption{Location of the numerical range $F(A^{-1}B(z_{-m}))$ for $n=100$.}
\end{figure}

\vspace{0.1cm}
It can be easily seen and calculated from Figure \ref{F(AB)} that $F(A^{-1}B(z_{-m}))\subseteq D(0,r)$ for $r=0.196$. Following the Theorem mentioned above, let us also consider the functions (\ref{chi-IMEX-BDF2}) and (\ref{chi-IMEX-BDF3}). We clearly see that $r$ is smaller than the minimum value of the first function, that is $r<\frac{1}{3}$, and therefore $r$ is inside the unconditionallly stability region. Thus, for any step size, the IMEX BDF2 method will be stable. For the second method, we can easily see that we are in the interval $\frac{1}{7}<r<0.218109$, so the step restriction will be given by the same inequality as in the previous example, leading for this specific values to $h^{*}=0.000448139$.

\vspace{\baselineskip}
Since the real solution of the problem is unkown, let us see in Figure \ref{fig:solVargBurg} the representation of the numerical solution of the problem obtained for the IMEX BDF2 and IMEX BDF3 methods, for a given step size. Note that, even for step sizes larger than $h^{*}$ in the IMEX BDF3, we get a good approximation to the real solution of the problem. This is due to the fact that both IMEX BDF2 and IMEX BDF3 have similar approximations and the method of second order mentioned is stable for any step size.
\begin{figure}[h]
    \centering
    \begin{subfigure}[b]{0.45\textwidth}
             \centering
             \includegraphics[width=\textwidth]{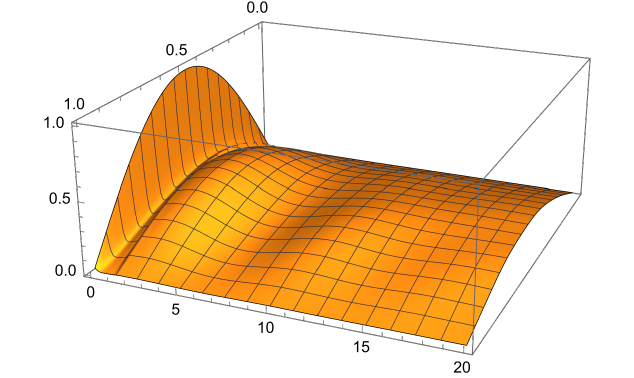}
             \caption{IMEX BDF2}
             \label{fig:solVargBurgIMEXBDF2}
    \end{subfigure}
    \quad \quad
    \begin{subfigure}[b]{0.40\textwidth}
             \centering
             \includegraphics[width=\textwidth]{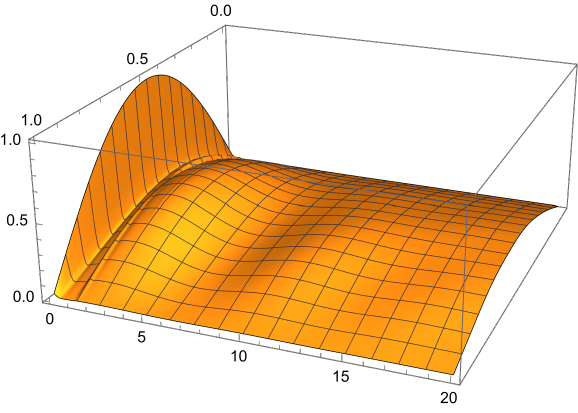}
             \caption{IMEX BDF3}
            \label{fig:solVargBurgIMEXBDF3}
    \end{subfigure}
    \caption{Numerical solution of (\ref{varBurg}) given by each method with $h=0.1$ and $t_{e}=20$.}\label{Test1}
    \label{fig:solVargBurg}
\end{figure}

\end{sol}

\section{Conclusions}\label{Conclusions}

 In this paper, the stability of IMEX-BDF methods for DDEs is studied in detail for the test equation $y'(t)=-A y(t) + B y(t-\tau)$, where $\tau$ is a constant delay, $A$ is a positive definite matrix, but $B$ might be any matrix.  Normally, the stability of numerical methods applied to DDEs is restricted to the scalar case, and therefore it allows the analysis to be extended to the case in which $A$ and $B$ diagonalize simultaneously. This work allows a step forward by considering the analysis in a much broader case, obtaining conditions both for obtaining unconditional stability, and for conditional stability depending on the step length.

 The numerical experiments, conducted on DDEs, and PDDEs of interest in applications, have highlighted the correctness of the theoretical analysis, and the good stability properties of the IMEX-BDF methods applied to DDEs.

 Furthermore, it is possible to use similar procedures to analyze the stability in similar multistep IMEX methods, and even of higher order. In this sense, we consider that this article can motivate future works.  Another topic that could be worthy of study in the future is the study of more precise sufficient conditions to have conditional stability when the eigenvalues of $A$ are very different.

\section*{Acknowledgments}

This research was funded by the Spanish Ministerio de Ciencia e Innovación (MCIN) with funding from the grant PID2023-148409NB-I00 MTM, Spanish Ministerio de Ciencia e Innovación (MCIN) with funding from the European Union NextGenerationEU (PRTRC17.I1); Consejer\'ia de Educación, Junta de Castilla y Le\'on, through QCAYLE project.  	
	The authors thank Prof. D. Shirokoff (co-author of \cite{LMM-Seib-Ros,LMM-Seib}), Alejandro Rodr\'iguez and Giovanni Pagano for fruitful conversations.

\bibliographystyle{abbrv}
\bibliography{refs}

\end{document}